\numberwithin{equation}{section}
\newtheorem{theorem}[equation]{Theorem}
\newtheorem*{thm}{Theorem}
\newtheorem{proposition}[equation]{Proposition}
\newtheorem{lemma}[equation]{Lemma}
\newtheorem{corollary}[equation]{Corollary}
\newtheorem{assumption}[equation]{Assumption}
\theoremstyle{definition}
\newtheorem{rmk}[equation]{Remark}
\newenvironment{remark}[1][]{\begin{rmk}[#1] \pushQED{\qed}}{\popQED \end{rmk}}
\newtheorem{eg}[equation]{Example}
\newtheorem{defn}[equation]{Definition}
\newenvironment{definition}[1][]{\begin{defn}[#1]\pushQED{\qed}}{\popQED \end{defn}}
\newtheorem{ques}[equation]{Question}
\newenvironment{question}[1][]{\begin{ques}[#1]\pushQED{\qed}}{\popQED \end{ques}}
\newcommand{\cB}{\mathcal{B}}
\newcommand{\cN}{\mathcal{N}}
\newcommand{\cQ}{\mathcal{Q}}
\newcommand{\cS}{\mathcal{S}}
\newcommand{\cT}{\mathcal{T}}
\newcommand{\fb}{\mathfrak{b}}
\newcommand{\ze}{\ensuremath{\epsilon}}
\newcommand{\zl}{\ensuremath{\lambda}}
\newcommand{\ZZ}{\mathbb{Z}}
\newcommand{\sqbinom}[2]{\genfrac{[}{]}{0pt}{}{#1}{#2}}
\newcommand{\setst}[2]{\left\{ #1 \mid #2 \right\}}
\newcommand{\bimod}{\mathsf{bimod}}
\newcommand{\comment}[1]{}
\newcommand{\Hom}{\operatorname{Hom}}
\DeclareMathOperator{\rad}{rad} 
\DeclareMathOperator{\End}{End}
\DeclareMathOperator{\nrep}{nlrep}
\newcommand{\xto}[1]{\xrightarrow{#1}}
\newcommand{\kk}{\Bbbk}
\newcommand{\uqb}{U_q(\mathfrak{b})}
\newcommand{\Uqsl}{U_q(\mathfrak{sl}_2)}
\newcommand{\uqsl}{u_q(\mathfrak{sl}_2)}
\newcommand{\rep}{\mathsf{rep}}
\DeclareMathOperator{\lcm}{lcm}
\renewcommand{\mod}{\operatorname{mod}}
\newtheorem{notation}[equation]{Notation}
\begin{document}
\title{Hopf actions of some quantum groups on path algebras}

\author{Ryan Kinser}
\address{University of Iowa, Department of Mathematics, Iowa City, USA}
\email[Ryan Kinser]{ryan-kinser@uiowa.edu}

\author{Amrei Oswald}
\address{University of Iowa, Department of Mathematics, Iowa City, USA}
\email[Amrei Oswald]{amrei-oswald@uiowa.edu}

\begin{abstract}
Our first collection of results parametrize (filtered) actions of a quantum Borel $\uqb \subset \Uqsl$ on the path algebra of an arbitrary (finite) quiver.
When $q$ is a root of unity, we give necessary and sufficient conditions for these actions to factor through corresponding finite-dimensional quotients, generalized Taft algebras $T(r,n)$ and small quantum groups $\uqsl$.

In the second part of the paper, we shift to the language of tensor categories. 
Here we consider a quiver path algebra equipped with an action of a Hopf algebra $H$ to be a tensor algebra in the tensor category of representations $H$.
Such a tensor algebra is generated by an algebra and bimodule in this tensor category.
Our second collection of results describe the corresponding bimodule categories via an equivalence with categories of representations of certain explicitly described quivers with relations.
\end{abstract}

\makeatletter
\@namedef{subjclassname@2020}{%
  \textup{2020} Mathematics Subject Classification}
\makeatother

\subjclass[2020]{Primary 16T05; Secondary 16G20, 18M99}

\keywords{Hopf action, Taft algebra, quantum group, path algebra, quiver, tensor category}

\maketitle

\setcounter{tocdepth}{1}
\tableofcontents

\section{Introduction}
\subsection{Context and motivation}
Throughout the paper we work over a field $\kk$ which is often omitted from the notation.  The only assumptions on the field are that it must contain the necessary roots of unity whenever they are referenced.

The classical mathematical notion of ``symmetry'' can be formalized by group actions.  A group can act directly on an object itself, or indirectly on various spaces of functions on an object.  
An advantage of the latter approach is that spaces of $\kk$-valued functions on any object (the functions may be restricted to satisfy some reasonable property) are $\kk$-vector spaces, allowing for the introduction of tools from linear algebra and representation theory in the study of symmetry.
This naturally leads from groups to the study of group algebras and their actions on other $\kk$-algebras.

More generally, actions of Hopf algebras on $\kk$-algebras are one way to formalize the mathematical notion of \emph{quantum symmetry}.
One may see the following (highly incomplete) list of recent works as entry points to the extensive literature on the topic
\cite{CEW16,CKWZ16,EW16,EW17,CKWZ1,CKWZ2,Cline19,BHZ19,Negron19,CG20,EKW20,DNN20,LNY20,CY20}, and the survey articles \cite{Kirkman16,Walton2019}.
In this paper we study actions of several families of Hopf algebras:
\begin{equation}\label{eq:algebras}
\uqb, \quad \Uqsl, \quad T(n,r), \quad \uqsl.
\end{equation}
on path algebras of quivers. 
Here, the first two are rank 1 quantum groups (deformed enveloping algebras), and the latter two are certain finite-dimensional quotients of these; see Section \ref{sec:algebradefs} for precise definitions.
Our work in this paper significantly extends the main results of \cite{KW16} (and a subsequent partial generalization in one chapter of \cite{Berrizbeitia18}), while simultaneously connecting with the tensor category framework for studying Hopf actions on quiver path algebras introduced in \cite{EKW20}.

\subsection{Summary of main results}
Below, $H$ always refers to one of the algebras in \eqref{eq:algebras}.
Our main results describe all filtered actions (see Assumption \ref{assume}) of such $H$ on path algebras of arbitrary quivers with finitely many vertices and arrows, in two different ways.

In Section \ref{sec:parametrize}, we parametrize these actions by basic linear algebraic data.  We paraphrase Theorems \ref{thm:uqbonkQ} and \ref{thm:UqslonkQ} as follows, where the ``certain conditions'' mentioned below are explicitly listed in these theorems.
\begin{thm}
The following data determines a (filtered) Hopf action of $\uqb$ (resp., $\Uqsl$) on a path algebra $\kk Q$, and all such actions are of this form:
\begin{enumerate}[(i)]
    \item a Hopf action of $\uqb$ (resp., $\Uqsl$) on $\kk Q_0$, as described in Proposition \ref{prop:uqbonvertices} (resp. \ref{prop:uqslonvertices});
    \item a representation of $G$ on $\kk Q_1$ which is compatible with its $\kk Q_0$-bimodule structure;
    \item a linear endomorphism (resp. pair of linear endomorphisms) of $\kk Q_0 \oplus \kk Q_1$ 
    satisfying certain conditions.
\end{enumerate}
\end{thm}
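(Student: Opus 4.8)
The plan is to exploit the presentation of $\kk Q$ as the tensor algebra $T_S(M)$ over the separable subalgebra $S := \kk Q_0$ on the bimodule $M := \kk Q_1$, equipped with the path-length grading and associated filtration $\mathcal{F}_m := \bigoplus_{d \le m} \kk Q_d$. By Assumption \ref{assume} a filtered action preserves each $\mathcal{F}_m$, hence in particular $\mathcal{F}_0 = S$ and $\mathcal{F}_1 = S \oplus M$. The grouplike generator $K$ must act as a filtered algebra automorphism, and each skew-primitive generator ($E$, and also $F$ in the $\Uqsl$ case) acts as a twisted derivation governed by its coproduct, e.g.\ $E \cdot (xy) = (E \cdot x)(K \cdot y) + x\,(E \cdot y)$ when $\Delta(E) = E \otimes K + 1 \otimes E$. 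The observation underlying both directions is that on a tensor algebra an automorphism or a twisted derivation is \emph{uniquely determined by its restriction to the generating space} $S \oplus M = \mathcal{F}_1$: the former extends multiplicatively, the latter by iterating the Leibniz rule. All the defining data of a filtered action therefore lives on $\mathcal{F}_1$, which is exactly what items (i)--(iii) encode.

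For the direction that the data determines an action, I would first build operators on generators from (i)--(iii): the automorphism $K$ from the $G$-representations on $S$ (part of (i)) and on $M$ (item (ii)), and the twisted derivation $E$ from its values $E \cdot s \in S$ for $s \in S$ (part of (i)) together with the genuinely new datum $E \cdot m \in S \oplus M$ for $m \in M$ recorded by the endomorphism in (iii). I would then extend these to all of $\kk Q$ by the universal property of $T_S(M)$, multiplicatively for $K$ and via the Leibniz rule for $E$ (and $F$). The substantive point is to verify that the defining relations of $\uqb$ (resp.\ $\Uqsl$) --- $K E K^{-1} = q^{2} E$, and additionally $K F K^{-1} = q^{-2} F$ and $[E,F] = (K - K^{-1})/(q - q^{-1})$ in the $\mathfrak{sl}_2$ case --- hold as operators on $\kk Q$. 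Here the uniqueness observation does the work: each side of every relation is again an automorphism or twisted derivation of the same type, so it suffices to check the relation on the generating subspace $\mathcal{F}_1$, and this is precisely what the ``certain conditions'' in (iii) are engineered to guarantee. Compatibility with multiplication (the module-algebra axioms) is automatic, since the extension was defined through the Leibniz rule.

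For the converse, that every filtered action arises this way, I would restrict a given action to the filtration pieces. Preservation of $\mathcal{F}_0 = S$ yields a Hopf action on $\kk Q_0$, hence item (i) via Proposition \ref{prop:uqbonvertices} (resp.\ \ref{prop:uqslonvertices}). The grouplike $K$ preserves the grading, so its action on $\mathcal{F}_1/\mathcal{F}_0 \cong \kk Q_1$ gives the $G$-representation compatible with the $S$-bimodule structure of item (ii). Restricting the twisted derivation $E$ (and $F$) to $\mathcal{F}_1 = S \oplus M$ produces the endomorphism(s) of item (iii), with image inside $\mathcal{F}_1$ by the filtered property and with the relation and Leibniz constraints descending to the stated conditions. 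These two assignments are mutually inverse by the uniqueness observation, which is the desired parametrization.

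The main obstacle will be the $\Uqsl$ case, and specifically checking the commutator relation $[E,F] = (K - K^{-1})/(q - q^{-1})$ on $\mathcal{F}_1$. Because a filtered action need not be graded, both $E$ and $F$ may carry degree-lowering components --- an arrow can be sent to a combination of vertices --- so the composites $EF$ and $FE$ acquire cross terms mixing the degree-preserving and degree-lowering parts. Isolating these contributions on $M$ and matching them against the grouplike operator $(K - K^{-1})/(q - q^{-1})$, which is diagonal on each homogeneous piece, is the most delicate bookkeeping in the argument and is where the precise form of the conditions in (iii) must be pinned down.
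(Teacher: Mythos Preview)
Your approach is essentially the same as the paper's, just framed more abstractly: the paper also reduces everything to $\mathcal{F}_1 = S \oplus M$ (its Lemma \ref{lem:gactionvertices} plays the role of your uniqueness observation), then verifies the axioms on the generating relations $e_{sa}a = a = a e_{ta}$ of $\kk Q$ by direct computation, and for $\Uqsl$ computes $(EF-FE)\cdot a$ explicitly to extract condition $(\sigma 4)$.

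One point to tighten. You write that the module-algebra axioms are ``automatic, since the extension was defined through the Leibniz rule,'' and that the conditions in (iii) are there solely to enforce the Hopf-algebra relations. That is not quite right: extending a twisted derivation from $S$ and $M$ to $T_S(M)$ via the Leibniz rule is only \emph{well defined} if the map on $M$ is already a twisted derivation with respect to the $S$-bimodule structure, i.e.\ $E\cdot(e_i a) = e_i(E\cdot a) + (E\cdot e_i)(K\cdot a)$ and the analogous right-hand identity. In the paper this is condition $(\sigma 2)$, namely $\sigma(a) = e_{sa}\,\sigma(a)\,e_{g\cdot ta}$, and checking it is exactly the content of the computations around \eqref{eq:xarelation1}--\eqref{eq:sigmaformula2}. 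So part of (iii) is devoted to well-definedness of the Leibniz extension, not to the relation $gx = qxg$; only $(\sigma 3)$ handles the latter. Once you account for this, your outline matches the paper's proof.
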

The linear endomorphism(s) mentioned in \textit{(iii)} are derived from the actions of the standard skew-primitive generators of $\uqb$ and $\Uqsl$,
but are in some sense independent of Hopf action on $\kk Q_0$ in \textit{(i)}.
Corollaries \ref{cor:taftquivers} and \ref{cor:uqslonkQ2} give easily verifiable criteria for an action above to factor through the finite dimensional quotients $T(r,n)$ and $\uqsl$.

In Section \ref{sec:bimodules}, we shift perspective following recent joint work of the first author with Etingof and Walton \cite{EKW20},
viewing a path algebra with an action of $H$ as particular case of a tensor algebra in the tensor category $\rep(H)$.
This leads to an analysis of bimodule categories over commutative algebras in $\rep(H)$ for each Hopf algebra $H$ in \eqref{eq:algebras}, leveraging our work in Section \ref{sec:parametrize}.
Our main results here give describe the structure of these bimodule categories.
We provide a brief outline below; see Theorems \ref{thm:uqbequivalence} and \ref{thm:uqslequivalence} and their corollaries for the precise statements.

\begin{thm} 
Consider the tensor category $\mathcal{C}=\rep(H)$ 
and a pair of indecomposable algebras $S,\, S'$ in $\mathcal{C}$ such that $S=\kk^m$ and $S'=\kk^{m'}$ as $\kk$-algebras.
Then the bimodule category $\bimod_{\mathcal{C}}(S, S')$ is equivalent to the following category of representations in each case:
\begin{enumerate}[(a)]
\item for $H=\uqb$, representations of $\Gamma(q^\ell,d)$ which are nilpotent on loops (see Notation \ref{not:uqb})
\item for $H=T(r,n)$, representations of $\Gamma_T$ (see Notation \ref{not:T(r,n)})
\item for $H=\Uqsl$, representations of $\Gamma'(q^{2\ell},q^2,d)$ which are nilpotent on loops (see Notation \ref{not:Uqsl2})
\item for $H=\uqsl$, representations of $\Gamma'_T$ (see Notation \ref{not:uqsl2}).
\end{enumerate}
Each algebra above is presented as a quotient of a path algebra (via explicitly given relations) of a quiver whose connected components are of the form shown in \eqref{eq:Sinfinity}, \eqref{eq:Sr}, \eqref{eq:S'3}, and \eqref{eq:T'}.
\end{thm}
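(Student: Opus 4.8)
The plan is to prove the four cases by first establishing the two generic statements (a) and (c) and then deducing the finite-dimensional cases (b) and (d) by imposing the factoring criteria of Corollaries \ref{cor:taftquivers} and \ref{cor:uqslonkQ2}. I would begin by unwinding the definition of an object of $\bimod_{\mathcal{C}}(S,S')$: such an object is an $H$-module $M$ equipped with commuting left $S$- and right $S'$-actions that are morphisms in $\mathcal{C} = \rep(H)$. Because $S \cong \kk^m$ and $S' \cong \kk^{m'}$ as $\kk$-algebras, their complete systems of orthogonal idempotents $\{e_i\}$ and $\{e'_j\}$ decompose $M$ as a $\kk$-vector space into $M = \bigoplus_{i,j} e_i M e'_j$. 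The content of $S,S'$ being algebras in $\mathcal{C}$ rather than merely in $\mathrm{Vec}$ is recorded by the classification of Hopf actions on $\kk Q_0$ from Propositions \ref{prop:uqbonvertices} and \ref{prop:uqslonvertices}; this is precisely the vertex data from Section \ref{sec:parametrize}, which I would use to pin down how the grouplike generator $K$ acts on each idempotent. The hypothesis that $S$ and $S'$ are indecomposable ensures that these idempotents form a single orbit-type under the relevant grouplike action, which is ultimately responsible for the connected-component shapes listed in the statement.

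Next I would construct the comparison functor to quiver representations. The grouplike $K$ acts semisimply on $M$, refining the idempotent decomposition into a weight decomposition; the pairs (idempotent block, weight) furnish the vertices of $\Gamma$. The skew-primitive generator $E$ (and additionally $F$ in the $\Uqsl$/$\uqsl$ cases) acts as a weight-shifting operator, and the $q$-commutation relation between $K$ and $E$ forces each component of this action to map one vertex space to another in a prescribed way; these maps, together with the structure maps coming from the $S$- and $S'$-actions, are exactly the arrows of $\Gamma$. This is where I would invoke Theorems \ref{thm:uqbonkQ} and \ref{thm:UqslonkQ}: a bimodule is the degree-one (arrow) part of a filtered path-algebra action (Assumption \ref{assume}), so those theorems translate the remaining structure of $M$ into linear-algebraic data matching a representation of $\Gamma$. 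The defining relations of $\Gamma(q^\ell,d)$ and $\Gamma'(q^{2\ell},q^2,d)$ are then read off from the algebra relations of $H$, while the ``nilpotent on loops'' condition encodes the requirement that $E$ act locally nilpotently---the loops arise precisely when a weight is fixed by the $q^\ell$-shift, which at a root of unity produces the periodicity visible in the shapes \eqref{eq:Sinfinity} and \eqref{eq:S'3}.

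To upgrade this comparison into an equivalence I would exhibit an explicit quasi-inverse: given a representation of $\Gamma$ satisfying the relations, reassemble a vector space graded by the vertices, define the $K$-action by the weights, define the $E$- (and $F$-) action from the arrows, and recover the $S$- and $S'$-actions from the idempotent grading. One then checks that the imposed relations are exactly what is needed for these operators to satisfy the defining relations of $H$ and to commute with the $S,S'$-actions, so that the result is a genuine object of $\bimod_{\mathcal{C}}(S,S')$; naturality on morphisms is straightforward, since morphisms on both sides are $\kk$-linear maps respecting the same gradings and intertwining the same operators. The main obstacle I anticipate is the bookkeeping in this last verification: showing that the explicitly listed relations are simultaneously \emph{necessary} (automatically satisfied by every bimodule, including the subtle nilpotency at roots of unity) and \emph{sufficient} (any relation-respecting representation genuinely integrates to an $H$-action), since this is where the root-of-unity parameter $\ell$, the twisting parameter $d$, and the eigenvalue $q^2$ enter and must be reconciled case by case with the combinatorics of \eqref{eq:Sinfinity}--\eqref{eq:T'}.

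Finally, for the finite-dimensional cases (b) and (d) I would observe that $T(r,n)$ and $\uqsl$ are quotients of $\uqb$ and $\Uqsl$, so $\rep(T(r,n))$ and $\rep(\uqsl)$ are the full subcategories of $\rep(\uqb)$ and $\rep(\Uqsl)$ on which the extra defining relations vanish. Under the equivalence already established, these vanishing conditions cut out exactly the representations whose underlying quiver is the truncated $\Gamma_T$ (resp.\ $\Gamma'_T$) with shapes \eqref{eq:Sr} and \eqref{eq:T'}, via the factoring criteria of Corollaries \ref{cor:taftquivers} and \ref{cor:uqslonkQ2}; in these truncations the nilpotency is forced by the relations, which is why no separate loop-nilpotency hypothesis is needed. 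Restricting the equivalences of (a) and (c) to these subcategories then yields (b) and (d).
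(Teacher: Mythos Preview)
Your overall architecture (build a comparison functor from bimodule data, construct an inverse, then restrict to the finite-dimensional quotients) matches the paper's, but there is a genuine conceptual gap in how you interpret the quiver $\cQ(q^\ell,d)$, and it would derail the argument.

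You assert that the grouplike acts semisimply and that the ``nilpotent on loops'' condition encodes local nilpotence of the skew-primitive $E$. Both are incorrect for $H=\uqb$. Since $g$ has infinite order, there is no reason for it to be diagonalizable on a finite-dimensional module, and indeed the paper only decomposes each arrow space $V^i_j$ into \emph{generalized} eigenspaces $V^i_j(\lambda)$ of $g^\ell$. The loops $b_{\lambda,k}$ at every vertex of $\cQ$ are exactly $B_{\lambda,k}=(g^\ell-\lambda\,1)|_{W_{\lambda,k}}$, the nilpotent part of the grouplike action; their nilpotency is automatic from the generalized-eigenspace decomposition and has nothing to do with $E$. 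The $a$-type arrows, by contrast, come from the map $\sigma$ of Theorem~\ref{thm:uqbonkQ} (the essential part of the $x$-action), and these are \emph{not} loops. In the Taft and $\uqsl$ cases the loops disappear not because ``nilpotency is forced by the relations'' but because $g^n=1$ makes $g^\ell$ diagonalizable, so all $B_{\lambda,k}=0$ and the $b$-arrows are simply dropped from $\cT$ and $\cT'$. Finally, the $S$- and $S'$-actions contribute no arrows; they only give the idempotent grading.

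The second missing ingredient is the actual mechanism of the equivalence. The paper does not take all idempotent blocks as vertex spaces; it selects only the pieces $V^0_{\tau(\lambda,k)}(\lambda)$ with source index $0$, via a carefully engineered section $\tau\colon \cQ_0\to\ZZ$ (with a correction factor $\ze$ at roots of unity). The quasi-inverse is then an honest induction $\kk G\otimes_{\kk\langle g^\ell\rangle}\tilde W$, and the proof that $\Phi$ and $\Psi$ are mutually inverse hinges on the arithmetic of $\tau$ relative to $m,m',\ell,d$. Without this ``unraveling'' device your proposed inverse would not land in the correct category, and the bookkeeping you flag as the main obstacle is in fact the substance of the proof.
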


One immediate consequence of these equivalences is that the bimodule category in each case is independent of the specific $H$-actions on $S, S'$, since the corresponding algebras are (Corollary \ref{cor:independent}).
Another consequence is that classifying $H$-actions even on very small quivers will usually be 
``at least as complicated as'' classification of representations of the free associative $\kk$-algebra $\kk\langle x,y\rangle$, or equivalently, pairs of matrices up to simultaneous conjugation.
This is because the algebras appearing in our theorems of Section \ref{sec:bimodules}
typically have finite-dimensional algebras of wild representation type as quotients.
It seems to us that classification up to Morita equivalence in $\rep(H)$ should also be difficult, but we do not see how to make that precise.
A notable exception is when $H=T(n,r)$ is a generalized Taft algebra, in which case our results show that the relevant bimodule categories can actually be of finite representation type, as they can be equivalent to representation categories of Nakayama algebras.

\subsection*{Acknowledgements}
The authors thank Chelsea Walton for valuable feedback on the first draft of this manuscript.

\section{Background and preliminaries}
\subsection{Hopf algebras and their actions}\label{sec:Hopfactions}
We refer the reader to standard texts such as \cite{Montgomery,DNR,Radford} for background on Hopf algebras and Hopf actions, and we use standard notation such as $\Delta$ for the comultiplication and $\varepsilon$ for the counit.
The term \emph{algebra} in this paper always refers to an associative $\kk$-algebra with identity.

Recall that given a Hopf algebra $H$ and an algebra $A$,  a \emph{(left Hopf) action of $H$ on $A$} consists of a left $H$-module structure on $A$ satisfying:
\begin{enumerate}[(a)]
\item $ h \cdot (pq) = \sum_i (h_{i,1} \cdot p)(h_{i,2} \cdot q)$ for all $h\in H$ and $p, q \in A$, where $\Delta(h)=\sum_i h_{i,1}\otimes h_{i,2}$, and
\item $h \cdot 1_A = \varepsilon(h)1_A$ for all $h \in H$.
\end{enumerate}
In this case we also say that $A$ is a \emph{left $H$-module algebra}.
This is equivalent to $A$ being an algebra in the tensor category $\rep(H)$, the category of finite-dimensional representations of $H$.

\subsection{Generalized Taft algebras and related quantum groups}\label{sec:algebradefs}
In this section we define the specific Hopf algebras whose actions are studied in this paper.  We omit mention of the counits since the conditions they impose throughout the paper are always trivial to check.
The symbol $q$ always denotes an element of $\kk$, with further restrictions depending on the algebra. 

\begin{definition}\label{def:taft}
Let $q \in \kk$ with $q \neq 0, \pm 1$. Following \cite[Ex.~1.5]{majidbook}, the Hopf algebra $\uqb$ has $\kk$-algebra generators $x, g, g^{-1}$
and relations
\begin{equation}
gg^{-1}=1=g^{-1}g, \quad gxg^{-1}=q x .
\end{equation}
The comultiplication is given by
\begin{equation}
\Delta(g)=g\otimes g, \quad \Delta(x)=1\otimes x + x \otimes g.
\end{equation}

If $q$ is a primitive $r^{\rm th}$ root of unity and $n$ a positive integer multiple of $r$, then the \emph{generalized Taft algebra} \cite{Radford75} is the Hopf quotient $T(r,n):=\uqb/\langle g^n-1,\, x^r\rangle$ (see also \cite{cibils93}).  We write $T(n):=T(n,n)$ for short; these are the classical \emph{Taft algebras} \cite{Taft71}.
\end{definition}

The Hopf algebra $\uqb$ can be viewed as a deformation of the universal enveloping algebra of $\mathfrak{b}$. We now define a deformation of the universal enveloping algebra of $\mathfrak{sl}_2$, and a certain finite-dimensional Hopf quotient.  

\begin{definition}
\label{def:uqsl2}
Following  \cite[\S7]{majidbook}, the algebra $\Uqsl$ is the $\kk$-algebra with generators $E,F, K,K^{-1}$ subject to relations
\begin{equation}\label{eq:uqslrelations}
KEK^{-1}=q^2 E,\quad KFK^{-1}=q^{-2}F, \quad [E,F]=\frac{K-K^{-1}}{q-q^{-1}}
\end{equation}
along with $KK^{-1}=1=K^{-1}K$.
The comultiplication is given by
\begin{equation}
\Delta(E)=1 \otimes E+E\otimes K, \quad \Delta(F)=K^{-1}\otimes F+F\otimes 1, \quad \Delta(K)=K \otimes K.
\end{equation}

When $q$ is a primitive $n^{\rm th}$ root of unity with $n>2$ and $n$ odd, we also have the \emph{small quantum group} $u_q(\mathfrak{sl}_2)$, or \emph{Frobenius-Lusztig kernel}, as the Hopf quotient $U_q(\mathfrak{sl}_2)/\langle K^n-1,\ E^n, F^n \rangle$ \cite[Ex.~6.4]{majidbook}.
\end{definition}

The algebras $\uqb$ and $T(n)$ are analogues of Borel subalgebras in $U_q(\mathfrak{sl}_2)$ and $u_q(\mathfrak{sl}_2)$, respectively.  
Namely, we have isomorphisms of Hopf algebras
\begin{equation}\label{eq:borels}
\begin{split}
\langle E, K \rangle \simeq U_{q^2}(\fb)\qquad  &\text{ where }K \leftrightarrow g, \quad E \leftrightarrow x\\
\langle F, K \rangle \simeq U_{q^{-2}}(\fb)\qquad &\text{ where }K \leftrightarrow g, \quad F \leftrightarrow g^{-1}x,
\end{split}
\end{equation}
and similarly $\uqsl$ has two subalgebras isomorphic to Taft algebras.

\subsection{Skew-primitives and $q$-integers}
Following \cite[\S7]{majidbook}, for $q \neq 0,\pm 1$ we define the quantum integers and binomial coefficients for integers $0 < m < n$ as
\begin{equation}
[m]_q = \frac{1-q^m}{1-q}, \qquad \sqbinom{n}{m}_{q} = \frac{[n]_q!}{[m]_q![n-m]_q!},
\end{equation}
where $[k]_q! = [k]_q [k-1]_q \cdots [2]_q [1]_q$ for $k \in \ZZ_{>0}$.  
We observe that for $0 < m < n$ we have
\begin{equation}\label{eq:binomvanish}
q^n=1 \quad \Longrightarrow \quad \sqbinom{n}{m}_{q} = 0.
\end{equation}
By convention we 
interpret $\sqbinom{n}{m}_{q} =1$ when $m=0$ or $n$.

Let $k, l$ be positive integers.  The set of \emph{weak compositions} of $k$ of length $l$ is
\begin{equation}
WC(k,l)=\left\{\lambda=(\lambda_1,\lambda_2,\ldots,\lambda_l)\in \ZZ_{\geq 0}^l \bigg\vert \sum\limits_{i=1}^l \lambda_i = k \right\},
\end{equation}
and for $\lambda \in WC(k,l)$, we write $|\lambda|=k$.  We denote by $e_i \in WC(1,l)$ the standard basis vector which is 1 in coordinate $i$ and 0 elsewhere.  For $\lambda \in WC(k,l)$, the $q$-multinomial coefficient is defined as
\begin{equation}
\sqbinom{k}{\lambda}_{q} = \frac{[k]_q!}{[\lambda_1]_q! \cdots [\lambda_l]_q!}.
\end{equation}
Partial sums of the entries of $\lambda \in WC(k,l)$ frequently appear in our formulas, so we define 
\begin{equation}
\lambda^i = \lambda_1 + \lambda_2 + \cdots + \lambda_{i-1}.
\end{equation}
(and take $\lambda^1 = 0$).
These satisfy the following recursion, which is a $q$-multinomial analogue of Pascal's formula for binomial coefficients:
\begin{equation}\label{eq:qmultrec}
\sqbinom{k}{\lambda}_q= \sum\limits_{i=1}^{k} q^{\lambda^i} \sqbinom{k-1}{\lambda-e_i}_q \qquad \text{for } \lambda \in WC(k,l).
\end{equation}

\begin{notation}
Let $G,X \in H$ be two elements of an algebra $H$ and $\lambda \in WC(k,l+1)$.  We define the element $G\otimes^{\lambda} X$ of $H^{\otimes l+1}$ by
\begin{equation}\label{eq:GlX}
\begin{split}
G\otimes^{\lambda} X
&:=X^{\lambda_1}\otimes G^{\lambda_1} X^{\lambda_2}\otimes G^{\lambda_1+\lambda_2}X^{\lambda_3} \otimes \cdots \otimes G^{\lambda_1+\cdots+\lambda_l}X^{\lambda_{l+1}}\\
&= G^{\lambda^1} X^{\lambda_1}\otimes G^{\lambda^2} X^{\lambda_2}\otimes G^{\lambda^3}X^{\lambda_3} \otimes \cdots \otimes G^{\lambda^{l+1}}X^{\lambda_{l+1}}.
\end{split}
\end{equation}
\end{notation}
For example, we have for each $1 \leq i \leq l$ and standard basis vector $e_i$ the element
\begin{equation}\label{eq:GeiX}
G \otimes^{e_i} X:= 1\otimes \cdots \otimes 1\otimes X \otimes G \otimes \cdots \otimes G
\end{equation}
where $X$ appears in factor $i$.

\begin{lemma}\label{lem:GXeilambda}
Let $G,X$ be elements of an algebra such that $GX=qXG$ for some $q \in \kk^\times$, and let $\lambda \in WC(k,l+1)$.  Then we have
\[
(G \otimes^{e_i} X)(G\otimes^\lambda X)=q^{-\lambda^i}\ G\otimes^{e_i+\lambda} X.
\]
\end{lemma}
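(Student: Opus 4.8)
The plan is to compute the product directly and componentwise in the algebra $H^{\otimes(l+1)}$, where multiplication is carried out factor by factor. Setting $\mu = e_i + \lambda \in WC(k+1,l+1)$, it suffices to show that the $j$-th tensor factor of $(G\otimes^{e_i}X)(G\otimes^\lambda X)$ agrees with the $j$-th tensor factor of $G\otimes^\mu X$ up to an overall scalar, which I will track and verify equals $q^{-\lambda^i}$ and is contributed by a single factor.

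The only algebraic input beyond index bookkeeping is the following consequence of $GX = qXG$: rewriting it as $XG = q^{-1}GX$ and inducting gives $X G^m = q^{-m} G^m X$ for all $m\ge 0$.

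Using \eqref{eq:GeiX}, the $j$-th factor of $G\otimes^{e_i}X$ is $1$ when $j<i$, equals $X$ when $j=i$, and equals $G$ when $j>i$, while by \eqref{eq:GlX} the $j$-th factor of $G\otimes^\lambda X$ is $G^{\lambda^j}X^{\lambda_j}$. Multiplying these factor by factor yields $G^{\lambda^j}X^{\lambda_j}$ for $j<i$; the expression $X\,G^{\lambda^i}X^{\lambda_i} = q^{-\lambda^i}G^{\lambda^i}X^{\lambda_i+1}$ for $j=i$, where the scalar is produced by the commutation $XG^{\lambda^i}=q^{-\lambda^i}G^{\lambda^i}X$; and $G\,G^{\lambda^j}X^{\lambda_j} = G^{\lambda^j+1}X^{\lambda_j}$ for $j>i$.

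To finish, I would compare with the factors of $G\otimes^\mu X$. Since $\mu_j=\lambda_j+\delta_{ij}$, the partial sums satisfy $\mu^j=\lambda^j$ for $j\le i$ and $\mu^j=\lambda^j+1$ for $j>i$; substituting these into $G^{\mu^j}X^{\mu_j}$ reproduces exactly the three expressions just computed. Thus every factor matches, with the single scalar $q^{-\lambda^i}$ arising only in slot $i$, which gives the claimed identity. The main obstacle is entirely the index bookkeeping: correctly evaluating the partial sums $\mu^j$ of $e_i+\lambda$ and confirming that the commutation scalar is generated in the $i$-th tensor slot alone, so that the global prefactor is precisely $q^{-\lambda^i}$ rather than a product over several slots.
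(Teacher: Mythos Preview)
Your proof is correct and follows essentially the same approach as the paper: both arguments multiply factor by factor in $H^{\otimes(l+1)}$, apply the commutation $XG^{\lambda^i}=q^{-\lambda^i}G^{\lambda^i}X$ in the $i$-th slot, and identify the result with $G\otimes^{e_i+\lambda}X$. Your version is slightly more explicit in separately verifying the partial sums $\mu^j$ for $j\le i$ and $j>i$, but the substance is identical.
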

\begin{proof} Recalling the notation in \eqref{eq:GlX} and \eqref{eq:GeiX}, we have
\begin{equation}
(G \otimes^{e_i} X)(G\otimes^\lambda X) 
=G^{\lambda^1}X^{\lambda_1}\otimes G^{\lambda^2} X^{\lambda_2}\otimes \cdots \otimes XG^{\lambda^i}X^{\lambda_i} \otimes G^{1+\lambda^{i+1}}X^{\lambda_{i+1}} \otimes \cdots \otimes G^{1+\lambda^{l+1}}X^{\lambda_{l+1}}.
\end{equation}
Then applying $XG^{\lambda^i}= q^{-\lambda^i}G^{\lambda^i}X$ in the $i^{\rm th}$ tensor factor and pulling the resulting $q^{-\lambda^i}$ to the front, 
we arrive at
\begin{equation}
q^{-\lambda^i}\left(G^{\lambda^1}X^{\lambda_1}\otimes G^{\lambda^2} X^{\lambda_2}\otimes \cdots \otimes G^{\lambda^i}X^{1+\lambda_i} \otimes G^{1+\lambda^{i+1}}X^{\lambda_{i+1}} \otimes \cdots \otimes G^{1+\lambda^{l+1}}X^{\lambda_{l+1}}\right)
\end{equation}
which is equal to $q^{-\lambda^i}\,G\otimes^{e_i+\lambda} X$ by definition.
\end{proof}

We prove a general combinatorial formula which can be applied in our setting.  Recall that $\Delta^l \colon H \to H^{\otimes l+1}$ is the map obtained by $l$ applications of $\Delta$, which is well defined by the coassociativity axiom of a Hopf algebra.

\begin{proposition}\label{prop:deltalxk}
Let $X$ be a $(1,G)$-skew primitive element in a Hopf algebra such that $GX=qXG$ for some $q \in \kk^\times$.  For any two integers $l,k \ge 1$, we have in $H^{\otimes l +1}$:
\begin{equation}\label{eq:deltalxk}
	\Delta^{l}(X^k) =\sum_{\lambda \in WC(k,l+1)} \sqbinom{k}{\lambda}_{q^{-1}} G \otimes^\lambda X
\end{equation}
\end{proposition}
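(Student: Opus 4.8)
The plan is to induct on the exponent $k$ for fixed $l$, using that $\Delta^l$ is an algebra homomorphism together with the multiplication rule of Lemma \ref{lem:GXeilambda} and the $q$-multinomial recursion \eqref{eq:qmultrec}. These last two ingredients are precisely tailored to make the induction close up, so once the setup is correct the argument should be essentially bookkeeping.

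First I would dispose of the base case $k=1$, which asserts that $\Delta^l(X) = \sum_{i=1}^{l+1} G \otimes^{e_i} X$, since $WC(1,l+1) = \{e_1, \ldots, e_{l+1}\}$ and $\sqbinom{1}{e_i}_{q^{-1}} = 1$. This is the standard spreading formula for a $(1,G)$-skew primitive, proved by a short auxiliary induction on $l$: the case $l=1$ is exactly $\Delta(X) = 1 \otimes X + X \otimes G = G \otimes^{e_2} X + G \otimes^{e_1} X$ by \eqref{eq:GeiX}, and the inductive step applies $\Delta$ in a single tensor factor, using $\Delta(G) = G \otimes G$ to propagate the trailing $G$'s into the newly created factor.

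For the inductive step I would write $X^k = X \cdot X^{k-1}$ and apply the algebra homomorphism $\Delta^l$, obtaining
\[
\Delta^l(X^k) = \Delta^l(X)\,\Delta^l(X^{k-1}) = \left(\sum_{i=1}^{l+1} G \otimes^{e_i} X\right)\left(\sum_{\mu \in WC(k-1,l+1)} \sqbinom{k-1}{\mu}_{q^{-1}} G \otimes^\mu X\right),
\]
using the base case on the left factor and the induction hypothesis on the right. Expanding the product and invoking Lemma \ref{lem:GXeilambda} to rewrite each $(G \otimes^{e_i} X)(G \otimes^\mu X)$ as $q^{-\mu^i}\, G \otimes^{e_i + \mu} X$, I would reindex the double sum by $\lambda := e_i + \mu$, which ranges over $WC(k,l+1)$. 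The observation that makes the reindexing go through is that $\mu^i = \lambda^i$: adding $e_i$ changes only the $i$-th coordinate, while $\lambda^i = \lambda_1 + \cdots + \lambda_{i-1}$ ignores that coordinate. Hence the coefficient of $G \otimes^\lambda X$ becomes $\sum_{i} q^{-\lambda^i} \sqbinom{k-1}{\lambda - e_i}_{q^{-1}}$, where the terms with $\lambda_i = 0$ drop out since $\lambda - e_i$ is then not a weak composition. By the recursion \eqref{eq:qmultrec} applied with parameter $q^{-1}$, this sum equals $\sqbinom{k}{\lambda}_{q^{-1}}$, which finishes the induction.

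The only genuinely delicate point, and the place I would be most careful, is tracking the $q$-powers and partial-sum indices through the reindexing: one must confirm that the exponent $q^{-\mu^i}$ produced by Lemma \ref{lem:GXeilambda} coincides with the weight $q^{-\lambda^i}$ appearing in the $q^{-1}$-version of the Pascal recursion, and that the passage from summing over pairs $(i,\mu)$ to summing over $\lambda$ correctly accounts for every contribution exactly once. Everything else is formal, as the algebra-homomorphism property of $\Delta^l$ and the automatic vanishing of invalid compositions handle the boundary terms.
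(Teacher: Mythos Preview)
Your proposal is correct and follows essentially the same approach as the paper's proof: induction on $k$, using the base case $\Delta^l(X) = \sum_i G \otimes^{e_i} X$, Lemma~\ref{lem:GXeilambda} to multiply out, and the $q^{-1}$-multinomial recursion~\eqref{eq:qmultrec} to collect coefficients. You are in fact slightly more explicit than the paper in two places---the auxiliary induction on $l$ for the base case, and the observation that $\mu^i = \lambda^i$ when $\lambda = e_i + \mu$---but the structure of the argument is identical.
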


	\begin{proof}
We prove the statement by induction on $k$ (for $l$ arbitrary).  For $k=1$, it follows easily from skew-primitivity of $X$ and the notation \eqref{eq:GeiX} that
\begin{align*}
\Delta^l(X) =  \sum_{i=1}^{l+1} G \otimes^{e_i} X
\end{align*}
Since $WC(1,l+1) = \{e_i\}_{i=1}^{l+1}$ and the binomial coefficients are all equal to 1, this confirms the base case.

Now assume that the formula \eqref{eq:deltalxk} is true for some $k \geq 1$.  Since $\Delta$ is an algebra morphism, we have by the induction hypothesis that
\begin{equation}
\begin{split}
\Delta^l(X^{k+1})&=\Delta^l(X)\Delta^l(X^k) = \left( \sum_{i=1}^{l+1} G \otimes^{e_i} X\right)
\left( \sum_{\lambda \in WC(k,l+1)} \sqbinom{k}{\lambda}_{q^{-1}} G \otimes^\lambda X \right)\\
&=\sum_{i=1}^{l+1}\sum_{\lambda \in WC(k,l+1)} \sqbinom{k}{\lambda}_{q^{-1}} (G \otimes^{e_i} X)(G \otimes^\lambda X)\\
&=\sum_{i=1}^{l+1}\sum_{\lambda \in WC(k,l+1)} \sqbinom{k}{\lambda}_{q^{-1}} q^{-\lambda^i} \ G \otimes^{e_i+\lambda} X
\end{split}
\end{equation}
where the last equality uses Lemma \ref{lem:GXeilambda}.
For each fixed $\mu \in WC(k+1,l+1)$, we group together all pairs in the sum above $(i, \lambda)$ such that $e_i+\lambda=\mu$, noting that this partitions the summands above.
The coefficient of $G\otimes^\mu X$ is then
\begin{equation}
\sum_{\substack{(i,\lambda)\\ e_i+\lambda=\mu}} \sqbinom{k}{\lambda}_{q^{-1}} q^{-\lambda^i}
= \sqbinom{k+1}{\mu}_{q^{-1}}
\end{equation}
where the last equality is \eqref{eq:qmultrec}.  This completes the induction step and the proposition is proven.
\end{proof}

\subsection{Quivers, path algebras, and their representations}\label{sec:quivers}
In this section, we establish notation and terminology for quivers and path algebras.  More detailed introductions to the topic can be found in textbooks such as \cite{ASS06,Schifflerbook,DWbook}.
Recall that a \emph{quiver} $Q =(Q_0, Q_1, s, t)$ is a quadruple consisting of a finite set of \emph{vertices} $Q_0$, a finite set of \emph{arrows} $Q_1$, and two functions $s, t : Q_1 \to Q_0$ producing the \emph{source} and \emph{target} of each arrow, respectively. We may visualize the arrows as follows:
\begin{center}
\begin{tikzcd}
\hspace{2mm} s(a) \arrow{r}{a} & t(a). \hfill \qedhere
\end{tikzcd}
\end{center}
We omit the parentheses when possible to lighten the notation, writing $sa$ and $ta$.

The \emph{path algebra} $\kk Q$ is the associative $\kk$-algebra whose underlying $\kk$-vector space has as its basis the set of all paths in $Q$, with the product of two paths being concatenation whenever possible (read left to right), and zero otherwise.	Therefore, $\kk Q$ has a direct sum decomposition as a vector space
\[
\kk Q= \bigoplus_{l=0}^\infty \kk Q_l,
\]
where $\kk Q_l$ is the subspace of $\kk Q$ generated by the set $Q_l$ of all paths of length $l$.
This decomposition makes $\kk Q$ a graded $\kk$-algebra.
For $i \in Q_0$, we write $e_i \in \kk Q$ for the idempotent corresponding to the length 0 path at $i$.

We note that $\kk Q_0$ is a semisimple $\kk$-algebra and $\kk Q_1$ a $\kk Q_0$-bimodule, inducing a natural isomorphism of the path algebra $\kk Q$ with the tensor algebra $T_{\kk Q_0}(\kk Q_1)$.
Put another way, $\kk Q$ is isomorphic to the quotient of the free associative $\kk$-algebra on generators $\{e_i\}_{i \in Q_0} \cup Q_1$, by the relations:
\begin{equation}\label{eq:kQrelations}
e_i e_j = \delta_{i,j}e_i \text{ for all } i \in Q_0, \qquad
 e_{sa}a = a = ae_{ta} \text{ for all } a \in Q_1.
 \end{equation}

We use the following slightly unconventional definition, with motivation explained below.
A \textit{representation} $V$ of a quiver $Q$ is an assignment of a finite dimensional vector space $V_i$ to each vertex $i \in Q_0$ along with a linear map $V_a: V_{ta} \to V_{sa}$ for each arrow $a \in Q_1$ (note that the map goes in the opposite direction of the arrow).
Equivalently, a representation of $Q$ is a finite dimensional right $\kk Q$-module, or a contravariant functor from the free category on $Q$ to the category of finite-dimensional vector spaces.
The reason for this convention is that we want to keep the same definition of path algebra as in \cite{KW16} which reads paths from left to right,
while using the standard convention for composition of linear maps (reading right to left) in a representation.

Given two representations $V$ and $W$ of $Q$, a \textit{morphism of quiver representations} $\varphi: V \to W$ is an assignment of a linear map 
$\varphi(i): V_i \to W_i$ to each $i \in Q_0$ so that $\varphi(sa)\circ V_a = W_a\circ\varphi(ta)$.

\subsection{Hopf actions on path algebras} \label{sec:Hopfactionpathalgebra}
We note once and for all that we restrict to actions satisfying the following assumption throughout the paper.
Let $H$ be a Hopf algebra, $Q$ a quiver, and set $S=\kk Q_0$ and $V=\kk Q_1$, so that $\kk Q=T_S(V)$.

\begin{assumption}\label{assume}
We assume that Hopf actions on path algebras in this paper preserve the ascending filtration by path length:
\begin{equation}
    S \subset S\oplus V \subset S \oplus V \oplus (V \otimes V) \subset \cdots.
\end{equation}
\end{assumption}

Under this assumption, we make the following elementary observations on parametrization of Hopf actions of $H$ on $T_S(V)$.  Let $G:=G(H)$ be the group of grouplike elements of $H$ below.

\begin{lemma}\label{lem:gactionvertices}

\begin{enumerate}[(i)]
\item Suppose an action of $H$ on $T_S(V)$ is given.  Then this action is completely determined by the $H$-module structure of $S \oplus V$.
\item Conversely, any $H$-module structure on $S \oplus V$ uniquely extends to an $H$-module structure on the algebra $T_S(V)$, which is a Hopf action exactly when it satisfies axioms (a) and (b) of Section \ref{sec:Hopfactions}. 
\item Any action of $G$ on $S$ is induced by a permutation action of $G$ on $Q_0$.
\end{enumerate}
\end{lemma}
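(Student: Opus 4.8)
The plan is to prove the three parts in order, since each is essentially a consequence of Assumption \ref{assume} together with the tensor algebra structure $\kk Q = T_S(V)$. For part \textit{(i)}, the key observation is that $S \oplus V$ generates $T_S(V)$ as an algebra: every path is a product of vertices (idempotents) and arrows. Given a Hopf action, axiom (a) of Section \ref{sec:Hopfactions} expresses $h \cdot (pq)$ in terms of the action on the factors $p$ and $q$ via the coproduct. Inductively, this means the action of any $h \in H$ on a path of length $l$ is determined by the action of the components of $\Delta^{l-1}(h)$ on the length-1 and length-0 pieces, all of which live in $S \oplus V$. So the $H$-module structure on $S \oplus V$ determines the whole action.

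For part \textit{(ii)}, I would argue that any $H$-module structure on $S \oplus V$ extends \emph{uniquely} to $T_S(V)$ by \emph{forcing} axiom (a) to hold: define the action on $\kk Q_l$ using the formula $h \cdot (v_1 \otimes \cdots \otimes v_l) = \sum (h_{1} \cdot v_1) \otimes \cdots \otimes (h_{l} \cdot v_l)$ coming from $\Delta^{l-1}(h)$. One must check this is well defined with respect to the relations defining $T_S(V)$ over $S$ (i.e.\ the bimodule structure, so that $es \otimes v$ identifications are respected), that it gives a genuine $H$-module structure (using coassociativity so the iterated coproduct is unambiguous), and that it is compatible across gradings so the pieces assemble into an action on all of $T_S(V)$. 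The filtration-preserving hypothesis of Assumption \ref{assume} is what guarantees the action restricts to $S \oplus V$ in the first place, making the extension and restriction mutually inverse. Then axioms (a) and (b) are exactly the conditions that this module structure is a Hopf action, which is what the statement asserts.

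For part \textit{(iii)}, the claim is that any action of the grouplike group $G$ on the \emph{semisimple commutative} algebra $S = \kk Q_0 \cong \kk^{|Q_0|}$ comes from a permutation of $Q_0$. Here the essential input is that $G$ acts by Hopf algebra automorphisms, so each $g \in G$ acts as a $\kk$-algebra automorphism of $S$ (since grouplikes are precisely the elements with $\Delta(g) = g \otimes g$, axiom (a) forces $g \cdot (pq) = (g\cdot p)(g \cdot q)$). An algebra automorphism of $\kk^{|Q_0|}$ must permute the primitive idempotents $\{e_i\}_{i \in Q_0}$, since these are intrinsically characterized as the minimal idempotents. This permutation of idempotents is exactly a permutation of $Q_0$, and assigning $g \mapsto (\text{this permutation})$ gives the desired permutation action inducing the $G$-action on $S$.

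I expect the main obstacle to be the well-definedness and coherence check in part \textit{(ii)}: verifying that the formula built from $\Delta^{l-1}$ respects the $S$-bimodule relations defining the tensor product $V \otimes_S \cdots \otimes_S V$ (not just the free tensor product), and that coassociativity makes the iterated definition consistent across different ways of grouping the multiplication. The other two parts are comparatively direct once the generation and idempotent arguments are in place.
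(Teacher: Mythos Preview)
Your proposal is correct and follows essentially the same approach as the paper: parts \textit{(i)} and \textit{(ii)} via the fact that $S$ and $V$ generate $T_S(V)$ as an algebra (with the iterated coproduct formula making the extension explicit), and part \textit{(iii)} via the observation that an algebra automorphism of a commutative semisimple algebra must permute its unique complete set of primitive orthogonal idempotents. The paper's own proof is only two sentences, so your version simply spells out more of the detail (particularly the coherence checks in \textit{(ii)}), but the underlying ideas are identical.
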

\begin{proof}
Parts (i) and (ii) follow from the fact that $S, V$ generate $T_S(V)$ as an algebra.  
Part (iii) follows from the fact that a commutative semisimple algebra has a unique complete set of orthogonal idempotents, and an algebra automorphism must permute this set.
\end{proof}

\section{Parametrizing actions by linear algebraic data}\label{sec:parametrize}
In this section we parametrize actions of the algebras \eqref{eq:algebras} on path algebras of quivers, in terms if basic linear algebraic data.

\subsection{Actions of $\uqb$ and Taft algebras on products of $\kk$}
We first consider quivers without arrows (i.e. algebras of the form $\kk \times \kk \times \cdots \times \kk$).
The following result and its corollaries parametrize actions of $\uqb$ on such algebras.
We write $G=\langle g \rangle \cong \ZZ$, and  $|q|$ denotes the multiplicative order of $q$ in $\kk^\times$.

\begin{proposition}
\label{prop:uqbonvertices}
Let $Q_0$ be the vertex set of a quiver.  

\noindent (a) The following data determines a Hopf action of $\uqb$ on $\kk Q_0$.
\begin{enumerate}[(i)]
\item A permutation action of $G$ on the set $Q_0$;
\item A collection of scalars $(\gamma_i \in \kk)_{i \in Q_0}$ such that
\begin{equation}\label{eq:qgamma}
\gamma_{g\cdot i} = q^{-1} \gamma_i \quad \forall i \in Q_0.
\end{equation}
\end{enumerate}
The $x$-action is given by
\begin{equation} \label{eq:xonvertex}
x \cdot e_i = \gamma_i e_i - \gamma_i q^{-1} e_{g\cdot i} \quad {\text{ for all $i \in Q_0$.}}
\end{equation}

\noindent (b) Every action of $\uqb$ on $\kk Q_0$ is of the form above.
\end{proposition}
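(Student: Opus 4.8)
My plan is to prove the two directions separately, organizing the necessity direction (b) so that its computations also serve as the verifications needed for the sufficiency direction (a). Throughout I write $\{e_i\}_{i\in Q_0}$ for the primitive orthogonal idempotents of $\kk Q_0$, so that $e_ie_j=\delta_{ij}e_i$ and $\sum_i e_i=1$. For (b), I begin from an arbitrary Hopf action of $\uqb$ on $\kk Q_0$ and reconstruct the claimed data. Since $g$ is grouplike, axioms (a)–(b) force $g$ to act as an algebra automorphism of $\kk Q_0$, which by Lemma \ref{lem:gactionvertices}(iii) is induced by a permutation of $Q_0$ that I denote $i\mapsto g\cdot i$; thus $g\cdot e_i=e_{g\cdot i}$. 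Because $x$ is $(1,g)$-skew-primitive, axiom (a) specializes to the skew-Leibniz rule $x\cdot(pq)=p(x\cdot q)+(x\cdot p)(g\cdot q)$, while axiom (b) gives $x\cdot 1=0$.

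Applying the skew-Leibniz rule to $e_i=e_i^2$ and comparing coefficients shows that $x\cdot e_i$ is supported on $\{e_i,e_{g\cdot i}\}$, say $x\cdot e_i=a_ie_i+b_ie_{g\cdot i}$ (and that $x\cdot e_i=0$ outright at any fixed point $g\cdot i=i$). Applying it instead to $e_ie_j=0$ for $i\ne j$ yields a constraint that is nontrivial exactly when $i=g\cdot j$, where it reads $b_i=-a_{g\cdot i}$; writing $\gamma_i:=a_i$, this gives $x\cdot e_i=\gamma_ie_i-\gamma_{g\cdot i}e_{g\cdot i}$. Finally I feed in the algebra relation $gx=qxg$, which must hold as operators on $\kk Q_0$: evaluating both sides on $e_i$ and comparing the coefficient of $e_{g\cdot i}$ yields exactly $\gamma_{g\cdot i}=q^{-1}\gamma_i$, which is \eqref{eq:qgamma}, and substituting this back recovers \eqref{eq:xonvertex}. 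At a fixed point the same relation gives $\gamma_i=q\gamma_i$, hence $\gamma_i=0$ since $q\ne 1$, matching \eqref{eq:qgamma}. This proves (b).

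For (a), I reverse the reasoning: given a permutation action of $G$ on $Q_0$ and scalars $(\gamma_i)$ satisfying \eqref{eq:qgamma}, I define operators $g,x$ on $\kk Q_0$ by $g\cdot e_i=e_{g\cdot i}$ and by \eqref{eq:xonvertex}, extended linearly, and check that they assemble into a Hopf action. As $\uqb$ is generated by $g^{\pm 1}$ and $x$ with the single relation $gxg^{-1}=qx$, I first confirm $gx=qxg$ as operators — this is the reverse of the coefficient comparison above and is exactly where \eqref{eq:qgamma} is used — so that $\kk Q_0$ becomes a $\uqb$-module. It then suffices to verify axioms (a)–(b) on the algebra generators $g,x$ of $\uqb$ and on the idempotent basis of $\kk Q_0$. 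Axiom (b) reduces to $g\cdot 1=1$ (immediate, as $g$ permutes the $e_i$) and $x\cdot 1=0$, the latter following from \eqref{eq:qgamma} after reindexing $\sum_i x\cdot e_i$ by $j=g\cdot i$. Axiom (a) is automatic for the grouplike $g$, and for $x$ becomes the skew-Leibniz identity on pairs $(e_i,e_j)$, which I check in the cases $i=j$ and $i\ne j$; in the only delicate subcase, $i=g\cdot j$, the two surviving terms cancel precisely because \eqref{eq:qgamma} gives $\gamma_{g\cdot j}=q^{-1}\gamma_j$.

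I expect the only real difficulty to be bookkeeping rather than anything conceptual. The casework must stay correct when the permutation $g$ has fixed points or short orbits, so that $e_i$ and $e_{g\cdot i}$ coincide or an orbit closes up; this is where index- and sign-errors are easiest to make, both in the support argument and in the skew-Leibniz verification. The conceptual point that keeps everything aligned is the single observation that condition \eqref{eq:qgamma} is nothing but the operator form of the commutation relation $gxg^{-1}=qx$; once this equivalence is isolated, both directions of the proposition follow from the same computation read forwards and backwards.
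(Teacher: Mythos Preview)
Your proposal is correct and follows essentially the same approach as the paper's proof: both derive the support of $x\cdot e_i$ from the idempotent relation $e_i=e_i^2$, extract the relation $b_j=-a_{g\cdot j}$ from $e_ie_j=0$ with $i=g\cdot j$, and obtain \eqref{eq:qgamma} from the commutation relation $gx=qxg$; the sufficiency direction is the same case analysis in reverse. The only cosmetic differences are that you treat (b) before (a) and explicitly verify $x\cdot 1=0$, which the paper omits as a trivial counit check.
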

\begin{proof}
(A)
It is immediate to check that the data given in (i) and (ii) makes $\kk Q_0$ into a $\uqb$-module. To check we have a Hopf action, $g$ is grouplike and acts as an algebra automorphism by assumption, so it remains to check the action of $x$ satisfies the axiom of a Hopf action.  It is enough to consider the relations defining $\kk Q$ in \eqref{eq:kQrelations}, so we need to check that
\begin{equation}\label{eq:xdoteicheck}
x\cdot (\delta_{i,j} e_i) = e_i(x\cdot e_j) + (x\cdot e_i)(g\cdot e_j).
\end{equation}
Assume first $i=j$.  By substitution we get
\[
\begin{split}
\gamma_i e_i -\gamma_i q^{-1} e_{g\cdot i}&= e_i(\gamma_i e_i -\gamma_i q^{-1} e_{g\cdot i}) + (\gamma_i e_i -\gamma_i q^{-1} e_{g\cdot i})(g\cdot e_i)\\
&=\gamma_i e_i - \gamma_i q^{-1} e_i e_{g\cdot i} + \gamma_i e_i e_{g\cdot i} - \gamma_i q^{-1} e_{g\cdot i}.
\end{split}
\]
Now the condition \eqref{eq:qgamma} gives that $\gamma_i=0$ whenever $g\cdot i=i$, since $q\neq 1$.  This means that the second and third terms above are equal to 0, and so \eqref{eq:xdoteicheck} is verified in this case.
Now assume $i \neq j$.   Substitution gives
\[
\begin{split}
0&= e_i(\gamma_j e_j -\gamma_j q^{-1} e_{g\cdot j}) + (\gamma_i e_i -\gamma_i q^{-1} e_{g\cdot i})(g\cdot e_j)\\
&= 0 - \gamma_j q^{-1} e_i e_{g\cdot j} + \gamma_i e_i e_{g\cdot j} - 0.
\end{split}
\]
If $i \neq g\cdot j$, the remaining terms are 0, and if  $i =g\cdot j$, the relation \eqref{eq:qgamma} forces the two remaining terms to cancel, so the equation is verified either way.

\medskip

\noindent (B) Taking an arbitrary action of $\uqb$ on $\kk Q_0$, we have an action of $G$ on $Q_0$ by Lemma \ref{lem:gactionvertices}, and it remains to show that $x$ acts by the formula \eqref{eq:xonvertex} for scalars $\gamma_i$ satisfying \eqref{eq:qgamma}.  We have first from $e_i = e_i^2$ that
\begin{equation}\label{eq:xdoteiei}
x\cdot e_i = x\cdot (e_i e_i) = e_i(x\cdot e_i) + (x\cdot e_i)(g\cdot e_i) = (x\cdot e_i)(e_i + e_{g\cdot i}),
\end{equation}
which shows that $x\cdot e_i=\gamma_i e_i + \gamma'_i e_{g\cdot i}$ for some $\gamma_i, \gamma'_i \in \kk$.
For $i$ such that $g\cdot i = i$, it follows directly from \eqref{eq:xdoteiei} that $x\cdot e_i=0$ and thus we may take $\gamma_i=0$ in this case.  Now assume $g\cdot i \neq i$.
Then for $i \neq j$, we have
\begin{equation}\label{eq:xdoteiej}
0= x\cdot (e_j e_i) = e_j(x\cdot e_i) + (x\cdot e_j)(g\cdot e_i) = 0+\gamma'_i e_j e_{g\cdot i} + \gamma_j e_j e_{g\cdot i} + 0.
\end{equation}
Taking $j= g\cdot i$, we find $\gamma_i' = - \gamma_{g\cdot i}$, so that $x\cdot e_i=\gamma_i e_i - \gamma_{g\cdot i} e_{g\cdot i}$.  Finally, the relation $gx=qxg$ in $\uqb$ applied to this forces
\begin{equation}
\gamma_i e_{g\cdot i} - \gamma_{g\cdot i} e_{g^2\cdot i} = \gamma_{g\cdot i} q e_{g\cdot i} - \gamma_{g^2 \cdot i}e_{g^2\cdot i},
\end{equation}
and by comparing coefficients we confirm that $\gamma_i = \gamma_{g\cdot i} q$, so every action comes from data as in part (A).
\end{proof}

\begin{corollary}\label{cor:uqbonvertices1}
If $q$ is not a root of unity, then every Hopf action of $\uqb$ on $\kk Q_0$ factors through $\uqb/\langle g^n-1, x\rangle \simeq \kk(\ZZ/n\ZZ)$ for some $n \in \ZZ^+$.
\end{corollary}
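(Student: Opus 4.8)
The plan is to use the explicit description of $\uqb$-actions on $\kk Q_0$ provided by Proposition \ref{prop:uqbonvertices}. Any such action is determined by a permutation action of $G = \langle g \rangle \cong \ZZ$ on the finite set $Q_0$ together with scalars $(\gamma_i)_{i \in Q_0}$ satisfying the compatibility \eqref{eq:qgamma}. To show the action factors through $\uqb/\langle g^n - 1,\, x\rangle$, it suffices to produce an $n \in \ZZ^+$ such that both $g^n - 1$ and $x$ act as zero on $\kk Q_0$.

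First I would handle $g^n - 1$. Since $Q_0$ is finite, the permutation action of $G$ on $Q_0$ has finite image in the symmetric group on $Q_0$; equivalently, $g$ acts as a permutation of finite order. Let $n$ be the order of this permutation (or the least common multiple of its cycle lengths), so that $g^n$ fixes every $i \in Q_0$ and hence $g^n$ acts as the identity on $\kk Q_0$. This makes $g^n - 1$ act as zero.

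The key point, and the step where the hypothesis that $q$ is \emph{not} a root of unity enters, is showing that $x$ acts as zero, i.e.\ that every $\gamma_i = 0$. Iterating \eqref{eq:qgamma} around a cycle of the permutation, we get $\gamma_i = \gamma_{g^n \cdot i} = q^{-n} \gamma_i$ where $n$ is the cycle length of $i$ (using $g^n \cdot i = i$). Thus $(1 - q^{-n})\gamma_i = 0$. Since $q$ is not a root of unity and $n \geq 1$, we have $q^{-n} \neq 1$, forcing $\gamma_i = 0$ for every $i$. By \eqref{eq:xonvertex} this gives $x \cdot e_i = 0$ for all $i$, so $x$ acts as zero.

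Combining these, every such action factors through the quotient $\uqb/\langle g^n - 1,\, x\rangle$ for the $n$ chosen above. It remains only to identify this quotient: setting $x = 0$ kills the skew-primitive generator and collapses $\uqb$ to the group algebra of $G/\langle g^n\rangle \cong \ZZ/n\ZZ$, giving the stated isomorphism $\uqb/\langle g^n - 1,\, x\rangle \simeq \kk(\ZZ/n\ZZ)$. I do not expect any real obstacle here; the only subtlety is making sure $n$ is chosen uniformly (as the common order of the permutation) so that a single quotient works for all cycles simultaneously, which the finiteness of $Q_0$ guarantees.
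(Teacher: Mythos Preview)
Your proposal is correct and follows essentially the same approach as the paper: the paper's proof is the single sentence ``Since $Q_0$ is finite, every $G$-orbit is finite, and so \eqref{eq:qgamma} implies that every $\gamma_i=0$ if $q$ is not a root of unity,'' and you have simply unpacked this (and also made explicit the easy points about $g^n-1$ and the identification of the quotient, which the paper leaves implicit). One small notational wrinkle: you use $n$ first for the order of the full permutation and then again for the cycle length of a single $i$; either choice makes the argument go through, but you should pick one and stick with it.
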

\begin{proof}
Since $Q_0$ is finite, every $G$-orbit is finite, and so \eqref{eq:qgamma} implies that every $\gamma_i=0$ if $q$ is not a root of unity.
\end{proof}

The next corollary (together with Proposition \ref{prop:uqbonvertices}) gives a generalization of  \cite[Prop.~3.5]{KW16} from ordinary Taft algebras to generalized Taft algebras.

\begin{corollary}\label{cor:uqbonvertices2}
Suppose $q$ is a primitive $r^{\rm th}$ root of unity and consider an action of $\uqb$ on $\kk Q_0$.  Then for all $i \in Q_0$, either $r \mid \#(G\cdot i)$ or $\gamma_i=0$.
The action factors through the generalized Taft algebra $T(r,n)$ if and only if, for all $i \in Q_0$, we have $\#(G\cdot i) \mid n$ and either $\#(G\cdot i)= r$ or $\gamma_i = 0$.
\end{corollary}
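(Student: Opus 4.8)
The plan is to prove the two assertions separately, both building on the explicit description of the action supplied by Proposition \ref{prop:uqbonvertices}, namely $x\cdot e_i=\gamma_i e_i-\gamma_i q^{-1}e_{g\cdot i}$ together with the constraint \eqref{eq:qgamma}. For the first assertion, I would fix $i\in Q_0$ and set $m=\#(G\cdot i)$. Iterating \eqref{eq:qgamma} gives $\gamma_{g^k\cdot i}=q^{-k}\gamma_i$, and since $g^m\cdot i=i$ we obtain $\gamma_i=q^{-m}\gamma_i$, i.e.\ $(1-q^{-m})\gamma_i=0$. As $q$ is a primitive $r^{\rm th}$ root of unity, $q^{-m}=1$ holds exactly when $r\mid m$; hence either $\gamma_i=0$ or $r\mid\#(G\cdot i)$, which is the first claim.

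For the second assertion, the key observation is that an action of $\uqb$ on $\kk Q_0$ factors through $T(r,n)=\uqb/\langle g^n-1,\,x^r\rangle$ if and only if both $g^n-1$ and $x^r$ act as the zero operator. The first relation is easy: the grouplike $g$ acts by the permutation operator, so $g^n=\mathrm{id}$ if and only if $g^n\cdot i=i$ for all $i$, i.e.\ $\#(G\cdot i)\mid n$ for all $i\in Q_0$. The entire difficulty is therefore concentrated in analyzing when $x^r=0$.

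To handle $x^r$, I would write $x$ as the composite operator $x=D(1-\sigma)$, where $\sigma(e_i)=e_{g\cdot i}$ is the permutation operator representing $g$ and $D(e_i)=\gamma_i e_i$ is the diagonal operator of the scalars; this equals \eqref{eq:xonvertex} after using $\gamma_{g\cdot i}=q^{-1}\gamma_i$. The relation $gxg^{-1}=qx$ (or a direct check from \eqref{eq:qgamma}) yields the commutation $\sigma D=qD\sigma$, and consequently $(1-q^j\sigma)D=D(1-q^{j+1}\sigma)$. A short induction, moving the extra factor of $D$ leftward through the product at each step, then gives $x^r=D^r\prod_{j=0}^{r-1}(1-q^j\sigma)$. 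The main obstacle is evaluating this product: since $q$ is a primitive $r^{\rm th}$ root of unity, the scalars $q^j$ for $0\le j<r$ run over all $r^{\rm th}$ roots of unity, so the polynomials $\prod_{j=0}^{r-1}(1-q^j t)$ and $1-t^r$ have the same (simple) roots and the same constant term $1$, forcing $\prod_{j=0}^{r-1}(1-q^j\sigma)=1-\sigma^r$. Hence $x^r=D^r(1-\sigma^r)$, and using $\gamma_{g^r\cdot i}=q^{-r}\gamma_i=\gamma_i$ this acts by $x^r\cdot e_i=\gamma_i^r\bigl(e_i-e_{g^r\cdot i}\bigr)$.

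Finally I would assemble the pieces. The formula shows $x^r=0$ if and only if, for every $i$, either $\gamma_i=0$ or $g^r\cdot i=i$ (equivalently $\#(G\cdot i)\mid r$). Combining this with the first assertion—that $\gamma_i\neq0$ forces $r\mid\#(G\cdot i)$—the condition $\gamma_i\neq 0$ then forces $\#(G\cdot i)=r$; so $x^r=0$ is equivalent to ``$\gamma_i=0$ or $\#(G\cdot i)=r$ for all $i$.'' Intersecting this with the requirement $\#(G\cdot i)\mid n$ for all $i$ coming from $g^n=1$ gives precisely the stated criterion for the action to factor through $T(r,n)$.
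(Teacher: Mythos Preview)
Your argument is correct and complete. The first assertion and the final assembly match the paper's proof essentially verbatim, but your computation of $x^r\cdot e_i$ takes a genuinely different route. The paper first invokes Proposition~\ref{prop:deltalxk} (the $q$-multinomial formula for $\Delta^l(X^k)$) together with \eqref{eq:binomvanish} to show that $x^r\cdot e_i$ must lie in $\kk e_i+\kk e_{g^r\cdot i}$, and then computes the two coefficients by iterating \eqref{eq:xonvertex} and handling the sign via a parity analysis of $q^{-r(r+1)/2}$. You instead factor the operator as $x=D(1-\sigma)$, use the $q$-commutation $\sigma D=qD\sigma$ to push all copies of $D$ to the left, and then collapse $\prod_{j=0}^{r-1}(1-q^j\sigma)=1-\sigma^r$ via the root-of-unity factorization of $1-t^r$. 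Your approach is more self-contained and sidesteps both the coalgebra machinery and the parity case-split; the paper's approach, on the other hand, is set up to be reused verbatim in Corollary~\ref{cor:uqbonkQ2}, where the same $q$-multinomial vanishing is applied to $x^r\cdot a$ for arrows $a$.
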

\begin{proof}
It follows from \eqref{eq:qgamma} that $q^{-\#(G\cdot i)}\gamma_i = \gamma_i$ for all $i \in Q_0$, so either $\gamma_i=0$ or $r$ divides $\#(G\cdot i)$.  
We need a preliminary computation: from Proposition \ref{prop:deltalxk} we have that 
\begin{equation}\label{eq:xdab}
x^r\cdot e_i=x^r \cdot (e_ie_i) = \sum_{\lambda \in WC(r,2)} \sqbinom{r}{\lambda}_{q^{-1}} (x^{\lambda_1}\cdot e_i) (g^{\lambda_1}x^{\lambda_2} \cdot e_i) = (x^r \cdot e_i)(g^r \cdot e_i) + e_i(x^r \cdot e_i),
\end{equation}
where the last equality uses \eqref{eq:binomvanish}. This shows that $x^r\cdot e_i$ is of the form $\alpha e_i + \beta e_{g^r\cdot i}$ for some $\alpha, \beta \in \kk$.
Then we can directly compute $\alpha, \beta$ from iterated application of \eqref{eq:xonvertex}, using property \eqref{eq:qgamma}.  Namely, we have
\begin{equation}\label{eq:xdei}
\begin{split}
x^r\cdot e_i &= \gamma_i^r e_i + (-1)^r \gamma_i \cdots \gamma_{g^{r-1}\cdot i} q^{-r} e_{g^r\cdot i} \\
&= \gamma_i^r e_i + (-1)^r \gamma_i^r q^{-1} \cdots q^{-(r-1)} q^{-r} e_{g^r\cdot i} 
= \gamma_i^r(e_i - e_{g^r\cdot i}),
\end{split}
\end{equation}
where the first equality uses that $r \leq \#(G\cdot i)$ guarantees the $e_{g^k\cdot i}$ are linearly independent for $1<k<r$ whenever $\gamma_i\neq 0$,
The last equality above uses the identity 
\begin{equation}
q^{-1} \cdots q^{-(r-1)} q^{-r}=q^{-\frac{r(r+1)}{2}} =
\begin{cases}
1 & r\ \text{odd}\\
-1 & r\ \text{even},
\end{cases}
\end{equation}
so that the coefficient of $e_{g^r \cdot i}$ is $-\gamma_i^r$ regardless of the parity of $r$.

Now assume the action factors through $T(r,n)$.
Since $g^n=1$ in $T(r,n)$, we see that $\#(G\cdot i)$ divides $n$ for all $i \in Q_0$.
Since $x^r=0$, we get from \eqref{eq:xdei} that $g^r\cdot i = i$ whenever $\gamma_i \neq 0$ and thus $\#(G\cdot i)=r$ since $r$ divides $\#(G\cdot i)$.  

The converse is immediate, again using \eqref{eq:xdei}.
\end{proof}

\subsection{Actions of $\uqb$ and Taft algebras on path algebras}
We now extend the results of the previous section by adding arrows to the quivers.

\begin{theorem}
\label{thm:uqbonkQ}
Let $Q$ be a quiver.  

\noindent (a) The following data determines a Hopf action of $\uqb$ on $\kk Q$.
\begin{enumerate}[(i)]
\item A Hopf action of $\uqb$ on $\kk Q_0$ (see Proposition \ref{prop:uqbonvertices});
\item A representation of $G$ on $\kk Q_1$ satisfying $s(g\cdot a) = g\cdot sa$ and $t(g\cdot a) = g\cdot ta$ for all $a \in Q_1$.
\item A $\kk$-linear endomorphism $\sigma: \kk Q_0 \oplus \kk Q_1 \to \kk Q_0 \oplus \kk Q_1$
\end{enumerate}
satisfying
\begin{enumerate}[($\sigma$1)]
\item $\sigma(\kk Q_0)=0$;
\item $\sigma(a) = e_{sa}\sigma(a)e_{g\cdot ta}$ for all $a \in Q_1$;
\item $\sigma(g\cdot a) = q^{-1} g\cdot \sigma(a)$ for all $a \in Q_1$.  
\end{enumerate}
With this data, the $x$-action is given on $a \in Q_1$ by
\begin{equation} \label{eq:xonarrow}
x \cdot a = \gamma_{ta} a - \gamma_{sa}q^{-1}(g\cdot a) + \sigma(a).
\end{equation}

\noindent (b) Every (filtered) action of $\uqb$ on $\kk Q$ is of the form above.
\end{theorem}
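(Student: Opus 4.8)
The plan is to lean entirely on Lemma~\ref{lem:gactionvertices}, which reduces both directions to statements about the generating space $\kk Q_0 \oplus \kk Q_1$ together with the defining relations \eqref{eq:kQrelations} of $\kk Q$. For part (a), Lemma~\ref{lem:gactionvertices}(ii) tells me that any $\uqb$-module structure on $\kk Q_0 \oplus \kk Q_1$ extends uniquely to $\kk Q$ and is a Hopf action precisely when axioms (a) and (b) hold. So I would first check that the proposed formulas genuinely define a $\uqb$-module on $\kk Q_0 \oplus \kk Q_1$, i.e.\ that the operators satisfy $gxg^{-1}=qx$: on $\kk Q_0$ this is Proposition~\ref{prop:uqbonvertices}, and on an arrow $a$ it is a direct substitution into \eqref{eq:xonarrow} in which condition $(\sigma 3)$ matches the $\sigma$-terms while \eqref{eq:qgamma} and the source/target compatibility of (ii) match the remaining two terms. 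It then remains to verify the Hopf-action axiom (a) for $x$ on the relations \eqref{eq:kQrelations}. The relation on idempotents is exactly \eqref{eq:xdoteicheck} from Proposition~\ref{prop:uqbonvertices}, so the genuine content is checking $x\cdot(e_{sa}a)=x\cdot a=x\cdot(a e_{ta})$ using $\Delta(x)=1\otimes x + x\otimes g$; here condition $(\sigma 2)$, which fixes the source and target of $\sigma(a)$, is exactly what makes the idempotents absorb correctly. Axiom (b) reduces to $x\cdot 1=\sum_{i\in Q_0}x\cdot e_i=0$, which cancels after reindexing the two sums in \eqref{eq:xonvertex} via \eqref{eq:qgamma} and the fact that $g$ permutes $Q_0$.

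For part (b) I start from an arbitrary filtered action and read off the data (i)--(iii). Restricting to $\kk Q_0$ and invoking Lemma~\ref{lem:gactionvertices}(iii) together with Proposition~\ref{prop:uqbonvertices} produces the permutation action of $G$ and the scalars $\gamma_i$ of (i). The next step is to show $g$ preserves $\kk Q_1$, yielding the representation in (ii): writing $g\cdot a=e_{g\cdot sa}(g\cdot a)e_{g\cdot ta}$ confines $g\cdot a$ to a single source/target block, and for a non-loop arrow the degree-$0$ part $e_{g\cdot sa}\,\kk Q_0\,e_{g\cdot ta}$ vanishes automatically, giving $g\cdot a\in\kk Q_1$ and the compatibilities $s(g\cdot a)=g\cdot sa$, $t(g\cdot a)=g\cdot ta$.

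Once $g$ is known to preserve $\kk Q_1$, I define $\sigma$ on $\kk Q_1$ by rearranging \eqref{eq:xonarrow}, namely $\sigma(a):=x\cdot a-\gamma_{ta}a+\gamma_{sa}q^{-1}(g\cdot a)$, and set $\sigma|_{\kk Q_0}=0$, so that $(\sigma 1)$ holds by construction. For $(\sigma 2)$ I would apply axiom (a) to the two factorizations $a=e_{sa}a$ and $a=a e_{ta}$; using \eqref{eq:xonvertex} and the vanishing $\gamma_i=0$ at $g$-fixed vertices (forced by \eqref{eq:qgamma}), these give $x\cdot a=e_{sa}(x\cdot a)-\gamma_{sa}q^{-1}(g\cdot a)$ and $x\cdot a=\gamma_{ta}a+(x\cdot a)e_{g\cdot ta}$, and feeding these into the definition of $\sigma$ shows $\sigma(a)=e_{sa}\sigma(a)=\sigma(a)e_{g\cdot ta}$, which is $(\sigma 2)$. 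Finally, $(\sigma 3)$ follows by applying the module relation $g\cdot(x\cdot v)=q\,x\cdot(g\cdot v)$ with $v=a$: after substituting \eqref{eq:xonarrow} and using \eqref{eq:qgamma} to rewrite $\gamma_{g\cdot sa},\gamma_{g\cdot ta}$, all terms except the $\sigma$-terms cancel, leaving $g\cdot\sigma(a)=q\,\sigma(g\cdot a)$, i.e.\ $(\sigma 3)$.

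Most of the work is organizational rather than conceptual: the key is to track, term by term, which idempotents annihilate which pieces, equivalently to keep the source/target supports straight. The genuinely delicate point---and what I expect to be the main obstacle---is the loop case in showing that $g$ preserves $\kk Q_1$, since for a loop at a $g$-fixed vertex the degree-$0$ block $e_{g\cdot sa}\,\kk Q_0\,e_{g\cdot ta}$ does not vanish for free; controlling this constant term is where the filtered hypothesis must be used carefully, and it is the step I would scrutinize most before committing to the full argument.
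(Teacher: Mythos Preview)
Your plan tracks the paper's proof closely: both reduce to the relations \eqref{eq:kQrelations} via Lemma~\ref{lem:gactionvertices}, and both verify the Hopf-action axiom for $x$ term by term using the case split on whether $g$ fixes the relevant vertex. The only organizational difference in part (b) is that you define $\sigma(a)$ by rearranging \eqref{eq:xonarrow}, so the formula holds by construction and the work goes into $(\sigma2)$, whereas the paper sets $\sigma(a):=e_{sa}(x\cdot a)-a(x\cdot e_{ta})$ and then derives both \eqref{eq:xonarrow} and a second expression $\sigma(a)=(x\cdot a)e_{g\cdot ta}-(x\cdot e_{sa})(g\cdot a)$, from which the two halves of $(\sigma2)$ are read off directly. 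Either route works and they are not meaningfully different.

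The point you flag as ``the main obstacle'' is real, and you are right not to commit to it. The paper disposes of item (ii) by a bare appeal to Lemma~\ref{lem:gactionvertices}, but that lemma says nothing about $g$ preserving $\kk Q_1$; Assumption~\ref{assume} only gives $g\cdot a\in\kk Q_0\oplus\kk Q_1$, and for a loop $a$ at vertex $i$ the degree-zero block $e_{g\cdot i}\,\kk Q_0\,e_{g\cdot i}=\kk e_{g\cdot i}$ is nonzero (this is an issue for \emph{any} loop, not only at $g$-fixed vertices). In fact it cannot be ruled out: on the Jordan quiver (one vertex, one loop $a$, so $\kk Q=\kk[a]$), letting $g$ act by the algebra automorphism $a\mapsto a+c$ with $c\neq 0$ and letting $x$ act by $0$ gives a filtered Hopf action of $\uqb$ (one checks $gxg^{-1}=0=qx$ and the skew-derivation axiom trivially) in which $g$ does not preserve $\kk Q_1$, and this action is not of the form in part (a). So the gap you identified is genuine and is present in the paper's argument as well; part (b) as literally stated requires either an added hypothesis (for instance that $G$ preserves $\kk Q_1$) or a reinterpretation of the $G$-representation in (ii) as the associated graded action on $(\kk Q_0\oplus\kk Q_1)/\kk Q_0$. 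Your instinct to scrutinize this step was exactly right.
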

\begin{proof}
(A) It is straightforward to check that the given data makes $\kk Q_0 \oplus \kk Q_1$ into a $\uqb$-module, thus inducing a $\uqb$-module structure on $\kk Q\cong T_{\kk Q_0}(\kk Q_1)$ as in Lemma \ref{lem:gactionvertices}.
Furthermore, assumption (ii) is equivalent to the $G$-action on $\kk Q_1$ in (ii) satisfying the axioms of a Hopf action.
It remains to show that the proposed action of $x$ satisfies the axioms of a Hopf action, by checking that it is compatible with the relations \eqref{eq:kQrelations} defining $\kk Q$.

We first perform a preliminary computation that will simplify the remainder of the proof: namely, given the data of \textit{(i)} and \textit{(ii)}, we have:
\begin{equation}\label{eq:xonarrow2}
(x \cdot e_{sa}) (g\cdot a) + a(x \cdot e_{ta})  = \gamma_{ta} a - \gamma_{sa}q^{-1}(g\cdot a) .
\end{equation}
This follows by direct substitution of \eqref{eq:xonvertex}, recalling that for any $i \in Q_0$, we have $g\cdot i = i$  implies $\gamma_i =0$.

It remains to check that the stated action of $x$ preserves the relations involving an arrow in \eqref{eq:kQrelations}.  For the first, we get
\begin{equation}
\begin{split}
x \cdot (e_{sa} a) &= e_{sa}(x \cdot a) + (x \cdot e_{sa})(g\cdot a)\\
&= e_{sa}(x \cdot e_{sa}) (g\cdot a) + e_{sa}a(x \cdot e_{ta}) + e_{sa}\sigma(a) + (x\cdot e_{sa})(g\cdot a),
\end{split}
\end{equation}
and we wish to show that this is equal to  \eqref{eq:xonarrow}.  Substituting in \eqref{eq:xonarrow2} and simplifying using ($\sigma 2$) shows that the equality of the two expressions is equivalent to the vanishing of $e_{sa}(x\cdot e_{sa})(g\cdot a)$. This can be calculated by substituting \eqref{eq:xonvertex}:
\begin{equation}
e_{sa}(x\cdot e_{sa})(g\cdot a) = \gamma_{sa}e_{sa}(g\cdot a) - \gamma_{sa}q^{-1} e_{sa} e_{g\cdot sa} (g\cdot a).
\end{equation}
If $\gamma_{sa} =0$, then the term clearly vanishes.  If $\gamma_{sa} \neq 0$, then we know $sa \neq g\cdot sa (=s(g\cdot a))$, which also causes the term to vanish.
The other relation $x\cdot a = x \cdot (a e_{ta})$ is checked similarly, and thus the given data determines a Hopf action of $\uqb$ on $\kk Q$.

\medskip

\noindent (B) Taking an arbitrary action of $\uqb$ on $\kk Q$, it restricts to an action on $\kk Q_0$ by the filtration assumption (\ref{assume}) on our Hopf actions, giving \textit{(i)}.  We get \textit{(ii)} from Lemma \ref{lem:gactionvertices}, so it remains to show the existence of a map $\sigma$ as in \textit{(iii)} such that the formula \eqref{eq:xonarrow} holds.
Define this map by  $\sigma(\kk Q_0)=0$, and for $a \in \kk Q_1$ set 
\begin{equation}\label{eq:sigmaformula}
\sigma(a):=e_{sa}(x\cdot a) - a (x\cdot e_{ta})
\end{equation}
It is immediate to see that property $(\sigma3)$ is satisfied, from the relation $xg=q^{-1}gx$, and we will return to $(\sigma2)$ momentarily.

From the relation $a = e_{sa}a$ we get the expression
\begin{equation}\label{eq:xarelation1}
\begin{split}
x\cdot a= x \cdot (e_{sa} a) &= e_{sa}(x \cdot a) + (x \cdot e_{sa})(g\cdot a)\\
&= e_{sa}(x \cdot a) + (x \cdot e_{sa})(g\cdot a) +a(x\cdot e_{ta}) - a(x\cdot e_{ta})\\
&= (x \cdot e_{sa})(g\cdot a) +a(x\cdot e_{ta}) + \sigma(a),
\end{split}
\end{equation}
verifying that \eqref{eq:xonarrow} holds (via \eqref{eq:xonarrow2}).  It remains to show property $(\sigma2)$.  Note that $\sigma(a)= e_{sa}\sigma(a)$ follows directly from the definition \eqref{eq:sigmaformula}.  To get $\sigma(a)=\sigma(a)e_{ta}$, we need another formula for $\sigma$: this comes from  the relation $a = ae_{ta}$, which gives
\begin{equation}\label{eq:xarelation2}
\begin{split}
x\cdot a= x \cdot (ae_{ta} ) &=a(x\cdot e_{ta}) + (x\cdot a)(g\cdot e_{ta}) \\
&= a(x\cdot e_{ta}) + (x\cdot a)(g\cdot e_{ta}) +(x\cdot e_{sa})(g\cdot a) - (x\cdot e_{sa})(g\cdot a)\\
&= (x \cdot e_{sa})(g\cdot a) + a(x\cdot e_{ta}) + (x\cdot a)(g\cdot e_{ta}) - (x\cdot e_{sa})(g\cdot a).
\end{split}
\end{equation}
Comparing this with the last line of \eqref{eq:xarelation1}, we get the expression
\begin{equation}\label{eq:sigmaformula2}
\sigma(a)=(x\cdot a)(g\cdot e_{ta}) - (x\cdot e_{sa})(g\cdot a), 
\end{equation}
which shows that $\sigma(a) = \sigma(a)e_{g\cdot ta}$.  Thus property $(\sigma2)$ holds, and the proof is completed.
\end{proof}

\begin{corollary}\label{cor:uqbonkQ1}
Suppose $q$ is not a root of unity.   For any action of $\uqb$ on $\kk Q$, the map $\sigma$ of Theorem \ref{thm:uqbonkQ} is nilpotent, and so $x\cdot a = \sigma(a)$ for all $a \in Q_1$. 

\end{corollary}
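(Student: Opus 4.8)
The plan is to first dispatch the displayed consequence, which is essentially free, and then to prove nilpotency of $\sigma$ by a similarity argument driven by the commutation relation $(\sigma 3)$.

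First I would record that when $q$ is not a root of unity every scalar $\gamma_i$ vanishes. Indeed, each $G$-orbit in the finite set $Q_0$ has some finite size $N=\#(G\cdot i)\ge 1$, and iterating \eqref{eq:qgamma} once around the orbit gives $\gamma_i = q^{-N}\gamma_i$; since $q^N\neq 1$, this forces $\gamma_i=0$. (This is exactly the computation behind Corollary \ref{cor:uqbonvertices1}.) Substituting $\gamma_{sa}=\gamma_{ta}=0$ into the formula \eqref{eq:xonarrow} from Theorem \ref{thm:uqbonkQ} immediately yields $x\cdot a=\sigma(a)$ for every $a\in Q_1$, which is the second assertion. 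So it remains only to prove that $\sigma$ is nilpotent.

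For this, set $W=\kk Q_0\oplus \kk Q_1$ and let $\rho\colon W\to W$ be the linear operator $v\mapsto g\cdot v$. Because $g$ is grouplike and invertible in $\uqb$ and acts by an algebra automorphism permuting the vertices and preserving path length, $\rho$ is an invertible operator on the finite-dimensional space $W$, with inverse the $g^{-1}$-action. I claim the operator identity $\sigma\rho=q^{-1}\rho\sigma$ holds on all of $W$: on $\kk Q_0$ both sides vanish by $(\sigma 1)$ (using that $\rho$ preserves $\kk Q_0$), while on $\kk Q_1$ the identity is precisely property $(\sigma 3)$. Rearranging gives $\rho\sigma\rho^{-1}=q\sigma$, so $\sigma$ and $q\sigma$ are similar endomorphisms of $W$.

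Finally I would extract nilpotency from this similarity. Passing to the algebraic closure $\overline{\kk}$ does not affect nilpotency, and over $\overline{\kk}$ similar operators share the same multiset of eigenvalues, while the eigenvalues of $q\sigma$ are exactly $q$ times those of $\sigma$. Hence the eigenvalue multiset of $\sigma$ is invariant under multiplication by $q$. If $\sigma$ had a nonzero eigenvalue $\lambda$, then $\lambda, q\lambda, q^2\lambda,\dots$ would all be eigenvalues and pairwise distinct (as $q$ is not a root of unity), contradicting $\dim_{\kk}W<\infty$. Therefore $0$ is the only eigenvalue of $\sigma$, i.e.\ $\sigma$ is nilpotent. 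The one step I would flag is this last argument: the tempting shortcut via traces, deducing $\operatorname{tr}(\sigma^k)=q^k\operatorname{tr}(\sigma^k)$ and hence $\operatorname{tr}(\sigma^k)=0$, proves nilpotency only in characteristic $0$, so to respect the paper's generality on $\kk$ I would use the characteristic-free eigenvalue argument above.
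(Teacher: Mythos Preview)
Your proof is correct. The first part (vanishing of the $\gamma_i$ and hence $x\cdot a=\sigma(a)$) is identical to the paper's argument.

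For nilpotency, your route is a minor repackaging of the paper's, with the same underlying input $(\sigma 3)$ but organised around a different operator. The paper passes to $\overline{\kk}$, decomposes $\kk Q_1$ into \emph{generalized eigenspaces of $g$}, and observes that $(\sigma 3)$ forces $\sigma^n((\kk Q_1)_\lambda)\subseteq (\kk Q_1)_{q^n\lambda}$; since the nonzero $q^n\lambda$ are pairwise distinct and $\kk Q_1$ is finite dimensional, these spaces are eventually zero. You instead rewrite $(\sigma 3)$ as the similarity $\rho\sigma\rho^{-1}=q\sigma$ and argue with the \emph{eigenvalues of $\sigma$} directly: the eigenvalue multiset is $q$-stable, so a nonzero eigenvalue would produce infinitely many. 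Your version is slightly slicker in that it never needs the generalized eigenspace decomposition; the paper's version makes more transparent \emph{where} $\sigma$ sends vectors, which is conceptually in line with later constructions in the paper. Your caution about avoiding the trace shortcut in positive characteristic is well placed.
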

\begin{proof}
From Corollary \ref{cor:uqbonvertices1}, we know that $x\cdot e_i=0$ (i.e. $\gamma_i=0$) for all $i \in Q_0$, so that $x\cdot a = \sigma(a)$ by \eqref{eq:xonarrow}.
To show $\sigma$ is nilpotent, we can pass to an algebraic closure of $\kk$; to simplify the notation, we just assume without loss of generality that $\kk$ is algebraically closed.  For any $\lambda \in \kk$, consider the generalized eigenspace
\begin{equation}
(\kk Q_1)_\lambda=\setst{a \in \kk Q_1}{ (g - \lambda\,1)^M\cdot a  =0 \text{ for some }M\gg 0 },
\end{equation}
noting that repeated application of property $(\sigma3)$ gives $\sigma^n((\kk Q_1)_\lambda) \subseteq (\kk Q_1)_{q^{n} \lambda}$ for $n \geq 0$.
Considering the decomposition into generalized eigenspaces,
$$\kk Q_1 = \bigoplus_{\lambda \in \kk} (\kk Q_1)_\lambda,$$
it enough to show that each $(\kk Q_1)_\lambda$ is annihilated by some power of $\sigma$.
If $\lambda=0$ we are done, and if $\lambda \neq 0$, we get $(\kk Q_1)_{q^{n} \lambda} \cap (\kk Q_1)_{q^{m} \lambda}=0$ for $n \neq m$ since $q$ is not a root of unity.  Thus for fixed $\lambda$, we must have that $(\kk Q_1)_{q^{n} \lambda}=0$ for $n \gg 0$, since $\kk Q_1$ is finite dimensional. This shows that $\sigma$ is nilpotent.
\end{proof}

We note that the case of $r=n$ in the following corollary was first proven in the Ph.D. thesis of Ana Berrizbeitia \cite{Berrizbeitia18}.

\begin{corollary}\label{cor:uqbonkQ2}
Suppose $q$ is a primitive $r^{\rm th}$ root of unity.  In this case, an action of $\uqb$ on $\kk Q$ factors through a generalized Taft algebra $T(r,n)$ if and only if the following hold.
\begin{enumerate}
\item The action on $\kk Q_0$ factors through $T(r,n)$.
\item For all $a \in Q_1$ we have
\begin{equation}\label{eq:uqbTaft}
\gamma_{sa}^r\, g^r\cdot a - \gamma_{ta}^r a = \sigma^r(a).
\end{equation}
\item The element $g^n$ acts as the identity on all of $\kk Q$. 
\end{enumerate}
\end{corollary}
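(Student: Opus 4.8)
The statement characterizes when a $\uqb$-action on $\kk Q$ factors through $T(r,n) = \uqb/\langle g^n - 1, x^r\rangle$. Since Theorem \ref{thm:uqbonkQ} tells us every such action is determined by the $\uqb$-module structure on $\kk Q_0 \oplus \kk Q_1$, the action factors through $T(r,n)$ if and only if the two relations $g^n = 1$ and $x^r = 0$ act as zero on $\kk Q_0 \oplus \kk Q_1$. The plan is to translate each of these two conditions into the stated data. Condition (3) is exactly the requirement that $g^n$ act as the identity; I would note that by Lemma \ref{lem:gactionvertices} it suffices to check this on $\kk Q_0 \oplus \kk Q_1$, and since the generators of $\kk Q$ all lie there, $g^n = 1$ on $\kk Q$ is equivalent to $g^n = 1$ on $\kk Q_0 \oplus \kk Q_1$. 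So the real content is showing that $x^r = 0$ on $\kk Q$ is equivalent to conditions (1) and (2).

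\textbf{Reducing $x^r = 0$ to the generators.} First I would argue that $x^r$ acts as zero on all of $\kk Q$ if and only if it acts as zero on $\kk Q_0 \oplus \kk Q_1$. One direction is trivial. For the converse, I would use that $x^r$ is itself $(1, g^r)$-skew-primitive-like: more precisely, since $q$ is a primitive $r$-th root of unity, the quantum binomial vanishing \eqref{eq:binomvanish} applied via Proposition \ref{prop:deltalxk} gives $\Delta(x^r) = 1 \otimes x^r + x^r \otimes g^r$, exactly as in the computation \eqref{eq:xdab}. Thus $x^r$ acts as a $(1, g^r)$-skew derivation, and a skew derivation that annihilates a generating set of algebra generators (here $\kk Q_0 \cup \kk Q_1$) annihilates the whole algebra. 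This is the step that makes the criterion depend only on the two pieces (1) and (2).

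\textbf{Translating $x^r = 0$ on $\kk Q_0$ and on $\kk Q_1$.} On $\kk Q_0$, the computation \eqref{eq:xdei} already shows $x^r \cdot e_i = \gamma_i^r(e_i - e_{g^r \cdot i})$, and the condition that this vanishes for all $i$ is precisely the statement, proven in Corollary \ref{cor:uqbonvertices2}, that the $\kk Q_0$-action factors through $T(r,n)$; this gives condition (1). On $\kk Q_1$, I would compute $x^r \cdot a$ directly using $\Delta(x^r) = 1 \otimes x^r + x^r \otimes g^r$ together with the decomposition $a = e_{sa} a = a e_{ta}$ and the formula \eqref{eq:xonarrow} for $x \cdot a$. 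The key point is that $x^r$ acts on $a$ by the skew-Leibniz rule, splitting into a "boundary" contribution from the vertex scalars $\gamma_{sa}, \gamma_{ta}$ and an "interior" contribution $\sigma^r(a)$; collecting terms should yield $x^r \cdot a = \gamma_{ta}^r\, a - \gamma_{sa}^r\, g^r \cdot a + \sigma^r(a)$, so that vanishing is exactly \eqref{eq:uqbTaft}.

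\textbf{Main obstacle.} The bookkeeping in the $\kk Q_1$ computation is where the care is needed: one must expand $x^r \cdot a$ either by iterating \eqref{eq:xonarrow} $r$ times or by applying $\Delta^{r-1}(x^r)$ and tracking how the three terms $\gamma_{ta} a$, $-\gamma_{sa} q^{-1}(g \cdot a)$, and $\sigma(a)$ recombine. The potential subtlety is cross-terms mixing $\gamma$'s with $\sigma$; I expect these to cancel or telescope thanks to property $(\sigma 3)$ relating $\sigma(g \cdot a)$ to $g \cdot \sigma(a)$ together with the scaling \eqref{eq:qgamma} of the $\gamma_i$, in the same way that the vertex computation \eqref{eq:xdei} collapsed to a clean binomial identity via the root-of-unity vanishing. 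Verifying that only the three pure terms survive is the heart of the argument; once that identity is established, the equivalence with \eqref{eq:uqbTaft} is immediate, and combining with conditions (1) and (3) completes the proof.
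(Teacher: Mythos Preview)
Your overall strategy matches the paper: reduce to checking $g^n=1$ and $x^r=0$ on $\kk Q_0 \oplus \kk Q_1$ via the skew-primitivity of $x^r$, handle vertices via \eqref{eq:xdei}, and show $x^r\cdot a = \gamma_{ta}^r a - \gamma_{sa}^r\, g^r\cdot a + \sigma^r(a)$ on arrows. The one substantive difference is how the cross-terms in that last computation are eliminated. The paper does not attempt a direct cancellation: instead it first applies $\Delta(x^r)=1\otimes x^r + x^r\otimes g^r$ to the relations $a=e_{sa}a$ and $a=ae_{ta}$ and, invoking condition~(1) (via Corollary~\ref{cor:uqbonvertices2}), concludes that $x^r\cdot a \in e_{sa}(\kk Q_0\oplus\kk Q_1)e_{ta}$. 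Each mixed term $\sigma^i(g^j\cdot a)$ with $i+j\leq r$ in the iterated expansion of $x^r\cdot a$ carries a positive power of some $\gamma$, and the orbit dichotomy of Corollary~\ref{cor:uqbonvertices2} forces either that $\gamma$ to vanish or the term to have source $g^j\cdot sa\neq sa$ or target $g^{i+j}\cdot ta\neq ta$; either way it contributes nothing to the already-known support of $x^r\cdot a$, leaving only the pairs $(i,j)\in\{(0,0),(r,0),(0,r)\}$. Your expectation of algebraic cancellation is in fact also valid---writing the $x$-action as a sum of three operators $\rho_t\colon a\mapsto\gamma_{ta}a$, $-q^{-1}G\rho_s\colon a\mapsto -\gamma_{sa}q^{-1}(g\cdot a)$, and $\sigma$, one checks from $(\sigma2)$, $(\sigma3)$, and \eqref{eq:qgamma} that these pairwise $q$-commute in a way that allows two applications of the $q$-binomial theorem to annihilate all cross-terms when $q^r=1$---but you leave this as the acknowledged obstacle, whereas the paper's projection argument sidesteps that bookkeeping at the cost of using condition~(1) inside the arrow computation rather than treating it as an independent hypothesis.
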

\begin{proof} 
The same reasoning used in \eqref{eq:xdab}, applied to the relation $a=e_{sa}a$, gives 
\begin{equation}
x^r\cdot a = (x^r \cdot e_{sa})(g^r \cdot a) + e_{sa}(x^r \cdot a).
\end{equation}
Note, if the action of $\uqb$ on $\kk Q$ factors through $T(r,n)$, the action of $\kk Q_0$ factors through $T(r,n)$, and therefore Corollary \ref{cor:uqbonvertices2} holds in either direction of the corollary. Thus, we can substitute for $x^r \cdot e_{sa}$ using \eqref{eq:xdei} to get
\[
x^r\cdot a = \gamma_{sa}^r e_{sa}(g^r \cdot a) - \gamma_{sa}^r e_{g^r\cdot sa}(g^r \cdot a) + e_{sa}(x^r \cdot a) = e_{sa}(x^r\cdot a), 
\]
where the second equality uses that either $\gamma_{sa}=0$ or $sa = g^r\cdot sa (=s(g^r \cdot sa))$ to get the first two terms to vanish or cancel.  A similar computation shows that $x^r \cdot a = (x^r \cdot a)e_{ta}$.

Knowing now that $x^r \cdot a = e_{sa} (x^r \cdot a)e_{ta}$, we can explicitly calculate it.
We see that the sum resulting from $r$ applications of the formula \eqref{eq:xonarrow} results in a linear combination of terms of the form $\sigma^i (g^j \cdot a)$ with $i, j \in \ZZ_{\geq 0}$ and $i+j \leq r$ where the scalars contain positive powers of $\gamma_{sa}$ or $\gamma_{ta}$ for all but the $\sigma^r(a)$ term. 

For any term with a positive power of $\gamma_{sa}$ and $j<r$, we have $s(\sigma^i(g^j\cdot a)) = s(g^j \cdot a)$. Then, by Corollary \ref{cor:uqbonvertices2}, either $\gamma_{sa} = 0$ or $s(g^j \cdot a)\neq sa$. For any terms containing only positive powers of $\gamma_{ta}$ and no positive powers of $\gamma_{sa}$, we have that $i+j < r$. Since $t(\sigma^i(g^j \cdot a) = t(g^{i+j}\cdot a)$, Corollary \ref{cor:uqbonvertices2} implies that either $\gamma_{ta} = 0$ or $t(g^{i+j}\cdot a) \neq ta$. 

Since $x^r \cdot a \in e_{sa}\kk Q_1 e_{ta}$, this reduces us to consideration of the pairs $(0,0),\ (r,0),\ (0,r)$.
The coefficients of the corresponding terms are computed using the same method as in \eqref{eq:xdei} to get
\begin{equation}
x^r\cdot a = \gamma_{ta}^r a - \gamma_{sa}^r g^r\cdot a + \sigma^r(a),
\end{equation}
which shows that the action factors through $T(r,n)$ if and only if  \eqref{eq:uqbTaft} holds.
\end{proof}

\subsection{Actions of $\Uqsl$ and $\uqsl$ on path algebras}
We again start with parametrizing actions of $\Uqsl$ and $\uqsl$ on quivers without arrows.
We let $G=\langle K\rangle$.

\begin{proposition}
\label{prop:uqslonvertices}
Let $Q_0$ be the vertex set of a quiver.  

\noindent (A) The following data determines a Hopf action of $\Uqsl$ on $\kk Q_0$.
\begin{enumerate}[(i)]
\item A permutation action of $G$ on the set $Q_0$;
\item Two collection of scalars $(\gamma_i^E \in \kk)_{i \in Q_0}$ and $(\gamma_i^F \in \kk)_{i \in Q_0}$ such that
\begin{equation}\label{eq:qgammaEF}
\gamma^E_{K\cdot i} = q^{-2} \gamma^E_i \quad \text{and} \quad \gamma^F_{K\cdot i} = q^{2} \gamma^F_i \quad \forall i \in Q_0.
\end{equation}
For each $i \in Q_0$ such that $K^2 \cdot i \neq i$, these scalars must furthermore satisfy
\begin{equation}\label{eq:gammaEFcondition}
\gamma^E_i\gamma^F_i =\frac{-q}{(1-q^2)^2}.
\end{equation}
\end{enumerate}
The action is given by
\begin{equation} \label{eq:EFonvertex}
\begin{split}
E \cdot e_i &= \gamma^E_i e_i - \gamma^E_i q^{-2} e_{K\cdot i} \quad {\text{ for all $i \in Q_0$.}}\\
F \cdot e_i &= \gamma^F_i e_{K^{-1}\cdot i} - \gamma^F_i q^{2} e_{i} \quad {\text{ for all $i \in Q_0$.}}
\end{split}
\end{equation}

\noindent (B) Every action of $\Uqsl$ on $\kk Q_0$ is of the form above.
\end{proposition}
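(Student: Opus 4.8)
```latex
The plan is to follow the same two-part structure as the proof of Proposition \ref{prop:uqbonvertices}, exploiting the fact that $\Uqsl$ contains two copies of a quantum Borel. By the isomorphisms \eqref{eq:borels}, the subalgebra $\langle E, K\rangle$ is isomorphic to $U_{q^2}(\fb)$ with $K \leftrightarrow g$ and $E \leftrightarrow x$. So for part (A), I would first verify that the data in (i)--(ii) makes $\kk Q_0$ into a $\Uqsl$-module and check the Hopf axioms for $E$ and $F$ separately using Proposition \ref{prop:uqbonvertices} directly: the $E$-action formula \eqref{eq:EFonvertex} matches \eqref{eq:xonvertex} under the parameter substitution $q \mapsto q^2$, $\gamma_i \mapsto \gamma^E_i$, so the conditions \eqref{eq:qgammaEF} on $\gamma^E_i$ are exactly the translated condition \eqref{eq:qgamma}. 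A symmetric argument handles the $F$-action, noting $F \leftrightarrow g^{-1}x$ forces the companion Borel to use $q^{-2}$ and explains the opposite sign in the exponent of \eqref{eq:qgammaEF}.

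The genuinely new ingredient, which has no analogue in the single-Borel case, is the commutator relation $[E,F]=\frac{K-K^{-1}}{q-q^{-1}}$ from \eqref{eq:uqslrelations}; this is what produces the constraint \eqref{eq:gammaEFcondition}. I expect this to be the main obstacle. I would compute $(EF - FE)\cdot e_i$ directly by composing the two formulas in \eqref{eq:EFonvertex} and expanding, then compare the result against $\frac{1}{q-q^{-1}}(K - K^{-1})\cdot e_i = \frac{1}{q-q^{-1}}(e_{K^{-1}\cdot i} - e_{K\cdot i})$. Carrying this out, the left side lands in the span of $e_{K^{-1}\cdot i}$, $e_i$, and $e_{K\cdot i}$ with coefficients built from $\gamma^E, \gamma^F$ and powers of $q$ (using \eqref{eq:qgammaEF} to rewrite $\gamma^E_{K\cdot i}$ and $\gamma^F_{K\cdot i}$ in terms of $\gamma^E_i, \gamma^F_i$). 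Matching coefficients of $e_{K^{-1}\cdot i}$ and $e_{K\cdot i}$ should yield, after simplification, precisely the equation $\gamma^E_i \gamma^F_i = \frac{-q}{(1-q^2)^2}$; the $e_i$-coefficient should match automatically. The hypothesis $K^2\cdot i \neq i$ in \eqref{eq:gammaEFcondition} is needed exactly so that $e_{K^{-1}\cdot i}, e_i, e_{K\cdot i}$ are genuinely distinct idempotents and coefficient-matching is valid; when $K^2\cdot i = i$, degeneracies force some $\gamma$'s to vanish and the relation becomes vacuous, a case I would handle separately.

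For part (B), I would take an arbitrary action of $\Uqsl$ on $\kk Q_0$. Restricting to the subalgebra $\langle E, K\rangle \cong U_{q^2}(\fb)$ and applying part (B) of Proposition \ref{prop:uqbonvertices} immediately gives that $E$ acts by the first formula in \eqref{eq:EFonvertex} for scalars $\gamma^E_i$ satisfying the $E$-part of \eqref{eq:qgammaEF}; restricting to $\langle F, K\rangle \cong U_{q^{-2}}(\fb)$ similarly produces the $F$-formula and the $F$-part of \eqref{eq:qgammaEF}. It then remains only to show that any genuine $\Uqsl$-action must additionally satisfy \eqref{eq:gammaEFcondition}, which follows because the commutator relation holds in $\Uqsl$: applying $[E,F] = \frac{K-K^{-1}}{q-q^{-1}}$ to each $e_i$ and repeating the coefficient comparison from part (A) forces the product $\gamma^E_i\gamma^F_i$ to take the prescribed value whenever $K^2\cdot i \neq i$. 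This shows every action arises from the described data, completing the proof. The only care needed is bookkeeping the $q$-exponent translations ($q \mapsto q^{\pm 2}$) consistently and correctly tracking signs through the commutator expansion.
```
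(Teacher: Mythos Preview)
Your proposal is correct and follows essentially the same approach as the paper: reduce to the two Borel subalgebras via \eqref{eq:borels} and Proposition \ref{prop:uqbonvertices}, then verify the commutator relation $[E,F]=\frac{K-K^{-1}}{q-q^{-1}}$ on each $e_i$ to extract \eqref{eq:gammaEFcondition}. Two minor corrections: you have a sign slip in $(K-K^{-1})\cdot e_i$ (with the paper's convention $K\cdot e_i = e_{K\cdot i}$ it equals $e_{K\cdot i} - e_{K^{-1}\cdot i}$, not the reverse), and when $K^2\cdot i = i$ the relation is vacuous simply because $e_{K\cdot i}-e_{K^{-1}\cdot i}=0$ on both sides, not because the $\gamma$'s are forced to vanish.
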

\begin{proof}
(A)
The isomorphisms \eqref{eq:borels} along with Proposition \ref{prop:uqbonvertices} show that the given data define actions of the subalgebras $\langle K,E\rangle$ and $\langle K, F\rangle$ on $\kk Q_0$.
These subalgebras together generate $\Uqsl$,
and the only compatibility condition for such a pair of actions to define an action of $\Uqsl$ is that the rightmost relation of \eqref{eq:uqslrelations} holds.
It can be directly computed using Proposition \ref{prop:uqbonvertices} that
\begin{equation}
(EF -FE)\cdot e_i = \gamma_i^E\gamma_i^F(1-q^2)(e_{K\cdot i}-e_{K^{-1}\cdot i}),
\end{equation}
which must be equal to $(q-q^{-1})^{-1}(e_{K\cdot i} - e_{K^{-1}\cdot i})$.  
If $K^2\cdot i=i$ then this condition is vacuous.  If not, then equality of these two expressions is equivalent the condition \eqref{eq:gammaEFcondition}.

\noindent (B)  This is immediate from Proposition \ref{prop:uqbonvertices}(B) along with the observations above.
\end{proof}

\begin{remark}
We note that for actions in the above Proposition where $K^2$ does not fix any vertices, equation \eqref{eq:gammaEFcondition} implies that
each of $\gamma_i^E, \gamma_i^F$ is nonzero and determines the other, so the action is completely determined by its restriction to a quantum Borel, an analogue of a result in \cite{MS}.  It may also be interesting to compare the present work to actions of $\Uqsl$ and related algebras constructed in \cite{MS90}.
\end{remark}

\begin{corollary}\label{cor:uqslonvertices1}
If $q$ is not a root of unity, then every Hopf action of $\Uqsl$ on $\kk Q_0$ factors through $\Uqsl/\langle K^2-1,E,F\rangle \simeq \kk(\ZZ/2\ZZ)$.
\end{corollary}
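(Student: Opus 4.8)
The plan is to mimic the proof of Corollary \ref{cor:uqbonvertices1}, but with attention to the compatibility constraint \eqref{eq:gammaEFcondition} that is new to the $\Uqsl$ setting. By Proposition \ref{prop:uqslonvertices}, any action of $\Uqsl$ on $\kk Q_0$ is determined by a permutation action of $G=\langle K\rangle$ on $Q_0$ together with scalars $\gamma^E_i,\gamma^F_i$ satisfying the twisting relations \eqref{eq:qgammaEF} and, whenever $K^2\cdot i\neq i$, the product constraint \eqref{eq:gammaEFcondition}. So it suffices to show that when $q$ is not a root of unity, these data force $E$ and $F$ to act as zero and $K^2$ to act as the identity.

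First I would argue, exactly as in Corollary \ref{cor:uqbonvertices1}, that all the scalars vanish. Since $Q_0$ is finite, each orbit $G\cdot i$ is finite; writing $m=\#(G\cdot i)$ so that $K^m\cdot i=i$, iterating \eqref{eq:qgammaEF} yields $\gamma^E_i=q^{-2m}\gamma^E_i$ and $\gamma^F_i=q^{2m}\gamma^F_i$. Because $q$ is not a root of unity we have $q^{2m}\neq 1$, forcing $\gamma^E_i=\gamma^F_i=0$ for every $i$.

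The crucial step, and the one place where the argument genuinely differs from the Borel case, is to rule out the possibility $K^2\cdot i\neq i$. If this held for some $i$, then \eqref{eq:gammaEFcondition} would require $\gamma^E_i\gamma^F_i=-q/(1-q^2)^2$, which is nonzero since $q\neq 0,\pm 1$; this contradicts $\gamma^E_i=0$ just established. Hence $K^2\cdot i=i$ for all $i\in Q_0$, so $K^2$ acts as the identity on $\kk Q_0$. Together with the vanishing of all $\gamma^E_i,\gamma^F_i$, formula \eqref{eq:EFonvertex} shows that $E$ and $F$ act as zero, so the action factors through $\Uqsl/\langle K^2-1,E,F\rangle$.

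To finish, I would identify this quotient: setting $E=F=0$ collapses the defining relations of \eqref{eq:uqslrelations} (indeed the commutator relation forces $K=K^{-1}$, hence $K^2=1$, on its own), leaving $\kk[K]/(K^2-1)\cong \kk(\ZZ/2\ZZ)$. The main obstacle is really just recognizing that \eqref{eq:gammaEFcondition} converts the vanishing of the $\gamma$'s into the geometric statement that $K^2$ fixes every vertex; everything else is a direct transcription of the non-root-of-unity argument for the Borel.
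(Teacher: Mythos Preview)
Your proposal is correct and follows essentially the same approach as the paper: the paper invokes the Borel isomorphisms \eqref{eq:borels} together with Corollary \ref{cor:uqbonvertices1} to conclude $\gamma_i^E=\gamma_i^F=0$ (you reprove this directly from \eqref{eq:qgammaEF}, which is exactly what that corollary's proof does), and then both you and the paper use \eqref{eq:gammaEFcondition} to force $K^2\cdot i=i$ for every vertex. Your final paragraph identifying the quotient is a nice extra detail that the paper leaves implicit in the statement.
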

\begin{proof}
From the isomorphisms \eqref{eq:borels} and Corollary \ref{cor:uqbonvertices1}, we see that $\gamma_i^E=\gamma_i^F=0$ for all $i \in Q_0$, so the action factors through $\Uqsl/\langle K^n-1,E,F\rangle$ for some $n \in \ZZ^+$.  But then \eqref{eq:gammaEFcondition} can never be satisfied since the right hand side is nonzero, so $K^2\cdot i =i$ for all $i \in Q_0$.
\end{proof}

\begin{corollary}\label{cor:uqslonvertices2}
Let $n=|q|$ and suppose $n>2$ and odd, and we have an action of $\Uqsl$ on $\kk Q_0$.
Then for all $i \in Q_0$, we have either $\#(G\cdot i) = 1, 2$, or is divisible by $n$. Such an action factors through $\uqsl$ if and only if every $G$-orbit on $Q_0$ has 1 or n vertices.

\end{corollary}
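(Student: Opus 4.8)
The plan is to treat the two assertions separately, reducing the factorization criterion to the Borel computations already established for $\uqb$. For the orbit-size dichotomy, I would iterate the scaling relation \eqref{eq:qgammaEF} around a single $G$-orbit. Writing $m = \#(G\cdot i)$ and applying $\gamma^E_{K\cdot j} = q^{-2}\gamma^E_j$ successively $m$ times returns to $\gamma^E_i$, yielding $\gamma^E_i = q^{-2m}\gamma^E_i$. The crucial point is that for $m > 2$ we have $K^2\cdot i \neq i$, so condition \eqref{eq:gammaEFcondition} forces $\gamma^E_i \neq 0$; hence $q^{2m}=1$, and since $q$ has odd order $n$ this gives $n \mid m$. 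Orbits of size $1$ or $2$ are precisely the cases in which \eqref{eq:gammaEFcondition} is vacuous, so no further constraint arises and the dichotomy $\#(G\cdot i)\in\{1,2\}$ or $n\mid\#(G\cdot i)$ follows.

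For the factorization criterion, I would first observe that the action factors through $\uqsl$ exactly when $K^n$ acts as the identity and $E^n, F^n$ act as zero: the defining ideal is generated by $K^n-1,\,E^n,\,F^n$, and a two-sided ideal acts as zero as soon as its generators do. Next I invoke the isomorphisms \eqref{eq:borels}, under which the restrictions to $\langle E,K\rangle$ and $\langle F,K\rangle$ become actions of $U_{q^2}(\fb)$ and $U_{q^{-2}}(\fb)$; because $n$ is odd, both $q^2$ and $q^{-2}$ are again primitive $n^{\rm th}$ roots of unity, so Corollary \ref{cor:uqbonvertices2} applies with $r=n$ to each Borel. This yields $E^n\cdot e_i = (\gamma^E_i)^n(e_i - e_{K^n\cdot i})$ and an entirely analogous expression for $F^n\cdot e_i$, each vanishing if and only if the relevant scalar is zero or $K^n\cdot i = i$.

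The forward direction is then immediate and, notably, uses only $K^n=1$ together with the first assertion: $K^n=1$ forces every orbit size to divide $n$, and since $n$ is odd this excludes size $2$, so the dichotomy collapses to sizes $1$ and $n$. For the converse, if every orbit has size $1$ or $n$, then all orbit sizes divide $n$, giving $K^n=1$; on a size-$1$ orbit the $m=1$ instance of \eqref{eq:qgammaEF} forces $\gamma^E_i=\gamma^F_i=0$, while on a size-$n$ orbit we have $K^n\cdot i=i$, so in either case the formulas above give $E^n\cdot e_i = F^n\cdot e_i = 0$. The only delicate bookkeeping I anticipate is transporting Corollary \ref{cor:uqbonvertices2} through the twisted identification $F\leftrightarrow g^{-1}x$ of the second Borel in \eqref{eq:borels}; since only the vanishing locus of $F^n\cdot e_i$ matters, the precise scalar there may be suppressed. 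The conceptual heart is the first assertion, where the genuinely $\Uqsl$-specific constraint \eqref{eq:gammaEFcondition} excludes arbitrary orbit sizes, and its interaction with the parity of $n$ is exactly what forces the final answer $\{1,n\}$.
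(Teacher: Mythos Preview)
Your proposal is correct and follows essentially the same approach as the paper: both arguments reduce to the Borel subalgebras via \eqref{eq:borels}, use the first statement of Corollary~\ref{cor:uqbonvertices2} (or equivalently the direct iteration of \eqref{eq:qgammaEF} you carry out) together with the nonvanishing of the right side of \eqref{eq:gammaEFcondition} for the orbit-size dichotomy, and then invoke the Taft factorization criterion of Corollary~\ref{cor:uqbonvertices2} combined with the parity of $n$ for the second assertion. The only cosmetic difference is that you unpack the explicit formula $E^n\cdot e_i = (\gamma^E_i)^n(e_i - e_{K^n\cdot i})$ from \eqref{eq:xdei}, whereas the paper cites the abstract criterion of Corollary~\ref{cor:uqbonvertices2} directly.
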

\begin{proof}
Since $n$ is odd, $|q^2|=n$ as well.
Applying Corollary \ref{cor:uqbonvertices2} to the restricted actions of $\langle E, K \rangle \simeq U_{q^2}(\fb)$ and $\langle F, K \rangle \simeq U_{q^{-2}}(\fb)$ from \eqref{eq:borels}, 
we see that for each $i \in Q_0$, either $n$ divides $\#(G\cdot i)$ or $\gamma_i ^E=\gamma_i^F = 0$.  
If $\gamma_i ^E=\gamma_i^F = 0$, then \eqref{eq:gammaEFcondition} can never be satisfied since the right hand side is nonzero, so we have $K^2\cdot i =i$ in this case.  This proves the first statement.

Now the $\Uqsl$ action factors through $\uqsl$ if and only if the restricted actions of the subalgebras $\langle K, E\rangle$ and $\langle K, F\rangle$ factor through $\langle K, E\rangle/\langle K^n-1, E^n\rangle$ and $\langle K, F\rangle/\langle K^n-1, F^n\rangle$, which are each isomorphic to the Taft algebra $T(n)$.  Corollary \ref{cor:uqbonvertices2} tells us that this happens if and only if $\#(G\cdot i) \mid n$ and $\#(G\cdot i) =n$ when $\gamma_i^E\neq 0$ or $\gamma_i^F\neq 0$.  Using the first part of this corollary, $\#(G\cdot i) \mid n$ is equivalent to $\#(G\cdot i)=1$ or $n$ (since $n$ is odd). For the second condition, if $\gamma_i^E\neq 0$ or $\gamma_i^F\neq 0$ then $\#(G\cdot i)\geq n > 2$ again by Corollary \ref{cor:uqbonvertices2}, so this $\#(G\cdot i)=n$ by the first part of this corollary, and thus this condition is automatically satisfied.
\end{proof}

We now proceed to parametrize actions of $\Uqsl$ and $\uqsl$ on arbitrary quivers.

\begin{theorem}
\label{thm:UqslonkQ}
Let $Q$ be a quiver.
\noindent (A) The following data determines a Hopf action of $\Uqsl$ on $\kk Q$.
\begin{enumerate}[(i)]
\item A Hopf action of $\Uqsl$ on $\kk Q_0$ (as in Proposition \ref{prop:uqslonvertices});
\item a representation of $G$ on $\kk Q_1$ satisfying $s(K\cdot a) = K\cdot sa$ and $t(K\cdot a) = K\cdot ta$ for all $a \in Q_1$;
\item a pair of $\kk$-linear endomorphisms $\sigma^E, \sigma^F: \kk Q_0 \oplus \kk Q_1 \to \kk Q_0 \oplus \kk Q_1$
satisfying
\begin{enumerate}[($\sigma$1)]
\item $\sigma^\bullet(\kk Q_0)=0$ for $\bullet \in \{E, F\}$;
\item $\sigma^\bullet(a) = e_{sa}\sigma^\bullet(a)e_{K\cdot ta}$ for $\bullet \in \{E, F\}$ and all $a \in Q_1$;
\item $\sigma^E(K\cdot a) = q^{-2} K\cdot \sigma^E(a)$ and $\sigma^F(K\cdot a) = q^{2} K\cdot \sigma^F(a)$ for all $a \in Q_1$;
\item $\gamma^E_{sa}\gamma^F_{sa}(1-q^2)K^2\cdot a -\gamma^E_{ta}\gamma^F_{ta}(1-q^2)a+q^2\sigma^E(\sigma^F(a)) - \sigma^F(\sigma^E(a))\\ = (q-q^{-1})^{-1}(K^2\cdot a - a)$ for all $a \in Q_1$.
\end{enumerate}
\end{enumerate}
The action is given on $a \in Q_1$ by
\begin{equation} \label{eq:EFonarrow}
\begin{split}
E \cdot a &= \gamma^E_{ta} a - \gamma^E_{sa}q^{-2}(K\cdot a) + \sigma^E(a)\\
F \cdot a &= \gamma^F_{ta} (K^{-1}\cdot a) - \gamma^F_{sa}q^{2}a + K^{-1}\cdot \sigma^F(a)\\
\end{split}
\end{equation}

\noindent (B) Furthermore, every (filtered) action of $\Uqsl$ on $\kk Q$ with $Q$ connected is of the form above.
\end{theorem}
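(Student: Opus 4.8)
The plan is to reduce everything to the quantum Borel case already settled in Theorem \ref{thm:uqbonkQ}, exploiting the two embeddings of \eqref{eq:borels}. Concretely, $\langle E,K\rangle\simeq U_{q^2}(\fb)$ with $K\leftrightarrow g$, $E\leftrightarrow x$, so Theorem \ref{thm:uqbonkQ} applied with parameter $q^2$ produces exactly the first line of \eqref{eq:EFonarrow} together with conditions $(\sigma1)$--$(\sigma3)$ for $\sigma^E$. For $F$ the relevant skew-primitive generator is $KF$, since $\Delta(KF)=1\otimes KF+KF\otimes K$ and $K(KF)K^{-1}=q^{-2}(KF)$, so that $\langle KF,K\rangle\simeq U_{q^{-2}}(\fb)$. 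Applying Theorem \ref{thm:uqbonkQ} with parameter $q^{-2}$ gives $(KF)\cdot a=\gamma^F_{ta}a-\gamma^F_{sa}q^2(K\cdot a)+\sigma^F(a)$, and then $F\cdot a=K^{-1}\cdot((KF)\cdot a)$ yields the second line of \eqref{eq:EFonarrow}; the leftover $K^{-1}$ is precisely the source of the $K^{-1}\cdot\sigma^F(a)$ term and of the $q^2$-twist in $(\sigma3)$ for $\sigma^F$. One checks that $F=K^{-1}(KF)$ then acts as a $(K^{-1},1)$-skew derivation, as demanded by $\Delta(F)$.

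For part (A), the two Borel actions make $\kk Q$ a module algebra over each of $\langle E,K\rangle$ and $\langle F,K\rangle$; since these generate $\Uqsl$ as an algebra and the two Borel relations already hold, it remains only to impose the commutator relation. The key observation I would use is that the element
\[
T := EF-FE-\frac{K-K^{-1}}{q-q^{-1}}
\]
satisfies $\Delta(T)=K^{-1}\otimes T+T\otimes K$: computing $\Delta(EF)$ and $\Delta(FE)$, the cross terms agree (both equal $q^2\,K^{-1}E\otimes KF$, using $EK^{-1}=q^2K^{-1}E$ and $FK=q^2KF$) and cancel, while $\Delta(C)=K^{-1}\otimes C+C\otimes K$ for $C=\tfrac{K-K^{-1}}{q-q^{-1}}$. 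Hence $T$ acts on $\kk Q$ as a twisted derivation, $T\cdot(pp')=(K^{-1}\cdot p)(T\cdot p')+(T\cdot p)(K\cdot p')$, with $T\cdot 1=\varepsilon(T)=0$. Therefore the commutator relation holds on all of $\kk Q$ as soon as $T$ annihilates the algebra generators $e_i$ and $a$.

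On vertices, $T\cdot e_i=0$ is exactly the content behind Proposition \ref{prop:uqslonvertices}: it is vacuous when $K^2\cdot i=i$ and otherwise equivalent to \eqref{eq:gammaEFcondition}. On arrows, I would compute $K\cdot(T\cdot a)$ (multiplying by the invertible $K$ only to clear the $K^{-1}$-twist), using \eqref{eq:EFonarrow} and $(\sigma3)$. The pure-$\sigma$ contribution is $K^{-1}(q^2\sigma^E\sigma^F-\sigma^F\sigma^E)(a)$, so applying $K$ produces $q^2\sigma^E(\sigma^F(a))-\sigma^F(\sigma^E(a))$ and turns $C\cdot a=(q-q^{-1})^{-1}(K\cdot a-K^{-1}\cdot a)$ into $(q-q^{-1})^{-1}(K^2\cdot a-a)$, while the $\gamma$-terms collect into the two $(1-q^2)$ summands. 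This identifies $T\cdot a=0$ with condition $(\sigma4)$, completing (A). This arrow-level bookkeeping---tracking how the $K^{-1}$-twist and $(\sigma3)$ redistribute the $q^2$ factors---is the main obstacle, and care is needed so that the asymmetry between $q^2\sigma^E\sigma^F$ and $\sigma^F\sigma^E$ emerges exactly as stated.

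For part (B), I would reverse the reductions. Given an arbitrary filtered action of $\Uqsl$ on a connected $\kk Q$, I restrict it to the subalgebras $\langle E,K\rangle$ and $\langle KF,K\rangle$; Theorem \ref{thm:uqbonkQ}(B) applies to each (needing no connectedness) and reconstructs maps $\sigma^E,\sigma^F$ via the formula \eqref{eq:sigmaformula} from that proof, automatically satisfying $(\sigma1)$--$(\sigma3)$, while the underlying vertex action is governed by Proposition \ref{prop:uqslonvertices}(B). Since the given action is a genuine $\Uqsl$-action, $T$ annihilates everything, so its vanishing on vertices and arrows returns \eqref{eq:gammaEFcondition} and $(\sigma4)$. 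The connectedness hypothesis enters when assembling this local data into a single global parametrization: I expect to use it to propagate the rigid constraint \eqref{eq:gammaEFcondition}---and the resulting determination of one Borel's data by the other, cf.\ the remark following Proposition \ref{prop:uqslonvertices}---consistently across $G$-orbits joined by arrows, and I anticipate this consistency check to be the delicate point of the converse direction.
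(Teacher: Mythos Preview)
Your proposal is correct and follows essentially the same route as the paper: reduce to the two Borel subalgebras via \eqref{eq:borels}, invoke Theorem \ref{thm:uqbonkQ} for each, and then check the commutator relation on arrows, applying $K$ at the end to clear the $K^{-1}$-twist and obtain $(\sigma4)$. Your explicit verification that $T=EF-FE-\tfrac{K-K^{-1}}{q-q^{-1}}$ is $(K^{-1},K)$-skew primitive is a nice touch that the paper leaves implicit; the paper simply asserts that the commutator relation is the only remaining compatibility and computes $(EF-FE)\cdot a$ directly, arriving at the same expression \eqref{eq:EFFE}.

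One point where you are overthinking: the connectedness hypothesis in part (B) plays no role in the paper's proof, which is just the two-line observation that restricting to the Borels and invoking Theorem \ref{thm:uqbonkQ}(B) recovers all the data. There is no ``delicate consistency check'' across $G$-orbits joined by arrows; the constraints $(\sigma1)$--$(\sigma4)$ and \eqref{eq:gammaEFcondition} are all local to individual vertices and arrows, and Theorem \ref{thm:uqbonkQ}(B) already handles disconnected quivers. So your anticipated obstacle in (B) does not materialize, and the connectedness hypothesis appears to be superfluous as stated.
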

\begin{proof}
The isomorphisms \eqref{eq:borels} along with Theorem \ref{thm:uqbonkQ}(A) show that the data given in (i), (ii), (iii) but omitting ($\sigma 4$) define actions of the subalgebras $\langle K,E\rangle$ and $\langle K, F\rangle$ via \eqref{eq:EFonarrow} on $\kk Q$, and the restrictions of these actions to $\langle K \rangle$ are the same.
These subalgebras together generate $\Uqsl$, and the only compatibility condition for such a pair of actions to define an action of $\Uqsl$ is that the rightmost relation of \eqref{eq:uqslrelations} holds for the action.
It can be directly computed using Theorem \ref{thm:uqbonkQ} that
\begin{equation}\label{eq:EFFE}
\begin{split}
(EF -FE)\cdot a &= \gamma^E_{sa}\gamma^F_{sa}(1-q^2)K\cdot a -\gamma^E_{ta}\gamma^F_{ta}(1-q^2)K^{-1}\cdot a\\
&+q^2K^{-1}\cdot \sigma^E(\sigma^F(a)) - K^{-1}\cdot \sigma^F (\sigma^E(a)),
\end{split}
\end{equation}
so the given data defines an action of $\Uqsl$ on $\kk Q$ if and only if this is equal to
\begin{equation}
(q-q^{-1})^{-1}(K\cdot a - K^{-1}\cdot a)
\end{equation}
for all $a \in Q_1$. Acting by $K$ on both sides of this equality gives us condition ($\sigma 4$).

\noindent (B)  Every action of $\Uqsl$ on $\kk Q$ restricts to actions of the subalgebras $\langle K,E\rangle$ and $\langle K, F\rangle$, which are of the form stated in (A) by the isomorphisms \eqref{eq:borels} and Theorem \ref{thm:uqbonkQ}(B).
\end{proof}

\begin{remark}\label{rem:orbitsgt2}
If $a \in Q_1$ satisfies $\#(G\cdot sa), \#(G\cdot ta) > 2$, then $(\sigma 4)$ in Theorem \ref{thm:UqslonkQ} simplifies to 
\begin{equation}\label{eq:sigmacommutativity}
q^2\sigma^E\sigma^F(a) = \sigma^F\sigma^E(a).
\end{equation}
This is a direct result of substituting \eqref{eq:gammaEFcondition} in to $(\sigma4)$ and simplifying.
\end{remark}

\begin{corollary}\label{cor:uqslonkQ1}
Suppose $q$ is not a root of unity.  For any action of $\Uqsl$ on $\kk Q$, both the maps $\sigma^E$ and $\sigma^F$ are nilpotent, and $E\cdot a = \sigma^E(a)$ and $F\cdot a = K^{-1}\cdot \sigma^F(a)$ for all $a \in Q_1$.  Furthermore, these maps satisfy
\begin{equation}
q^2 \sigma^E(\sigma^F(a)) - \sigma^F(\sigma^E(a)) = \frac{K^2 \cdot a - a}{q-q^{-1}} \quad \text{for all }a \in Q_1.
\end{equation}
\end{corollary}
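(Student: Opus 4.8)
The plan is to reduce everything to the quantum Borel case via the isomorphisms \eqref{eq:borels} together with Corollary \ref{cor:uqbonkQ1}, and then to read off the commutation relation directly from condition $(\sigma 4)$ of Theorem \ref{thm:UqslonkQ}. First I would apply Corollary \ref{cor:uqslonvertices1}: since $q$ is not a root of unity, the induced action on $\kk Q_0$ has $\gamma^E_i = \gamma^F_i = 0$ for all $i \in Q_0$ (and $K^2$ fixes every vertex). Substituting these vanishing scalars into the action formulas \eqref{eq:EFonarrow} immediately gives $E\cdot a = \sigma^E(a)$ and $F\cdot a = K^{-1}\cdot\sigma^F(a)$ for all $a \in Q_1$, establishing the explicit formulas.

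Next, for the nilpotency assertion, I would restrict the action to the two Borel subalgebras $\langle K, E\rangle \simeq U_{q^2}(\fb)$ and $\langle K, F\rangle \simeq U_{q^{-2}}(\fb)$ coming from \eqref{eq:borels}. Because $q$ is not a root of unity, neither is $q^2$ nor $q^{-2}$, so Corollary \ref{cor:uqbonkQ1} applies to each restricted action and yields nilpotency of its associated map. Under the identification in Theorem \ref{thm:uqbonkQ}, the map $\sigma^E$ is exactly the ``$\sigma$'' attached to the $U_{q^2}(\fb)$-action, and $\sigma^F$ is the ``$\sigma$'' of the $U_{q^{-2}}(\fb)$-action; hence both $\sigma^E$ and $\sigma^F$ are nilpotent.

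Finally, the commutation relation follows by specializing condition $(\sigma 4)$ of Theorem \ref{thm:UqslonkQ}. With $\gamma^E_i = \gamma^F_i = 0$ for all $i$, the first two summands of $(\sigma 4)$ vanish, leaving precisely $q^2\sigma^E(\sigma^F(a)) - \sigma^F(\sigma^E(a)) = (q-q^{-1})^{-1}(K^2\cdot a - a)$ for all $a \in Q_1$, which is the claimed identity. (If $Q$ is not connected one argues component by component, since the action, the maps $\sigma^E,\sigma^F$, and the relation above all respect the decomposition of $\kk Q_1$ over connected components.)

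The step requiring the most care is the identification of $\sigma^F$ with the Borel ``$\sigma$''. In \eqref{eq:borels} the generator $F$ corresponds not to the skew-primitive generator $x$ of $U_{q^{-2}}(\fb)$ but to $g^{-1}x = K^{-1}x$; so one should apply Theorem \ref{thm:uqbonkQ} to the skew-primitive element $x = KF$ and then left-multiply by $K^{-1}$, verifying that the resulting map coincides with $\sigma^F$ and that properties $(\sigma 1)$--$(\sigma 3)$ transfer correctly under this $K^{-1}$-twist. This is routine bookkeeping, but it is the one place where a stray power of $K$ could slip, so I would check it explicitly rather than quote it.
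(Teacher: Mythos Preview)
Your proposal is correct and follows exactly the paper's approach: the paper's proof is the one-line ``This is immediate from Corollary \ref{cor:uqbonkQ1} and Theorem \ref{thm:UqslonkQ},'' and you have simply unpacked that sentence in full detail. Your care about the $\sigma^F$ identification is well placed but already built into the proof of Theorem \ref{thm:UqslonkQ}, which constructs $\sigma^E,\sigma^F$ precisely by transporting the Borel $\sigma$-maps through \eqref{eq:borels}.
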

\begin{proof}
This is immediate from Corollary \ref{cor:uqbonkQ1} and Theorem \ref{thm:UqslonkQ}.
\end{proof}

\begin{corollary}\label{cor:uqslonkQ2}
If $q$ is a primitive $n^{\text{th}}$ root of unity with $n$ odd and $n>2$, the action of $\Uqsl$ on $\kk Q$ factors through $\uqsl$ if and only if the following conditions hold.
\begin{enumerate}
\item $K^n$ acts as the identity on all of $\kk Q$.
\item The action of $\Uqsl$ on $\kk Q_0$ factors through $\uqsl$ (see Corollary \ref{cor:uqslonvertices2}).
\item For any $a \in Q_1$, we have 
\begin{equation}\label{eq:uqslonkQ}
((\gamma_{sa}^{\bullet})^n - (\gamma_{ta}^{\bullet})^n)a = (\sigma^{\bullet})^n(a),
\end{equation}
for $\bullet \in \{E, F\}$. 
\end{enumerate}
\end{corollary}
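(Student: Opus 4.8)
The plan is to reduce the problem to the two Taft-type Borel subalgebras and then invoke Corollary \ref{cor:uqbonkQ2}. First I would observe, exactly as in the proof of Corollary \ref{cor:uqslonvertices2}, that the $\Uqsl$-action on $\kk Q$ factors through $\uqsl=\Uqsl/\langle K^n-1,E^n,F^n\rangle$ if and only if the restricted actions of the two subalgebras $\langle K,E\rangle$ and $\langle K,F\rangle$ each factor through their corresponding $T(n)$ quotient. Indeed, the kernel of the action map $\Uqsl\to\End(\kk Q)$ is a two-sided ideal, so the action factors through $\uqsl$ precisely when $K^n-1$, $E^n$, and $F^n$ all act as zero; since $\langle K^n-1,E^n\rangle$ lies in $\langle K,E\rangle$ and $\langle K^n-1,F^n\rangle$ lies in $\langle K,F\rangle$, this is exactly the requirement that both restricted actions factor through $T(n)$.

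Next I would run Corollary \ref{cor:uqbonkQ2} on each Borel. Because $n$ is odd, both $q^2$ and $q^{-2}$ are primitive $n^{\rm th}$ roots of unity, so in each application the role of the root-of-unity order is played by $n$ and the relevant Taft quotient is $T(n)=T(n,n)$. For $\langle K,E\rangle\cong U_{q^2}(\fb)$ the isomorphism \eqref{eq:borels} matches $g\leftrightarrow K$ and $x\leftrightarrow E$, and comparing \eqref{eq:EFonarrow} with \eqref{eq:xonarrow} shows the Borel data is $\gamma=\gamma^E$ and $\sigma=\sigma^E$. Corollary \ref{cor:uqbonkQ2} then yields, via \eqref{eq:uqbTaft}, the equation $(\gamma^E_{sa})^n K^n\cdot a-(\gamma^E_{ta})^n a=(\sigma^E)^n(a)$ together with the conditions that $K^n$ act as the identity and that the vertex action factor through $T(n)$.

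The $F$-Borel is the delicate step, and I expect it to be the main obstacle, because of the $K^{-1}$-twist in the $F$-formulas \eqref{eq:EFonvertex} and \eqref{eq:EFonarrow}. Here \eqref{eq:borels} matches the skew-primitive generator of $U_{q^{-2}}(\fb)$ with $x=KF$ (so that $g^{-1}x\leftrightarrow F$), not with $F$ itself. I would first verify that applying $K$ to the $F$-formulas recovers exactly the Borel shapes \eqref{eq:xonvertex} and \eqref{eq:xonarrow} with Borel parameter $q^{-2}$ and with Borel data $\gamma=\gamma^F$ and $\sigma=\sigma^F$. The key identity is $(KF)^n=q^{-n(n-1)}K^nF^n=K^nF^n$, obtained from $KF=q^{-2}FK$ and $q^n=1$; hence once $K^n$ acts as the identity one has $x^n=F^n$ as operators, so $x^n$ acts as zero if and only if $F^n$ does. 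Corollary \ref{cor:uqbonkQ2} applied to this Borel then gives $(\gamma^F_{sa})^n K^n\cdot a-(\gamma^F_{ta})^n a=(\sigma^F)^n(a)$.

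Finally I would assemble the pieces. Condition (1) is precisely the statement that $K^n$ acts as the identity on $\kk Q$, common to both Borels. The combined requirement that both restricted vertex actions factor through $T(n)$ is, by Corollary \ref{cor:uqslonvertices2}, exactly condition (2). And once $K^n$ acts as the identity, the factor $K^n\cdot a$ in each of the two displayed equations becomes $a$, collapsing them into the single statement \eqref{eq:uqslonkQ} for $\bullet\in\{E,F\}$, which is condition (3). This shows that factoring through $\uqsl$ is equivalent to (1)--(3), completing the proof.
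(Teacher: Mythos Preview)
Your proposal is correct and follows essentially the same approach as the paper's own proof, which simply observes that factoring through $\uqsl$ is equivalent to both Borel restrictions factoring through $T(n)$ and then invokes Corollary~\ref{cor:uqbonkQ2}. You have carefully filled in the details the paper leaves implicit, in particular the verification that under the identification $x\leftrightarrow KF$ for the $F$-Borel one has $\gamma=\gamma^F$, $\sigma=\sigma^F$, and $(KF)^n=K^nF^n$ so that killing $x^n$ modulo $K^n-1$ is the same as killing $F^n$.
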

\begin{proof}
The action of $\Uqsl$ factors through $\uqsl$ if and only if the actions of the Borel subalgebras $\langle E, K \rangle$ and $\langle F, K \rangle$ factor through $T(n)$. The statement follows from Corollary \ref{cor:uqbonkQ2}.
\end{proof}

\section{Tensor categorical viewpoint}\label{sec:bimodules}
We now turn to the language of tensor categories, and in the case of Taft algebras, connect our results with work of Etingof and Ostrik on exact algebras in the category $\rep(T(n))$.

\subsection{Hopf actions and tensor categories}\label{sec:tensorcat}
The study of Hopf actions on quiver path algebras falls within the more general framework of tensor algebras in tensor categories, as in \cite{EKW20}.
Here, the relevant tensor categories are $\mathcal{C}=\rep(H)$ where $H$ is one of the Hopf algebras of \eqref{eq:algebras}.
These categories are not all \emph{finite} in the sense of \cite[\S2.1]{EO} (i.e. equivalent to the category of finite-dimensional representations of some finite-dimensional algebra), but this doesn't cause any problem in the present work, since we do not use results where the finiteness condition is relevant.

Let $H$ be a Hopf algebra, $Q$ a quiver, and let $S=\kk Q_0$ and $V=\kk Q_1$, considered as an $S$-bimodule, so that $\kk Q \cong T_S(V)$.
We say an $H$-action on $T_S(V)$ is \emph{graded} if it preserves the path length grading, which is equivalent to each of $S$ and $V$ being $H$-stable.  A graded $H$-action makes $S$ into an algebra in $\mathcal{C}$ and $V$ an $S$-bimodule in $\mathcal{C}$. 
In the language of \cite{EKW20}, we say $T_S(V)$ is a $\mathcal{C}$-tensor algebra.  
It is furthermore a \emph{minimal, faithful} $\mathcal{C}$-tensor algebra if $V$ is an indecomposable $S$-bimodule in $\mathcal{C}$, and no two-sided ideal of $S$ in $\mathcal{C}$ acts by 0 on $V$.  Intuitively, these are the buildings blocks of all $\mathcal{C}$-tensor algebras.

To study minimal, faithful tensor algebras in $\mathcal{C}$ we may assume $S$ has only 1 or 2 indecomposable summands as an algebra in $\mathcal{C}$ (see \cite[Rmk~3.15]{EKW20}).
We do not consider the notion of equivalence of tensor algebras in this paper, so the case of $S$ being indecomposable is subsumed by the case of $S$ having 2 indecomposable summands, by taking $S_1=S_2$ in the study of $S_1$-$S_2$-bimodules in $\mathcal{C}$.

In the following sections, we consider categories of $S_1$-$S_2$-bimodules in $\mathcal{C}$, where $S_1$, $S_2$ are commutative algebras in $\mathcal{C}$.
We establish an equivalence between any such category of bimodules, 
and a subcategory of certain finite-dimensional representations of an associative $\kk$-algebra, explicitly given in terms of quivers with relations.

\subsection{The relevant quivers and bimodules}\label{sec:quiversbimods}
Fix a positive integer $N$ and $\mu \in \kk^{\times}$. To this data we associate an infinite quiver $\cQ(\mu,N)$ whose vertex set is
$\kk^\times \times \ZZ/N\ZZ$.
Coming out of each vertex $(\lambda,k)$ there is a loop, and an arrow $(\mu\lambda,k+1) \to (\lambda,k)$. We refer to the loops as \emph{$b$-type arrows} and the arrows $(\mu\lambda,k+1)\to (\lambda,k)$ as \emph{$a$-type arrows}. When it is not clear from context, we index these by their target vertex.
The following lemma (whose easy proof is omitted) gives the form of the connected components of this quiver.

\begin{lemma}\label{lem:connectedcomps}
If $\mu$ is not a root of unity, then the connected components of $\cQ(\mu,N)$ all have the form $\cS_\infty$, where $\cS_\infty$ is the infinite quiver shown below.
\begin{equation}\label{eq:Sinfinity}
\cS_\infty=
\vcenter{\hbox{
\begin{tikzpicture}[point/.style={shape=circle,fill=black,scale=.5pt,outer sep=3pt},>=latex]
\node (dots1) at (0,0) {$\cdots$};
\node[point,label={below:$(\mu\lambda,k+1)$}] (-1) at (2,0) {} edge[in=135,out=45,loop] node[above] {$b$} ();
\node[point,label={below:$(\lambda,k)$}] (0) at (5,0) {} edge[in=135,out=45,loop] node[above] {$b$} ();
\node[point,label={below:$(\mu^{-1}\lambda,k-1)$}] (1) at (8,0) {} edge[in=135,out=45,loop] node[above] {$b$} ();
\node (dots2) at (11,0) {$\cdots$};
\path[->] (dots1) edge node[above] {} (-1); 
\path[->] (-1) edge node[above] {$a$} (0);
\path[->] (0) edge node[above] {$a$} (1) ;
\path[->] (1) edge node[above] {$a$} (dots2) ;
\end{tikzpicture} }}
\end{equation}

If $\mu$ is a root of unity, each connected component of $\cQ(\mu,N)$ has the form shown below, where $\cS_p$ has $p:=\lcm(|\mu|,N)$ vertices.
\begin{equation}\label{eq:Sr}
\cS_p=
\vcenter{\hbox{
\begin{tikzpicture}[point/.style={shape=circle,fill=black,scale=.5pt,outer sep=3pt},>=latex]
\node[point,label={-100:$(\mu^{p-1}\lambda,p-1)$}] (0) at (0,0) {} edge[in=135,out=45,loop] node[above] {$b$} ();
\node[point,label={below:$(\mu^{p-2}\lambda,p-2)$}] (1) at (3,0) {} edge[in=135,out=45,loop] node[above] {$b$} ();
\node (dots) at (6,0) {$\cdots$};
\node[point,label={-75:$(\lambda,0)$}] (r) at (9,0) {} edge[in=135,out=45,loop] node[above] {$b$} ();
\path[->] (0) edge node[above] {$a$} (1); 
\path[->] (1) edge node[above] {$a$} (dots);
\path[->] (dots) edge node[above] {$a$} (r) ;
\path[->] (r) edge [bend left=30] node[midway, above] {$a$} (0);
\end{tikzpicture} }}
\end{equation}
\end{lemma}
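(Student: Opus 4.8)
The plan is to determine the connected components entirely from the \emph{$a$-type} arrows, since the \emph{$b$-type} loops join no two distinct vertices. First I would record the local picture at an arbitrary vertex $(\lambda,k)$: by definition it is the target of a unique $a$-arrow, the one with source $(\mu\lambda,k+1)$, and it is the source of a unique $a$-arrow, the one with target $(\mu^{-1}\lambda,k-1)$. Hence every vertex has $a$-in-degree and $a$-out-degree equal to $1$, so each connected component is either a bi-infinite oriented line or a finite oriented cycle.

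Next I would parametrize the component of a fixed vertex $(\lambda,k)$. Iterating the two maps above (traversed in either direction) shows that the vertices joined to it by a path of $a$-arrows are exactly $\{(\mu^j\lambda,\,k+j) : j \in \ZZ\}$, with the second coordinate read in $\ZZ/N\ZZ$. The shape of the component is therefore governed by the smallest $j>0$ for which $(\mu^j\lambda,k+j)=(\lambda,k)$, which occurs precisely when $\mu^j=1$ and $N \mid j$ hold simultaneously.

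If $\mu$ is not a root of unity, then no positive power of $\mu$ equals $1$, so the first coordinates $\mu^j\lambda$ are pairwise distinct and the sequence never closes up; the component is the bi-infinite chain $\cS_\infty$ of \eqref{eq:Sinfinity}. If $\mu$ is a root of unity of order $|\mu|$, then $\mu^j=1$ together with $N\mid j$ holds exactly for the common multiples of $|\mu|$ and $N$, so the least positive such $j$ is $p=\lcm(|\mu|,N)$ and the component is an oriented $p$-cycle. To match the labeling of \eqref{eq:Sr} I would take base vertex $(\lambda,0)$, list the $p$ distinct vertices as $(\mu^i\lambda,i)$ for $0 \le i \le p-1$ (second coordinate mod $N$), and check the wrap-around arrow: the $a$-arrow out of $(\lambda,0)$ has target $(\mu^{-1}\lambda,-1)=(\mu^{p-1}\lambda,p-1)$, using $\mu^{-1}=\mu^{p-1}$ (as $|\mu|\mid p$) and $-1\equiv p-1 \pmod N$ (as $N\mid p$). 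This reproduces $\cS_p$ exactly. No step is genuinely difficult; the only point meriting care is the simultaneous-periodicity computation yielding $p=\lcm(|\mu|,N)$, which is precisely what forces the two coordinates to return to their starting values at the same time.
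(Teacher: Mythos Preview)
Your argument is correct: the $b$-loops are irrelevant to connectivity, each vertex has $a$-in-degree and $a$-out-degree one, and the orbit of $(\lambda,k)$ under the shift $(\lambda,k)\mapsto(\mu\lambda,k+1)$ closes up exactly when both $\mu^j=1$ and $N\mid j$, giving period $p=\lcm(|\mu|,N)$ when $\mu$ is a root of unity and an infinite chain otherwise. The paper itself omits the proof of this lemma as easy, so there is no argument to compare yours against; what you have written is exactly the natural verification.
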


Let $\Gamma(\mu,N)$ be the quotient of $\kk\cQ(\mu,N)$ by the ideal generated by all relations of the form, recalling our convention from Section \ref{sec:quivers} of reading paths from left to right.
\begin{equation}\label{eq:Qrelations}
ba = \mu ab.
\end{equation} 
Let $\nrep{\Gamma(\mu,N)}$ be the category of finite-dimensional representations of $\Gamma(\mu,N)$ for which the linear map assigned to each loop is nilpotent (but larger oriented cycles are not necessarily nilpotent).
We typically denote a representation of $\Gamma(\mu,N)$ by the shorthand $W=(W_{\lambda,k},\ A_{\lambda,k},\ B_{\lambda,k})$, where $W_{\lambda,k}$ is the vector space associated to vertex $(\lambda,k)$, and (recalling our contravariant convention for representations from Section \ref{sec:quivers})
\begin{equation}
    A_{\lambda,k} \in \Hom_\kk(W_{\lambda,k}, W_{\mu\lambda,k+1}), \qquad B_{\lambda,k} \in \End_\kk(W_{\lambda,k})
\end{equation}
are the maps associated to the arrows $a_{\lambda,k}$ and $b_{\lambda,k}$ respectively.

For a positive integer $m$, consider the algebra $S_m$ with action of $G=\langle g \rangle \simeq \ZZ$ defined as follows. We define $S_m=\kk^m$ as a vector space, with coordinate-wise multiplication.  Letting $e_i$ for $0 \leq i \leq m-1$ be the $i^{\text{th}}$ standard basis vector in $S_m$, we get a complete system of primitive orthogonal idempotents $\{e_0, \dotsc, e_{m-1}\}$.  
The action of $G$ is by $g \cdot e_i = e_{i+1}$, where we always interpret the subscript modulo $m$.  We identify $S_m$ with the path algebra of a quiver with $m$ vertices and no arrows.

\begin{notation}\label{not:uqb}
To study minimal and faithful $\rep(\uqb)$-tensor algebras, we fix the following notation. 
\begin{itemize}
    \item a pair of positive integers $(m,m')$
    \item $\ell=\lcm(m,m')$ and $d =\gcd(m,m')$
    \item $\cQ:=\cQ(q^\ell,d)$ as defined at the beginning of this subsection
    \item $\Gamma(q^\ell,d)$ is the quotient of $\kk \cQ(q^\ell, d)$ by the ideal generated by relations of the form \eqref{eq:Qrelations}
    \item $\cN:=\nrep{\Gamma(q^\ell,d)}$ as defined just above \eqref{eq:Qrelations}
    \item $S_m,\ S_{m'}$ are the algebras defined immediately above -- through the end of  Section \ref{sec:EO}, we assume these algebras have fixed $\uqb$-actions extending the $G$-action
    \item $\cB$ is the category of (finite-dimensional) $S_m$-$S_{m'}$-bimodules in $\rep(\uqb)$
\end{itemize}
\end{notation}

In the following subsections we construct mutually quasi-inverse equivalences 
\begin{equation}
    \Phi\colon \cB \to \cN, \qquad \Psi \colon \cN \to \cB.
\end{equation}

We need a little more technical notation for construction of these functors.
Let $R\subset \kk^\times$ be a set of coset representatives for the subgroup $\langle q^\ell \rangle$ in $\kk^\times$,
and let $(\lambda, k)$ be a vertex of $\cQ$.
Define a function
\begin{equation}\label{eq:tau}
    \tau\colon \cQ_0 \to \ZZ
\end{equation}
in two cases, depending whether $q$ is a root of unity or not.  
Given $(\lambda, k) \in \cQ_0$, let $\lambda_0 \in R$ be the coset representative of $\lambda$.
If $q$ is not a root of unity, there exists a unique $s \in \ZZ$ such that $\lambda = q^{\ell s}\lambda_0$.  In this case, we take $\tau(\lambda,k) \in \ZZ$ minimal such that $\tau(\lambda,k) \geq s$ and $\tau(\lambda,k) = k \mod d$.
If $q$ is a root of unity, there exists a unique $0 \leq \tau(\lambda,k) < \lcm(|q^\ell|,d)$ such that 
both $\lambda=q^{\ell \tau(\lambda,k)}\lambda_0$ and $\tau(\lambda,k) =  k \mod d$.

When $q$ is a root of unity and, we also define a ``correction factor'' associated to the rightmost vertex in \eqref{eq:Sr}.  This is only needed for technical purposes later:
\begin{equation}\label{eq:epsilon}
\ze=\ze(q, m,m',\zl,j):= \begin{cases}
z_1 m & \text{if $q$ is a root of unity and } \tau(\lambda,k) = \lcm(|q^\ell|,d)-1\\
0 & \text{otherwise}.
\end{cases}
\end{equation}
Here, $z_1, z_2 \in \ZZ$ are chosen so that
$\lcm(|q^\ell|,d)= z_1m + z_2m'$ (which is possible since the left hand side is a multiple of $d$).

\subsection{Unraveling bimodules to quiver representations} 
In this section, we construct a functor $\Phi\colon \cB \to \cN$ which unravels a bimodule to extract a quiver representation that minimally encodes the data defining the bimodule.  We will see that this functor admits a quasi-inverse in the next section.

Retaining the notation above, we fix a bimodule $V \in \cB$.
We wish to construct a representation $W \in \cN$ using the data from Theorem \ref{thm:uqbonkQ} that determines the action of $\uqb$ on $V$.  
Recall that $V$ can be decomposed into arrow spaces
\begin{equation}\label{eq:eigenspacedecomp}
V = \bigoplus_{i,j} V_j^i, \quad \text{ where } V^i_j := e_i V e_j,
\end{equation}
where the superscripts are interpreted modulo $m$ and the subscripts modulo $m'$. Each arrow space $V^i_j$ is a representation of the subgroup $\langle g^\ell \rangle \leq G$.
Decomposing as representations of $\langle g^\ell \rangle$, we get generalized eigenspace decompositions
\begin{equation}\label{eq:eigenspacedecomp2}
    V^i_j = \bigoplus_{\lambda \in \kk^\times}V^i_j(\lambda), \quad V^i_j(\lambda) := \setst{v \in V^i_j}{(g^\ell - \lambda\,1)^M\cdot v = 0 \text{ for } M \gg 0}.
\end{equation}
Recalling the definition of $\tau$ below  \eqref{eq:tau}, we then set
\begin{equation}\label{eq:Wlambda}
W_{\lambda,k}:=V^0_{\tau(\lambda,k)}(\lambda).
\end{equation}

Appying Theorem \ref{thm:uqbonkQ} to the path algebra with $\uqb$-action $\kk Q \simeq T_{S_m\oplus S_{m'}}(V)$,
we get a linear map $\sigma\colon V \to V$ satisfying relation ($\sigma$3), which gives
\begin{equation}\label{eq:sigmamu}
    \sigma(V^i_j(\lambda))\subseteq V^i_{j+1}(q^\ell\lambda) 
    \quad\text{for all }\lambda\in \kk^\times.
\end{equation}
In the case $q$ is not a root of unity, one checks from the definition that $\tau(q^\ell\lambda,k+1)=\tau(\lambda, k)+1$ for any $(\lambda, k)\in \cQ$, thus we have a natural linear map
\begin{equation}\label{eq:Alk}
A_{\lambda,k}:=\sigma |_{W_{\lambda,k}} \colon W_{\lambda,k} \to W_{q^\ell\lambda,k+1}
\end{equation}
for each $(\lambda, k)$.

When $q$ is a root of unity, however, we only have $\tau(q^\ell\lambda,k+1)=\tau(\lambda, k)+1$ when $\tau(\lambda,k) \neq \lcm(|q^\ell|,d)-1=:p-1$, 
so we only get maps of the form \eqref{eq:Alk} over each rightward pointing $a$-type arrow in components of $\cQ$ of the form \eqref{eq:Sr}.
This is because the restriction of $\sigma$ sends 
\begin{equation}\label{eq:Alknew1}
W_{q^{\ell(p-1)}\lambda, p-1}= V^0_{p-1}(q^{\ell(p-1)}\lambda)=V^0_{p-1}(q^{-\ell}\lambda) \longrightarrow V^0_p(\lambda),
\end{equation}
with the target being $G$-equivariantly isomorphic (but not equal) to the intended target of $V^0_0(\lambda)=W_{\lambda,0}$.
For this reason we introduced the ``correction factor'' 
$\ze$ in \eqref{eq:epsilon}: since $\ze\equiv 0 \mod m$ and $\ze \equiv p \mod m'$,
the action of $g^{-\ze}$ gives a functorial isomorphism
\begin{equation}\label{eq:Alknew2}
V^0_p(\lambda) \xto{\sim} V^0_0(\lambda).
\end{equation}
Thus the composition of \eqref{eq:Alknew1} and \eqref{eq:Alknew2} gives a linear map
\begin{equation}\label{eq:Alk2}
    A_{q^{\ell(p-1)}}\lambda,p-1\colon 
    W_{q^{\ell(p-1)}}\lambda,p-1 \to W_{\lambda, 0}
\end{equation}
associated to the lower, leftward pointing arrow in a component of $\cQ$ of the form \eqref{eq:Sr}.

We also have a nilpotent endomorphism $B_{\lambda,k} = \left.(g^\ell - \lambda\,1)\right|_{W_{\lambda,k}}$ of each $W_{\lambda,k}$.
With this, we define a representation of $\cQ$ by 
$\Phi(V)=(W_{\lambda,k},\ A_{\lambda,k},\ B_{\lambda,k})$.
The relation ($\sigma 3$) applied $\ell$ times implies that $B_{q^\ell\lambda,k} A_{\lambda,k}=q^\ell A_{\lambda,k} B_{\lambda,k}$
for every $(\lambda,k) \in \cQ$, 
so $\Phi(V)$ satisfies the relations \eqref{eq:Qrelations}, and thus $\Phi(V) \in \cN$.
This gives the definition of $\Phi$ on objects.

Let $\varphi\colon V \to V'$ be a morphism in $\cB$.
Define $\Phi$ on morphisms by $\Phi(\varphi) = \bar{\varphi}$ where $\bar{\varphi}(\lambda,k) = \left. \varphi \right|_{W_{\lambda,k}}$. 

\begin{proposition}\label{prop:Phi}
The definition above makes $\Phi$ a functor $\cB \to \cN$.
\end{proposition}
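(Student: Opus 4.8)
The plan is to verify that $\Phi$ is well-defined on objects and morphisms and that it respects identities and composition. Most of the object-level work has already been carried out in the construction preceding the statement: the definition of $W_{\lambda,k}$ via \eqref{eq:Wlambda}, the construction of the $A$-maps in \eqref{eq:Alk} and \eqref{eq:Alk2}, the nilpotence of $B_{\lambda,k}$, and the verification that the relation \eqref{eq:Qrelations} holds so that $\Phi(V) \in \cN$. So for the object part, I would simply summarize that $\Phi(V)$ is a well-defined object of $\cN$, perhaps reiterating that nilpotence of each $B_{\lambda,k}$ is immediate since it is the restriction of $g^\ell - \lambda$ to a generalized eigenspace for eigenvalue $\lambda$, and that the commutation relation $B A = q^\ell A B$ forcing \eqref{eq:Qrelations} follows from applying ($\sigma 3$) a total of $\ell$ times.

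The substantive content is therefore the morphism part. Given $\varphi \colon V \to V'$ in $\cB$, I must check that $\bar\varphi$, defined by restriction $\bar\varphi(\lambda,k) = \varphi|_{W_{\lambda,k}}$, actually lands in the correct target space and defines a morphism of representations. First I would observe that a bimodule morphism $\varphi$ is in particular $G$-equivariant and respects the $S_m$-$S_{m'}$-bimodule structure, so it preserves each arrow space $V^i_j$ and commutes with the $\langle g^\ell\rangle$-action; hence it carries the generalized eigenspace $V^i_j(\lambda)$ into $V'^i_j(\lambda)$, so that $\varphi|_{W_{\lambda,k}}$ indeed maps $W_{\lambda,k} = V^0_{\tau(\lambda,k)}(\lambda)$ into $W'_{\lambda,k}$. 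Second, I would check the two commuting-square conditions required by the definition of a morphism of quiver representations (from Section \ref{sec:quivers}): that $\bar\varphi$ commutes with the $A$-maps and with the $B$-maps.

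The commutation with $B_{\lambda,k}$ is immediate, since $B$ is built from the $g^\ell$-action and $\varphi$ is $G$-equivariant. The commutation with the $A$-maps is the main point to verify, and I expect this to be the only real obstacle. The key input is that $\varphi$, being a morphism of tensor algebras (equivalently, of $\uqb$-module algebras), commutes with the $x$-action and hence with the associated map $\sigma$ extracted in Theorem \ref{thm:uqbonkQ}; that is, $\varphi \circ \sigma = \sigma' \circ \varphi$. Since $A_{\lambda,k}$ is defined as a restriction of $\sigma$ (in the non-root-of-unity case) or as a restriction of $\sigma$ post-composed with the functorial isomorphism induced by $g^{-\ze}$ (in the root-of-unity case of \eqref{eq:Alk2}), the desired square $A'_{\lambda,k}\,\bar\varphi(\lambda,k) = \bar\varphi(q^\ell\lambda,k+1)\,A_{\lambda,k}$ reduces to $\varphi\sigma = \sigma'\varphi$ together with $G$-equivariance of $\varphi$ (to pass the $g^{-\ze}$ correction through $\varphi$). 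I would treat the two cases uniformly where possible, isolating the root-of-unity correction factor as the one spot needing the extra remark that $\varphi$ commutes with $g^{-\ze}$. Finally, functoriality (preservation of identities and composition) is formal, since restriction of linear maps to fixed subspaces is visibly compatible with composition and sends the identity to the identity; I would state this in one sentence to complete the proof.
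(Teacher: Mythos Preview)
Your proposal is correct and follows essentially the same approach as the paper: both argue that $\varphi$ preserves arrow spaces and generalized $g^\ell$-eigenspaces (giving well-definedness of $\bar\varphi$ and commutation with $B$), and then deduce $\sigma'\varphi=\varphi\sigma$ from $x$-equivariance to obtain commutation with $A$. The paper makes this last step explicit by expanding $x\cdot\varphi(a)=\varphi(x\cdot a)$ via \eqref{eq:xonarrow} and cancelling, whereas you invoke it more abstractly; your explicit mention of the $g^{-\ze}$ correction in the root-of-unity case and of formal functoriality is slightly more detailed than the paper, but the substance is the same.
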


\begin{proof} 
Retaining the notation above, write $\Phi(V') = W'=(W'_{\lambda,k},\ A'_{\lambda,k},\ B'_{\lambda,k})$. 
Let $\sigma$ and $\sigma'$ be the maps which determine the action of $x$ on $V$ and $V'$ respectively, as in Theorem \ref{thm:uqbonkQ}.

We must show that $\bar{\varphi}: W \to W'$ is a morphism in $\cN$. Since $\varphi$ is a morphism of bimodules, it preserves arrow spaces. 
Since $\varphi$ is a morphism of $\uqb$-representations, it commutes with the action of $g^\ell$ and thus preserves the generalized eigenspaces of this action. 
It follows that $\left. \varphi \right|_{W_{\lambda,k}}$ is a map $W_{\lambda,k} \to W'_{\lambda,k}$ and that $B'_{\lambda,k}\bar{\varphi}(\lambda,k) = \bar{\varphi}(\lambda,k)B_{\lambda,k}$ for every $(\lambda,k) \in \cQ$.

For $a \in W_{\lambda,k}$, we have $x \cdot \varphi(a) = \varphi(x\cdot a)$, which expands via \eqref{eq:xonarrow} to
\begin{equation}
\gamma_{ta}\varphi(a) -\gamma_{sa}q^{-1}g \cdot \varphi(a) + \sigma'\varphi(a) = \gamma_{ta} \varphi(a) -\gamma_{sa}q^{-1}g\cdot\varphi(a)  + \varphi\sigma(a),
\end{equation}
so $\sigma'\varphi(a) = \varphi\sigma(a)$. 
Therefore, $A'_{\lambda,k} \bar{\varphi}(\lambda,k) = 
\bar{\varphi}(q^\ell\lambda,k+1)A_{\lambda,k}$ and $\bar{\varphi}$ is a morphism in $\cN$. 
\end{proof}

\subsection{Equivalence of $\cB$ and $\cN$}
In this section, we construct a functor $\Psi: \cN \to \cB$ which reverses the process carried out by $\Phi$, taking a quiver representation and constructing a bimodule from it in a minimal way.  More precisely, we will show that $\Psi$ is quasi-inverse to $\Phi$.

Fix $W=(W_{\lambda,k},\ A_{\lambda,k},\ B_{\lambda,k}) \in \cN$.
We consider each space $W_{\lambda,k}$ as a representation of $\langle g^\ell\rangle$ via the action
\begin{equation}\label{eq:glonW}
g^\ell \cdot w = \lambda w + B_{\lambda,k}(w), \qquad \text{for } w \in W_{\lambda,k}.
\end{equation}

We define vector spaces for $0 \leq j < m'$ (recalling $\tau\colon \cQ\to\ZZ$ from the previous section) by
\begin{equation}\label{eq:Wbimod}
\tilde{W}^0_j = 
\bigoplus_{\substack{(\lambda,k) \in \cQ\\ \tau(\lambda,k)=j}}
W_{\lambda,k}.
\end{equation}
Then we define an $S_m$-$S_{m'}$-bimodule structure on
\begin{equation}\label{eq:Wtilde}
\tilde{W} = \bigoplus_{j=0}^{m'-1} \tilde{W}^0_j 
\end{equation}
by $e_0 \tilde{W} e_j = \tilde{W}^0_j$.
Let $H \leq \kk G$ be the subalgebra generated by $g^\ell$. 
Since $H$ acts on $\tilde{W}$, the space $\kk G \otimes_H \tilde{W}$ has an induced action of $\kk G$. 
Moreover, we can extend the $S_m$-$S_{m'}$-bimodule structure on $\tilde{W}$ to $\kk G \otimes_H \tilde{W}$ by 
$e_i(g^t \otimes w)e_j = g^t\otimes (e_{i-t}w e_{j-td})$, which is well defined because $\ell=\lcm(m,m')$.

We now extend the action of $G$ to an action of $\uqb$ by specifying an action of $x$.
Let $\{\gamma_{i}\}_{i=0}^{m-1}$ and $\{\gamma'_{j}\}_{j =0}^{m'-1}$ be the scalars from Proposition \ref{prop:uqbonvertices} determining the actions of $x$ on $S_m$ and $S_{m'}$, respectively. 
For $w \in W_{\lambda,k}$ and $t \in \ZZ$, let $j:=\tau(\lambda,k)$ and define
\begin{equation}\label{eq:xgsw}
x \cdot (g^t\otimes w) = q^{-t}(\gamma'_j g^t\otimes w -\gamma_{0}q^{-1}g^{t+1}\otimes w + g^{t+\ze}\otimes A_{\lambda,k}(w))
\end{equation}
where $j$ is interpreted modulo $m'$ as usual, and $\ze$ is the ``correction factor'' from \eqref{eq:epsilon}.

To see that this action is well-defined on $\kk G \otimes_H \tilde{W}$,
one can check that 
\begin{equation}\label{eq:welldefcheck}
    x\cdot(g^\ell \otimes w) - x\cdot(1 \otimes g^\ell \cdot w)= \gamma'_j(q^{-\ell}-1)g^\ell\otimes w - \gamma_0q^{-1}(q^{-\ell}-1)g^{\ell+1}\otimes w,
\end{equation}
where the relation $q^{\ell} A_{\lambda,k} B_{\lambda,k} = B_{q^\ell\lambda,k+1}A_{\lambda,k}$ needs to be used in the computation.
The first summand on the right hand side always vanishes because either $|q|$ divides $m'$, which divides $\ell$, or $\gamma'_j=0$ from Corollaries \ref{cor:uqbonvertices1} and \ref{cor:uqbonvertices2}.  
The second summand also vanishes by the same reasoning, so the expression \eqref{eq:xgsw} is well-defined.

It follows that $\Psi(W) :=\kk G \otimes_H \tilde{W}$ is an object of $\cB$, giving the definition of $\Psi$ on objects.
For a morphism $\vartheta: W \to W'$ in $\cN$, we can simply take $\Psi(\vartheta) = id_{\kk G}\otimes \vartheta$. It is straightforward to see then that $\Psi$ is a functor.

\begin{proposition}\label{prop:Psi}
We have that $\Psi$ is a functor $\cN \to \cB$. 
\end{proposition}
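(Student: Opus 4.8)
The plan is to verify the three ingredients that make up the claim: that $\Psi(W)$ is a genuine object of $\cB$ for each $W \in \cN$, that $\Psi(\vartheta) = \mathrm{id}_{\kk G}\otimes\vartheta$ is a morphism in $\cB$ for each morphism $\vartheta$ of $\cN$, and that $\Psi$ preserves identities and composition. The last point is immediate from the elementary behavior of the induction functor $\kk G \otimes_H (-)$, so the real content is in the first two.

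For the object part, much of the well-definedness has already been recorded in the construction preceding the proposition, and the task is to assemble it. First, $\kk G \otimes_H \tilde W$ is a finite-dimensional $\kk G$-module, and the $S_m$-$S_{m'}$-bimodule structure $e_i(g^t\otimes w)e_j = g^t\otimes(e_{i-t}we_{j-td})$ descends to the tensor product precisely because $\ell=\lcm(m,m')$; here the nilpotency hypothesis on the loops $B_{\lambda,k}$ ensures that \eqref{eq:glonW} defines a legitimate action of $H=\langle g^\ell\rangle$ under which $W_{\lambda,k}$ has sole generalized eigenvalue $\lambda$, matching the eigenspace bookkeeping that $\tau$ encodes. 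Second, the $x$-action \eqref{eq:xgsw} is well defined on $\kk G \otimes_H \tilde W$ by the computation \eqref{eq:welldefcheck}, which uses the defining relation $q^\ell A_{\lambda,k}B_{\lambda,k}=B_{q^\ell\lambda,k+1}A_{\lambda,k}$ of $\Gamma(q^\ell,d)$ together with the vanishing of the residual $\gamma$-terms supplied by Corollaries \ref{cor:uqbonvertices1} and \ref{cor:uqbonvertices2}.

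It then remains to see that these $G$- and $x$-actions really do make $\Psi(W)$ an $S_m$-$S_{m'}$-bimodule in $\rep(\uqb)$, and the clean route is to invoke Theorem \ref{thm:uqbonkQ}(a). Taking $V=\Psi(W)$ as the arrow space and the fixed $\uqb$-actions on $S_m,S_{m'}$ as the vertex data, I read off from \eqref{eq:xgsw} the endomorphism $\sigma$, namely $\sigma=0$ on $S_m\oplus S_{m'}$ and $\sigma(g^t\otimes w)=q^{-t}\,g^{t+\ze}\otimes A_{\lambda,k}(w)$ on arrows, the first two summands of \eqref{eq:xgsw} being exactly the $\gamma_{ta}a-\gamma_{sa}q^{-1}(g\cdot a)$ part of \eqref{eq:xonarrow}. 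I would then check $(\sigma 1)$--$(\sigma 3)$: $(\sigma 1)$ holds by definition; $(\sigma 3)$ is immediate from the prefactor $q^{-t}$, since $\sigma(g\cdot(g^t\otimes w))=\sigma(g^{t+1}\otimes w)=q^{-1}\,g\cdot\sigma(g^t\otimes w)$; and $(\sigma 2)$ asserts that $\sigma(g^t\otimes w)$ carries the correct left and right idempotents, which I would verify by tracking the indices of $g^{t+\ze}\otimes A_{\lambda,k}(w)$ modulo $m$ and $m'$. With $(\sigma 1)$--$(\sigma 3)$ established, Theorem \ref{thm:uqbonkQ}(a) produces a Hopf action of $\uqb$ on $T_{S_m\oplus S_{m'}}(\Psi(W))$, which is exactly the statement that $\Psi(W)$ is a bimodule in $\rep(\uqb)$. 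For the morphism part, $\Psi(\vartheta)$ is $\kk G$-linear and commutes with the bimodule action by construction since $\vartheta$ respects the decomposition \eqref{eq:Wbimod}, and it commutes with \eqref{eq:xgsw} because $\vartheta$, as a morphism in $\cN$, intertwines the maps $A_{\lambda,k}$ and $B_{\lambda,k}$ (hence the $H$-action); thus $\Psi(\vartheta)$ is a morphism of bimodules in $\rep(\uqb)$.

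The main obstacle is the verification of $(\sigma 2)$. The delicate case is the right-hand idempotent in the cyclic components \eqref{eq:Sr} that arise when $q$ is a root of unity: there $A_{\lambda,k}$ wraps around from the vertex with $\tau=p-1$ back to the vertex with $\tau=0$, and one must use that the correction factor $\ze$ of \eqref{eq:epsilon} satisfies $\ze\equiv 0 \pmod m$ and $\ze\equiv p \pmod{m'}$ to confirm that $g^{t+\ze}\otimes A_{\lambda,k}(w)$ lands in the idempotent component demanded by $(\sigma 2)$. By contrast, both the non-root-of-unity case and the non-wraparound root-of-unity case are the easy instances, where $\ze=0$ and $\tau(q^\ell\lambda,k+1)=\tau(\lambda,k)+1$, so the index computation closes immediately.
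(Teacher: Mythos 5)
Your proposal is correct and follows essentially the same route as the paper, which presents the construction (the bimodule structure on $\kk G\otimes_H\tilde W$, the induced $G$-action, the $x$-action \eqref{eq:xgsw}, and the well-definedness check \eqref{eq:welldefcheck}) and then leaves the remaining verification as straightforward; your filling-in via Theorem \ref{thm:uqbonkQ}(a) and the conditions $(\sigma 1)$--$(\sigma 3)$ is exactly the intended mechanism. You also correctly isolate the only delicate point, namely $(\sigma 2)$ in the wraparound case of \eqref{eq:Sr}, which is precisely what the correction factor $\ze$ of \eqref{eq:epsilon} was introduced to handle.
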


Finally, we come to the main result of this section.
The key idea of the proof is that the  $\tau$-function was carefully chosen to accomplish the unraveling, extraction, and reconstruction described at the start of the previous two subsections.

\begin{theorem}\label{thm:uqbequivalence}
The functors $\Phi$ and $\Psi$ are mutually quasi-inverse, thus the categories $\cB$ and $\cN$ are equivalent.
\end{theorem}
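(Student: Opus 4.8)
The goal is to show $\Phi$ and $\Psi$ are mutually quasi-inverse, which means producing natural isomorphisms $\Psi\Phi \cong \mathrm{id}_{\cB}$ and $\Phi\Psi \cong \mathrm{id}_{\cN}$. The plan is to verify each composite separately, starting with the easier direction $\Phi\Psi \cong \mathrm{id}_{\cN}$, since $\Psi$ builds a bimodule from combinatorially labeled pieces and $\Phi$ immediately reads those same labeled pieces back off. The key technical device, as the remark before the theorem emphasizes, is that the function $\tau$ was engineered precisely so that the ``unravel then reconstruct'' and ``reconstruct then unravel'' operations return to the starting data; so much of the work is bookkeeping with $\tau$, the eigenspace decomposition \eqref{eq:eigenspacedecomp2}, and the correction factor $\ze$.

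\textbf{Checking $\Phi\Psi \cong \mathrm{id}_{\cN}$.}
Fix $W=(W_{\lambda,k},A_{\lambda,k},B_{\lambda,k})\in\cN$ and set $V=\Psi(W)=\kk G\otimes_H\tilde W$. I would first identify the arrow space $V^0_j=e_0 V e_j$ using the bimodule structure $e_i(g^t\otimes w)e_j=g^t\otimes(e_{i-t}w\,e_{j-td})$: this picks out the summand $1\otimes\tilde W^0_j$, so $V^0_j\cong\tilde W^0_j=\bigoplus_{\tau(\lambda,k)=j}W_{\lambda,k}$. Next I would compute the $g^\ell$-action on $1\otimes w$ for $w\in W_{\lambda,k}$: by \eqref{eq:glonW} this is $\lambda$ plus a nilpotent part $B_{\lambda,k}$, so the generalized $\lambda$-eigenspace $V^0_j(\lambda)$ is exactly $W_{\lambda,k}$ for the unique $k$ with $\tau(\lambda,k)=j$. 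Hence $(\Phi\Psi W)_{\lambda,k}=V^0_{\tau(\lambda,k)}(\lambda)=W_{\lambda,k}$ on the nose, and $B$ is recovered since $g^\ell-\lambda$ restricts to $B_{\lambda,k}$. Finally I would extract $\sigma$ from the $x$-action \eqref{eq:xgsw}: reading off the coefficient of $g^{\ze}\otimes(-)$ and applying the defining formula \eqref{eq:Alk}/\eqref{eq:Alk2} recovers $A_{\lambda,k}$, where the placement of $\ze$ in \eqref{eq:xgsw} is exactly what compensates for the target-shift in \eqref{eq:Alknew1}--\eqref{eq:Alknew2}. These identifications are manifestly natural in $W$.

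\textbf{Checking $\Psi\Phi \cong \mathrm{id}_{\cB}$ and the main obstacle.}
For the other composite, fix $V\in\cB$ with associated data $\sigma,\gamma_i,\gamma'_j$ from Theorem \ref{thm:uqbonkQ}, and let $W=\Phi(V)$. Here I do not expect equality on the nose but a natural isomorphism. The natural map to build is $\Theta_V\colon\Psi\Phi(V)=\kk G\otimes_H\tilde W\to V$ sending $g^t\otimes w\mapsto g^t\cdot w$, using that each $W_{\lambda,k}=V^0_{\tau(\lambda,k)}(\lambda)$ already sits inside $V$. The plan is to check $\Theta_V$ is a bijection and commutes with the $\uqb$-action. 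Surjectivity and injectivity follow from the two decompositions \eqref{eq:eigenspacedecomp}--\eqref{eq:eigenspacedecomp2}: every arrow space $V^i_j=e_iVe_j$ equals $g^i\cdot V^0_{j-id}$ (translating by the $G$-action), and within $V^0_j$ the eigenspaces $V^0_j(\lambda)$ exhaust the space, with $\tau$ selecting exactly one representative label $(\lambda,k)$ per eigenspace — so $\Theta_V$ matches basis pieces bijectively provided $\tau$ is a bijection onto the relevant index set, which is where I expect the main difficulty. \textbf{The hard part will be verifying that $\Theta_V$ intertwines the $x$-actions}, i.e.\ that the reconstructed formula \eqref{eq:xgsw} for $x\cdot(g^t\otimes w)$ is carried by $\Theta_V$ to the genuine $x\cdot(g^t\cdot w)$ in $V$; this requires commuting $x$ past $g^t$ using $xg=q^{-1}gx$ (producing the $q^{-t}$ prefactor), invoking relation $(\sigma3)$ to track how $\sigma$ shifts eigenvalues by $q^\ell$ and subscripts by $1$, and carefully handling the root-of-unity wraparound case where $\ze\neq 0$ so that $A_{\lambda,k}$ as in \eqref{eq:Alk2} lands in the correct summand after the identification \eqref{eq:Alknew2}. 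Once this intertwining is established on the generators $g$ and $x$ (and naturality in $V$ checked, which is routine since all constructions are functorial restrictions/inductions), the two natural isomorphisms together give the equivalence.
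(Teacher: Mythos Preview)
Your overall strategy matches the paper's: construct the evaluation map $\Theta_V\colon g^t\otimes w\mapsto g^t\cdot w$ (the paper calls it $\xi_V$) for $\Psi\Phi\cong\mathrm{id}_\cB$, and check $\Phi\Psi\cong\mathrm{id}_\cN$ by direct identification. However, there is a genuine gap in your $\Phi\Psi$ argument, and your difficulty assessment is inverted relative to where the content actually lies.

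The gap is your claim that in $V=\kk G\otimes_H\tilde W$, the arrow space $V^0_j=e_0Ve_j$ ``picks out the summand $1\otimes\tilde W^0_j$''. This is false in general: since the tensor is over $H=\langle g^\ell\rangle$, the coset representatives are $g^0,\dots,g^{\ell-1}$, and $e_0(g^t\otimes w)$ is nonzero whenever $t\equiv 0\bmod m$, not only when $t=0$. So $V^0_j$ contains contributions $g^t\otimes\tilde W^0_{j'}$ for every $j'\equiv j\bmod d$ (with $t$ determined by a CRT condition), not just $j'=j$. To isolate $1\otimes W_{\lambda,k}$ you must pass to the generalized $\lambda$-eigenspace \emph{and} then argue, exactly as in your injectivity sketch for $\Theta_V$, that any competing summand $g^t\otimes W_{\lambda,k'}$ forces $\tau(\lambda,k)-\tau(\lambda,k')\in m\ZZ+m'\ZZ=d\ZZ$ and, combined with the eigenvalue constraint, that $\tau(\lambda,k)=\tau(\lambda,k')$ and $\ell\mid t$. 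The paper does precisely this: it proves the $\tau$-combinatorics once (as the injectivity of $\xi_V$) and then explicitly reuses that same computation for $\Phi\Psi$. So your ``easy'' direction is not easier; both directions rest on the same $\tau$-bijectivity argument.

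Conversely, what you flag as ``the hard part'' --- that $\Theta_V$ intertwines the $x$-actions --- the paper dismisses as straightforward, and it is: once you know $\sigma'\varphi=\varphi\sigma$ for bimodule maps (already used in Proposition \ref{prop:Phi}), the compatibility of \eqref{eq:xgsw} with the genuine $x$-action under $g^t\otimes w\mapsto g^t\cdot w$ is a direct unwinding of $(\sigma3)$ and the definition of $\ze$. The substance of the theorem is entirely in the $\tau$-bookkeeping, which you should carry out once and invoke twice.
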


\begin{proof}
For $V \in \cB$, we have $\Psi(\Phi(V)) = \kk G \otimes_H \tilde{\Phi(V)}$. Consider the map $\xi_V \colon \kk G \otimes_H \tilde{\Phi(V)} \to V$ defined by 
$\xi_V(g^t \otimes w)= g^t \cdot w$. We claim this is an isomorphism in $\cB$.  It is straightforward to check that this is a morphism in $\cB$, so we need to verify it is an isomorphism of vector spaces.

To see that $\xi_V$ is surjective, recall the decomposition of $V$ from \eqref{eq:eigenspacedecomp} and \eqref{eq:eigenspacedecomp2}. It is enough to show that each space of the form $V^0_j(\lambda)$ is in the image, since the action of $G$ can then be used to obtain that all $V^i_j(\lambda)$ are in the image.
If $q$ is not a root of unity, let $k \in \ZZ$ be minimal such that $k \equiv j$ mod $d$ and $k \geq s$ where $s$ is the unique integer so that $\lambda = q^{\ell s}\lambda_0$ for $\lambda_0 \in R$. 
If $q$ is a root of unity, choose $k$ to be the unique integer so that $k \equiv j$ mod $d$ and $0 \leq k < \lcm{(\left|q^\ell\right|,d)}$, and $\lambda=q^{\ell k}\lambda_0$.
In either case this means that $\tau(\lambda,k) =k$, and there exist integers $y_1,y_2$ with 
\begin{equation}
    \tau(\lambda,k) = k = j + y_1m + y_2m',
\end{equation}
and thus under the map above we have 
$g^{-y_1 m}\otimes V_{\tau(\lambda,k)}^0(\lambda) \xto{\sim} V^0_j(\lambda)$.

To see that $\xi_V$ is injective, we can use its $G$-equivariance to reduce to considering
$t, k, k', \lambda$ such that
\begin{equation}\label{eq:xiV}
    \xi_V(g^t \otimes V^0_{\tau(\lambda, k)}(\lambda) ) = \xi_V(1 \otimes V^0_{\tau(\lambda, k')}(\lambda)),\quad \text{or}\quad V^t_{\tau(\lambda, k)+t}(\lambda) = V^0_{\tau(\lambda, k')}.
\end{equation}
We want to show $t\equiv 0\mod \ell$ and $\tau(\lambda,k)=\tau(\lambda, k')$.
From the right hand side of \eqref{eq:xiV} we have  $t\equiv 0 \mod m$ and $\tau(\lambda, k) + t \equiv \tau(\lambda, k') \mod m'$.  Combining these, we find that there exist $z'_1, z'_2 \in \ZZ$ such that
\begin{equation}\label{eq:tauktaukprime}
\tau(\lambda, k)-\tau(\lambda, k') = z_1' m +z_2' m'.
\end{equation}
so $\tau(\lambda, k)-\tau(\lambda, k')$ is a multiple of $d$.
When $q$ is not a root of unity, the definition of $\tau$ implies that in fact $\tau(\lambda, k)-\tau(\lambda, k')=0$ and $k'=k$.
When $q$ is a root of unity, we have $\lambda = q^{\ell\tau(\lambda,k)}\lambda_0 =q^{\ell\tau(\lambda,k')}\lambda_0$
so $|q^\ell|$ divides $\tau(\lambda, k)-\tau(\lambda, k')$ as well, 
which shows $\tau(\lambda, k)-\tau(\lambda, k')$ is divisible by $\lcm(|q^\ell|,d)$.  
Then the restriction that $0 \leq \tau(\lambda,k),\, \tau(\lambda,k') < \lcm(|q^\ell|,d)$ forces $\tau(\lambda,k)=\tau(\lambda,k')$
in this case as well, and thus $t \equiv 0 \mod m'$.  This then implies that $t$ is a multiple of $\ell$, so that 
$g^t \otimes V^0_{\tau(\lambda, k)}(\lambda)=1 \otimes V^0_{\tau(\lambda, k)}=1 \otimes V^0_{\tau(\lambda, k')}$, completing the proof that $\xi_V$ is injective.
It is then straightforward to verify $\xi_V$ is natural in $V$, giving an isomorphism of functors $\Psi\circ \Phi \simeq Id_\cB$.

To show the other composition is the identity, we take
$W \in \cN$, and examine the vector space assigned to vertex $(\lambda,k)$ by the representation $\Phi(\Psi(W))=\Phi(\kk G \otimes_H \tilde{W})$.
As defined in \eqref{eq:Wlambda}, this is the generalized $\lambda$-eigenspace of $e_0 (\kk G \otimes_H \tilde{W}) e_{\tau(\lambda, k)}$.  We claim that this is just $1 \otimes W_{\lambda, k} \cong W_{\lambda, k}$.
Indeed, if $g^t \otimes W_{\lambda',k'}$ were also a summand of the generalized $\lambda$-eigenspace of $e_0 (\kk G \otimes_H \tilde{W}) e_{\tau(\lambda, k)}$, then we would have $\lambda'=\lambda$, $t=0\mod m$,
and $\tau(\lambda, k')+t=\tau(\lambda, k) \mod m'$.
This gives
the same equation \eqref{eq:tauktaukprime} as above, and proceeding by the exact same argument we arrive at the conclusion
that $(\lambda',k')=(\lambda,k) \in \cQ$, and furthermore $t$ is divisible by both $m$ and $m'$, thus divisible by $\ell$,
so $g^t \otimes W_{\lambda',k'} = 1 \otimes W_{\lambda, k}$. This shows that $\Phi(\Psi(W))$ assigns the same vector space to each vertex of $\cQ$ as $W$.

Now it follows from \eqref{eq:xgsw} that the map over the arrow $a_{\lambda, k}$ in $\Phi(\Psi(W))$ is $1\otimes A_{\lambda,k}$,
and it follows from \eqref{eq:glonW} that the map over the arrow $b_{\lambda, k}$ in $\Phi(\Psi(W))$ is $1\otimes B_{\lambda,k}$.  This shows that $\Phi(\Psi(W)) \cong W$ in $\cN$, and from there it is easy to verify the remaining details to see we have an isomorphism of functors $\Phi \circ \Psi \cong Id_\cN$.
\end{proof}

Since the algebra $\Gamma(q^\ell, d)$, and thus $\cN$, only depends on $|q^\ell|$ and $d$, we get the following corollary.

\begin{corollary}\label{cor:independent}
The bimodule category $\cB$ is (up to equivalence of categories) independent of the specific actions of $\uqb$ on $S_m$ and $S_{m'}$.  It only depends on $|q^\ell|$ and $d$, up to equivalence.
\end{corollary}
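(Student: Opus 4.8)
The plan is to reduce the statement to a property of the algebra $\Gamma(q^\ell,d)$ via the equivalence $\cB\simeq\cN=\nrep\Gamma(q^\ell,d)$ of Theorem~\ref{thm:uqbequivalence}. The independence from the chosen actions is then immediate: the quiver $\cQ(q^\ell,d)$ and the relations~\eqref{eq:Qrelations} that present $\Gamma(q^\ell,d)$ involve only $q^\ell$ and $d$, and make no reference to the scalars $\gamma_i,\gamma'_j$ of Proposition~\ref{prop:uqbonvertices} encoding the $\uqb$-actions on $S_m$ and $S_{m'}$; hence $\cN$, and with it $\cB$, is unchanged up to equivalence when these actions vary. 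It therefore remains to prove that the isomorphism type of $\Gamma(\mu,d)$ depends only on $|\mu|$ and $d$, and I would do this by building an explicit algebra isomorphism $\Gamma(\mu,d)\cong\Gamma(\mu',d)$ whenever $|\mu|=|\mu'|$.

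First I would identify the underlying quivers. By Lemma~\ref{lem:connectedcomps} every connected component of $\cQ(\mu,d)$ is a copy of $\cS_\infty$ from~\eqref{eq:Sinfinity} when $\mu$ is not a root of unity, and a copy of $\cS_p$ from~\eqref{eq:Sr} with $p=\lcm(|\mu|,d)$ when $\mu$ is a root of unity, so the common shape of the components depends only on $|\mu|$ and $d$. The components themselves are the orbits of the $\ZZ$-action $t\cdot(\lambda,k)=(\mu^t\lambda,\,k+t)$ on $\cQ_0=\kk^\times\times\ZZ/d\ZZ$, and sending an orbit to the coset of $\lambda\mu^{-k}$ gives a bijection between the component set and $\kk^\times/\langle\mu^d\rangle$. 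Since $\langle\mu^d\rangle$ is infinite cyclic when $\mu$ is not a root of unity and otherwise has order $|\mu|/\gcd(|\mu|,d)$, the number of components depends only on $|\mu|$ and $d$. Thus $\cQ(\mu,d)$ and $\cQ(\mu',d)$ are disjoint unions of the same number of copies of the same connected quiver, so they are isomorphic as quivers; fixing such an isomorphism (which necessarily matches loops to loops) induces an isomorphism $\kk\cQ(\mu,d)\cong\kk\cQ(\mu',d)$.

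This isomorphism carries the defining relations $ba=\mu\,ab$ to relations of the same shape but still with the coefficient $\mu$, so the last step is to rescale in order to replace $\mu$ by $\mu'$. I would apply the algebra automorphism of $\kk\cQ(\mu',d)$ that fixes all vertices and $a$-type arrows and multiplies each loop $b$ at a vertex $v$ by a scalar $c_v\in\kk^\times$; under it, the coefficient of the relation across an $a$-arrow is multiplied by the ratio of the loop-scalars at its two endpoints, so one must solve $c_w/c_v=\mu'/\mu$ along every $a$-arrow $v\to w$. On a component of type $\cS_\infty$ this telescoping system always has a solution. On a component of type $\cS_p$ the scalars must in addition close up around the cycle, which forces $(\mu'/\mu)^p=1$; this holds because $p=\lcm(|\mu|,d)$ is a multiple of $|\mu|=|\mu'|$, whence $\mu^p=\mu'^p=1$. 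Because multiplying a loop by a nonzero scalar preserves nilpotency, the resulting isomorphism $\Gamma(\mu,d)\cong\Gamma(\mu',d)$ restricts to an equivalence $\nrep\Gamma(\mu,d)\simeq\nrep\Gamma(\mu',d)$, which together with Theorem~\ref{thm:uqbequivalence} completes the proof. The one genuinely nontrivial point is this cyclic case: the loop-rescaling is a discrete gauge transformation whose solvability around each $\cS_p$ is controlled by the cocycle $(\mu'/\mu)^p$, and the crux is exactly the observation that $|\mu|\mid p$ makes it vanish; the matching of component shapes and counts is routine bookkeeping once the orbit description is in hand.
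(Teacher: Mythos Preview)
Your proposal is correct and follows the same route as the paper: reduce via Theorem~\ref{thm:uqbequivalence} to the observation that $\Gamma(q^\ell,d)$, and hence $\cN$, is independent of the $\uqb$-actions on $S_m,S_{m'}$ and depends only on $|q^\ell|$ and $d$. The paper's own proof is the single sentence preceding the corollary, which simply asserts this dependence without further argument; your loop-rescaling isomorphism $\Gamma(\mu,d)\cong\Gamma(\mu',d)$ for $|\mu|=|\mu'|$ is a correct and natural way to fill in the detail the paper leaves implicit.
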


\begin{remark}
This shows that it will not be feasible to give any kind of ``list'' classifying indecomposable $S_m$-$S_{m'}$ bimodules in $\rep(\uqb)$, since the associated algebra $\Gamma(q^\ell, d)$ will always have finite-dimensional quotients of wild representation type. Informally, this means that the category $\cN$ will always be ``at least as complicated as'' the category of finite-dimensional modules over the free associative $\kk$-algebra $\kk\langle x_1,x_2\rangle$.
\end{remark}

\subsection{Taft algebras and work of Etingof-Ostrik}\label{sec:EO}
Assume here that $\kk$ is algebraically closed, and $q$ is a primitive $n^{\text{th}}$ root of unity.
In this case, Etingof and Ostrik gave a classification of exact module categories over $\rep(T(n))$ by classifying algebras in $\rep(T(n))$ which have no nontrivial right ideals in $\rep(T(n))$ \cite[Theorem 4.10]{EO}.
We begin with a dictionary between the classification of $T(n)$-actions on commutative semisimple algebras $\kk Q_0$ in Corollary \ref{cor:uqbonvertices2} (coming from \cite[Prop.~3.5]{KW16}), and the Etingof-Ostrik classification.  This is logically independent from the rest of the paper, included for completeness of the story.
We then present our results on bimodule categories over generalized Taft algebras.

 Etingof and Ostrik found that the following list covers all algebras in the tensor category $\rep(T(n))$, up to Morita equivalence as algebras in $\rep(T(n))$.  We only describe the algebras enough to establish notation that uniquely identifies them, referring the reader to the proof of \cite[Theorem 4.10]{EO} for more detail, where they proceed by considering the socle filtration of an algebra (as an object of $\rep(T(n))$).
 
 Recall that $G(T(n))=\langle g\rangle$ is cyclic of order $n$. Below, $t$ ranges over the positive divisors of $n$, and $H \leq G$ denotes the unique subgroup of order $t$.
\begin{itemize}
\item[$A(t)$:]
denote by $A(t)$ the algebra of $\kk$-valued functions on the (discrete) group $G/H$.
This is an exact algebra in $\rep(T(n))$ with $x$ acting trivially.  This commutative, semisimple $\kk$-algebra is non-semisimple and indecomposable as an algebra in $\rep(T(n))$.
For $s \in G/H$, we let $e_s$ be the characteristic function of the set $\{s\}$.

\item[$A(t,\lambda)$:] Taking an additional parameter $\lambda \in \kk$, we get an algebra $A(t,\lambda)$
generated by $A(t)$ and an element $y$ satisfying
\begin{equation}
    y = \sum_{s \in G/H} e_{gs}ye_s, \qquad y^n=\lambda.
\end{equation}
These $\kk$-algebras are commutative if and only if $t=n$ (studied earlier in \cite{MS}), and they are all semisimple and indecomposable as algebras in $\rep(T(n))$.
\end{itemize}

We can now compare these to the algebras described in Proposition \ref{prop:uqbonvertices} and Corollary \ref{cor:uqbonvertices2}.
Such an algebra is indecomposable if and only if $G$ acts transitively on the vertices.
Up to relabeling, we have  such a $G$-action on $t$ vertices for each positive divisor $t$ of $n$. 
The following translation can be checked by direct computation.

\begin{proposition}
We have the following correspondence between indecomposable exact algebras in $\rep(T(n))$ described in Proposition \ref{prop:uqbonvertices} and Corollary \ref{cor:uqbonvertices2}, and those in \cite[Theorem 4.10]{EO}:
\begin{itemize}
    \item the unique indecomposable algebra $\kk Q_0$ in our work with $|Q_0|=t$ vertices and parameters $\{\gamma_i\}_{i \in Q_0}=\{0\}$ is $A(n/t)$;
    \item each indecomposable algebra $\kk Q_0$ in our work with $|Q_0|=n$ and parameter set $\{\gamma_i\}_{i \in Q_0}$ is 
    $A(n,\gamma^{-n}(1-q^{-1})^{-n})$, where $\gamma=\gamma_i$ for any choice of $i \in Q_0$.
\end{itemize}
\end{proposition}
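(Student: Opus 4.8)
The plan is to treat this as a direct comparison: in each case I would exhibit an explicit isomorphism of algebras in $\rep(T(n))$ between the algebra $\kk Q_0$ of Proposition~\ref{prop:uqbonvertices} (subject to the factorization criterion of Corollary~\ref{cor:uqbonvertices2}) and the corresponding Etingof--Ostrik algebra, and then read off the parameter. Throughout I would use that $\kk Q_0=\kk^{|Q_0|}$ is commutative and semisimple, so only the commutative algebras on the Etingof--Ostrik list can occur, namely the $A(t)$ and the $A(n,\lambda)$ with $t=n$. Note also that if some $\gamma_i\neq 0$, then Corollary~\ref{cor:uqbonvertices2} forces $\#(G\cdot i)=n$, so the two bullets exhaust all cases: either every $\gamma_i=0$, or $G$ acts regularly on $|Q_0|=n$ vertices.

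For the first bullet, both algebras have $x$ acting as $0$ (by \eqref{eq:xonvertex} with all $\gamma_i=0$, and by definition of $A(t)$ respectively), so it suffices to match the underlying $G$-algebras, i.e.\ the $G$-sets of primitive idempotents. The algebra $\kk Q_0$ with $t$ vertices carries the transitive $G$-action with point stabilizer of order $n/t$; since $G$ is cyclic it has a unique subgroup $H$ of each order, so this $G$-set is isomorphic to $G/H$ with $|H|=n/t$, which is exactly the idempotent $G$-set of $A(n/t)$. Transporting idempotents along this isomorphism gives the desired isomorphism in $\rep(T(n))$.

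The substance is the second bullet. Here $\kk Q_0=\kk^n$ with $g\cdot e_i=e_{i+1}$ and $\gamma_i=q^{-i}\gamma_0$ by \eqref{eq:qgamma}, and I would locate inside $\kk Q_0$ the generator $y$ playing the role of the Etingof--Ostrik generator. Writing $y=\sum_i c_i e_i$, the condition that $y$ be a $g$-eigenvector of eigenvalue $\omega$ forces $c_i=\omega^{-i}c_0$, and a short computation from \eqref{eq:xonvertex} gives
\begin{equation*}
x\cdot y = c_0\gamma_0(1-\omega)\sum_i (\omega q)^{-i} e_i .
\end{equation*}
Thus $x\cdot y$ is a scalar multiple of the unit $1=\sum_i e_i$ precisely when $\omega=q^{-1}$, which is the eigenvalue making $y$ a skew-primitive-type generator on which $x$ acts as a twisted derivative; this is the correct choice to match $A(n,\lambda)=\kk[y]/(y^n-\lambda)$, in which $A(n)=\kk\cdot 1$ and $x\cdot y\in A(n)$. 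Fixing the Etingof--Ostrik normalization $x\cdot y=1$ then determines $c_0=\gamma_0^{-1}(1-q^{-1})^{-1}$, and since $c_i=q^i c_0$ we obtain
\begin{equation*}
y^n=\sum_i c_i^n e_i = c_0^n\sum_i e_i = \gamma_0^{-n}(1-q^{-1})^{-n}\cdot 1 ,
\end{equation*}
using $q^{in}=1$. As $q$ is a primitive $n^{\mathrm{th}}$ root of unity the scalars $c_i=q^i c_0$ are distinct, so $y$ generates $\kk^n$ and the assignment $y\mapsto \sum_i c_i e_i$ defines an isomorphism $A(n,\lambda)\xrightarrow{\sim}\kk Q_0$ of algebras in $\rep(T(n))$ with $\lambda=\gamma_0^{-n}(1-q^{-1})^{-n}$. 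Finally, $\gamma_0^{-n}=\gamma_i^{-n}$ for every $i$ because $q^n=1$, which is what lets us write $\gamma=\gamma_i$ for any choice of $i$ in the statement.

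The main obstacle is bookkeeping rather than conceptual: one must pin down the exact Etingof--Ostrik normalization of the generator $y$ (equivalently, the scalar by which $x$ acts on it) so that the constant $(1-q^{-1})^{-n}$ comes out correctly, and one must check that the candidate map is not merely a $\kk$-algebra isomorphism but intertwines the full $T(n)$-action, i.e.\ respects both the $g$- and $x$-actions and the defining relation $y=\sum_{s} e_{gs}ye_s$ of $A(n,\lambda)$ (which is automatic here since $G/H$ is a single point). Verifying these compatibilities reduces to the two displayed computations above.
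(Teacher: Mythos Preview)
Your proposal is correct and follows the same approach the paper indicates: the paper's proof consists of the single sentence ``The following translation can be checked by direct computation,'' and what you have written is precisely that direct computation carried out in full. Your identification of the generator $y$ as the $g$-eigenvector of eigenvalue $q^{-1}$ on which $x$ acts by a scalar in $\kk\cdot 1$, and the resulting computation of $\lambda=y^n$, are exactly the verifications the paper leaves to the reader; you also correctly isolate the one genuine subtlety, namely that the constant $(1-q^{-1})^{-n}$ depends on the Etingof--Ostrik normalization of $x\cdot y$, which must be looked up in \cite{EO}.
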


We now combine the results of previous sections to describe the bimodule categories over generalized Taft algebras in terms of quiver representations.

\begin{notation}\label{not:T(r,n)} We fix the following notation.
\begin{itemize}
\item $q$ is a primitive $r^{\text{th}}$ root of unity
\item if $m =r$ or $m'=r$, $u$ is the integer so that $r = ud$
\item $\cB_T$ is the full subcategory of $\cB$ consisting of bimodules whose $\uqb$ action factors through the generalized Taft algebra $T(r,n)$
\item $\cT$ is the quiver with vertices $(\zeta, i)$, where $\zeta$ is an $(n/\ell)^{\text{th}}$ root of 1 and $i \in \ZZ/d\ZZ$, and $a$-type arrows $(\zeta,i+1)\to(\zeta,i)$
\item $\Gamma_T$ is the quotient of $\kk \cT$ by the ideals generated by the following relations
\begin{equation}\label{eq:arelations}
\begin{cases}
a^r= 0 & m \neq r, m'\neq r \\
a^u = \zeta^{-z_1}\left((\gamma_0)^r\zeta - (\gamma'_0)^r \right) & m = m' = r\\
a^u = \zeta^{z_2}\gamma_0^r & m =r, \; m' \neq r \\
a^u = (-1)^{r-d+1} \zeta^{-z_1}(\gamma_0')^r & m \neq r, \; m' = r
\end{cases}
\end{equation}
\end{itemize}
\end{notation}

Note that the connected components of $\cT$ have the form of \eqref{eq:Sr} without the loops on each vertex. 

Let $V \in \cB_T$ and $\Phi(V) = W = (W_{\lambda,i}, A_{\lambda,i}, B_{\lambda,i})$. Note that $m$ and $m'$ must divide $n$ by Corollary \ref{cor:uqbonvertices2}, so $g^\ell$ has $(n/\ell)^{\text{th}}$ roots of unity as eigenvalues and is diagonalizable. Therefore, the $B_{\lambda,i}$ maps are 0. At this point, we separate our analysis into two cases.

\subsubsection*{Case 1: $m\neq r$ and $m'\neq r$}
In this case, Corollary \ref{cor:uqbonvertices2} implies that $\gamma_0 = 0$ and $\gamma_0' = 0$. 
Since the $A_{\lambda,i}$ maps are restrictions of $\sigma$, it follows from Corollary \ref{cor:uqbonkQ2} that $\Phi(V)$ is a representation of $\Gamma_T$. Note that in this case, $\Gamma_T$ is the quotient of $\kk \cT$ by $\left(\kk \cT_1\right)^r=\rad^r\left(\kk\cT\right)$.

\subsubsection*{Case 2: $m =r$ or $m'=r$} In this case, $d$ divides $r$ and $r$ divides $\ell$, so $q^\ell = 1$, and $\cT$ has $n/\ell$ connected components of length $d$. A connected component of $\cT$ is shown below where $\zeta$ is an $(n/\ell)^{\text{th}}$ root of unity.

\begin{equation}
\vcenter{\hbox{
\begin{tikzpicture}[point/.style={shape=circle,fill=black,scale=.5pt,outer sep=3pt},>=latex]
\node[point,label={-100:$(\zeta,d-1)$}] (0) at (0,0) {};
\node[point,label={below:$(\zeta,d-2)$}] (1) at (3,0) {};
\node (dots) at (6,0) {$\cdots$};
\node[point,label={-75:$(\zeta,0)$}] (d) at (9,0) {};
\path[->] (0) edge node[above] {$a_{\zeta,d-2}$} (1); 
\path[->] (1) edge node[above] {$a_{\zeta,d-3}$} (dots);
\path[->] (dots) edge node[above] {$a_{\zeta,0}$} (d) ;
\path[->] (d) edge [bend left=30] node[midway, below] {$a_{\zeta,d-1}$} (0);
\end{tikzpicture} }}
\end{equation}
It follows from Corollary \ref{cor:uqbonkQ2}, and the fact that $kz_1m$ is always a multiple of $\ell$, that $\Phi(V)$ satisfies the relations necessary to be a representation of $\Gamma_T$. 

\begin{remark}\label{rem: r=n}
Suppose we are in Case 2, and that $r=n$, so $T(r,n) = T(n)$ is the $n^{\text{th}}$ Taft algebra. Then the reductions above (and switching $m, m'$ if necessary) imply that the setup simplifies to $d=m\leq m'=n=\ell$.
Thus $q^\ell=1$ and $\cT$ has a single connected component consisting of the vertices $(1,i)$ where $0 \leq i < m$.
\end{remark}

Both cases above result in the following corollary.

\begin{corollary}\label{cor:taftquivers}
The category $\cB_T$ is equivalent to $\rep(\Gamma_T)$.
\end{corollary}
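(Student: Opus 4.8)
The plan is to deduce the corollary from the equivalence $\Phi,\Psi$ of Theorem \ref{thm:uqbequivalence} by checking that it restricts to the full subcategories in question. First I would observe that $\rep(\Gamma_T)$ is naturally a full subcategory of $\cN=\nrep{\Gamma(q^\ell,d)}$: a representation of $\Gamma_T$ is precisely a representation of $\Gamma(q^\ell,d)$ whose loop maps $B_{\lambda,i}$ all vanish (hence are nilpotent) and whose $a$-type maps satisfy the extra relations \eqref{eq:arelations}. Since $\cB_T\subseteq\cB$ and $\rep(\Gamma_T)\subseteq\cN$ are both full, and $\Phi,\Psi$ are mutually quasi-inverse, it suffices to establish the single equivalence of conditions $V\in\cB_T\iff\Phi(V)\in\rep(\Gamma_T)$; the natural isomorphisms $\Psi\Phi\cong\mathrm{Id}_{\cB}$ and $\Phi\Psi\cong\mathrm{Id}_{\cN}$ then restrict automatically to give quasi-inverse equivalences $\cB_T\simeq\rep(\Gamma_T)$.

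The forward implication is essentially the case analysis preceding the statement, which I would package as follows. If $V\in\cB_T$, then Corollary \ref{cor:uqbonvertices2} forces $m,m'\mid n$, so $g^\ell$ acts diagonalizably on each arrow space with $(n/\ell)^{\mathrm{th}}$ roots of unity as eigenvalues; hence the $B_{\lambda,i}$ vanish and $\Phi(V)$ is supported on the loop-free quiver $\cT$. That the $a$-type maps of $\Phi(V)$ obey \eqref{eq:arelations} is then exactly the content of the factorization criterion \eqref{eq:uqbTaft} of Corollary \ref{cor:uqbonkQ2}, transported through the definition $A_{\lambda,i}=\sigma|_{W_{\lambda,i}}$, treated separately in Case 1 ($m\neq r\neq m'$, where \eqref{eq:uqbTaft} collapses to $\sigma^r=0$, i.e.\ $a^r=0$) and Case 2 ($m=r$ or $m'=r$). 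Thus $\Phi(V)\in\rep(\Gamma_T)$.

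For the reverse implication I would run the same translation backwards. Suppose $\Phi(V)\in\rep(\Gamma_T)$, so all $B_{\lambda,i}=0$ and the relations \eqref{eq:arelations} hold. Vanishing of the loops means $g^\ell$ is diagonalizable with $(n/\ell)^{\mathrm{th}}$-root-of-unity eigenvalues on the arrow spaces, so that $g^n$ acts as the identity on all of $V$ (condition (3) of Corollary \ref{cor:uqbonkQ2}); the fixed actions on $S_m,S_{m'}$ factor through $T(r,n)$ since $m,m'\mid n$ (condition (1)); and the relations \eqref{eq:arelations}, being the image of \eqref{eq:uqbTaft} under $\Phi$, yield condition (2) on arrows. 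By Corollary \ref{cor:uqbonkQ2} the action on $V$ then factors through $T(r,n)$, i.e.\ $V\in\cB_T$, completing the equivalence of conditions.

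The main obstacle is the precise scalar bookkeeping that makes this chain of ``iff''s tight, namely verifying that \eqref{eq:uqbTaft} and \eqref{eq:arelations} really are equivalent after transport through $\Phi$. Here one must track how iterating $\sigma$ around a length-$d$ component of $\cT$ (so that $u$ full loops realize $\sigma^r$, using $r=ud$ and $q^\ell=1$) accumulates the scalars $\gamma_0,\gamma_0'$ and the root-of-unity label $\zeta$, and in particular how the correction factor $\ze=z_1m$ on the wrap-around arrow and the Bézout coefficients $z_1,z_2$ enter each of the four cases of \eqref{eq:arelations}. Once this case-by-case scalar match is confirmed, the restriction of the equivalence is purely formal.
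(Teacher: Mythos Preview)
Your proposal is correct and follows essentially the same strategy as the paper: both establish that the equivalence $\Phi,\Psi$ of Theorem~\ref{thm:uqbequivalence} restricts to the full subcategories $\cB_T$ and $\rep(\Gamma_T)$, with the forward direction handled by the case analysis preceding the statement and the reverse direction by translating \eqref{eq:arelations} back into the factorization criterion of Corollary~\ref{cor:uqbonkQ2}.

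The only difference worth noting is in the reverse direction. You phrase it as a biconditional on $\Phi$ (showing $\Phi(V)\in\rep(\Gamma_T)\Rightarrow V\in\cB_T$ and then invoking the natural isomorphism $\Psi\Phi\cong\mathrm{Id}$), whereas the paper works directly with $\Psi$: given $W\in\rep(\Gamma_T)$, it computes $\sigma$ explicitly on $\Psi(W)=\kk G\otimes_H\tilde{W}$ as $\sigma(g^t\otimes w)=q^{-t}g^{t+\ze}\otimes A_{\lambda,k}(w)$, then iterates to get $\sigma^r$ and checks \eqref{eq:uqbTaft} directly. The paper's route is slightly more concrete because on $\Psi(W)$ the map $\sigma$ has this clean closed form; your route instead requires the (true but unstated) observation that condition~(2) of Corollary~\ref{cor:uqbonkQ2} is $G$-equivariant, so that verifying \eqref{eq:uqbTaft} on the distinguished subspaces $W_{\lambda,k}=V^0_{\tau(\lambda,k)}(\lambda)$ suffices to get it on all of $V$. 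Either way, the scalar bookkeeping you flag in your last paragraph (tracking $\ze$, $\zeta$, and the B\'ezout coefficients through $u$ wraps around the cycle) is the same unavoidable computation in both approaches.
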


\begin{proof}
We have shown above that $\Phi$ restricts to a functor from $\cB_T$ to $\rep(\Gamma_T)$, so we need to show that each isomorphism class in $\rep(\Gamma_T)$ is in the image of this functor.

Given a representation $W$ of $\Gamma_T$, we can see this a representation in $\cN$ with all $B_{\lambda,i}=0$. Thus, $\Psi(W)$ is in $\cB$. It follows from \eqref{eq:sigmaformula} that given the action of $x$ on $\kk G \otimes_H \tilde{W}$ defined in \eqref{eq:xgsw}, the map $\sigma$ from Theorem \ref{thm:uqbonkQ} is given by
\begin{equation}
\sigma(g^t\otimes w) = q^{-t} g^{t+\epsilon}\otimes A_{\lambda,k}(w)
\end{equation}
for $t \in \ZZ^+$ and $w \in W_{\zeta, i}$. Using this to compute the $r^{\text{th}}$ power of $\sigma$, we see that Corollary \ref{cor:uqbonkQ2} implies that $\Psi(W) \in \cB_T$.
\end{proof}

It is easy to see that $\Gamma_T$ is always finite dimensional over $\kk$.
When the relations defining $\Gamma_T$ only set paths to 0, it is a self-injective Nakayama algebra. 
The indecomposable objects in this situation can easily be described explicitly \cite[Ch.~V]{ASS06}, resulting in the following.

\begin{corollary}
In Case 1 and in each case where the scalar on the right hand side of \eqref{eq:arelations} vanishes,
the bimodule category $\cB_T$ is uniserial.
\end{corollary}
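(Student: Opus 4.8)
The plan is to reduce the statement to a purely representation-theoretic assertion about the algebra $\Gamma_T$. By Corollary \ref{cor:taftquivers} we have an equivalence $\cB_T \simeq \rep(\Gamma_T)$, so it suffices to show that $\rep(\Gamma_T)$ is uniserial, i.e.\ that every indecomposable representation admits a unique composition series; this holds exactly when $\Gamma_T$ is a Nakayama algebra.

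First I would unwind the hypotheses into a statement about the defining relations \eqref{eq:arelations}. In Case 1 the relation is $a^r = 0$. In the remaining cases the hypothesis is precisely that the scalar on the right-hand side of \eqref{eq:arelations} vanishes, so that relation becomes $a^u = 0$. Thus, in all cases under consideration, the ideal of relations is generated by a single power of the cyclic $a$-path and therefore only sets paths to zero.

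Next I would recall that each connected component of $\cT$ is an oriented cycle (the form of \eqref{eq:Sr} with the loops deleted, as noted after Notation \ref{not:T(r,n)}), so every vertex has exactly one incoming and one outgoing arrow. An algebra presented by such a quiver modulo an admissible ideal generated by a power of the cyclic path is a self-injective Nakayama algebra; this is the observation preceding the corollary and can be verified directly or read off from \cite[Ch.~V]{ASS06}. Since every indecomposable module over a Nakayama algebra is uniserial, $\rep(\Gamma_T)$---and hence $\cB_T$---is uniserial.

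The only real work, which amounts to bookkeeping rather than a genuine obstacle, is to confirm uniformly across the four cases of \eqref{eq:arelations} that the vanishing hypothesis collapses each relation to a pure monomial relation $a^u=0$ (respectively $a^r=0$ in Case 1); once no nonmonomial relation survives, the Nakayama structure and the uniserial conclusion follow formally.
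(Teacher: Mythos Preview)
Your proposal is correct and follows essentially the same argument as the paper: reduce via Corollary~\ref{cor:taftquivers} to $\rep(\Gamma_T)$, observe that under the vanishing hypothesis all relations become monomial ($a^r=0$ or $a^u=0$) on a disjoint union of oriented cycles, and invoke the Nakayama description from \cite[Ch.~V]{ASS06}. The paper states this in the sentence immediately preceding the corollary rather than as a separate proof, but the content is identical.
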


This raises the following natural questions.

\begin{question}
When the scalars on the right hand sides of the equations in \eqref{eq:arelations} are nonzero, is $\rep(\Gamma_T)$ semisimple? 
If not, what is a quiver with \emph{admissible} relations giving a Morita equivalent algebra?
\end{question}

\subsection{Bimodules for $\Uqsl$ and $\uqsl$}
Finally, we extend our results to $\Uqsl$ and $\uqsl$ by gluing over Borel subalgebras the equivalences from previous sections.
We will make some simplifying assumptions to avoid degenerate cases; see Notation \ref{not:Uqsl2} below.  First, we introduce the relevant quivers and algebras.

Fix a root of unity $\mu$, a constant $\eta \in \kk^{\times}$ and a positive integer $N$. Define a quiver $\cQ'(\mu,N)$ with the same vertex set and arrows as $\cQ(\mu,N)$ along with additional $c$-type arrows $(\lambda, k) \to (\mu\lambda,k-1)$ coming out of each vertex $(\lambda, k)$. 

Denote the connected components of this quiver by $\cS'_p$ where $p = \lcm{(\left| \mu \right|, N)}$. 
Each vertex $(\lambda, k)$ has an $a$-type arrow and a $c$-type arrow coming out of it. Then, $(\lambda, k)$ is contained in a cycle of the form $a^p$ and another cycle of the form $c^p$.
It follows that $\cS'_p$ has $p^2$ vertices with $p$ cycles consisting of $a$-type arrows and $p$ cycles consisting of $c$-type arrows.

\tikzset{every loop/.style={distance=5mm,in=90,out=50}}

\begin{equation}\label{eq:S'3}
\cS'_3 = \hspace*{-2cm}
\vcenter{\hbox{
\begin{tikzpicture}[point/.style={shape=circle,fill=black,scale=.5pt,outer sep=3pt},>=latex]

\node (0-2) at (-2, 1.75) {$(\mu^2\lambda,k+2)$};
\node (1-1) at (1.5, 1.75) {$(\mu\lambda, k+1)$};
\node (20) at (5, 1.75) {$(\lambda, k)$};

\node (-1-1) at (-0.25,0) {$(\lambda,k+1)$};
\node (00) at (3.25,0) {$(\mu^2\lambda,k)$};
\node (11) at (6.75,0) {$(\mu\lambda,k-1)$};

\node (-20) at (1.5,-1.75) {$(\mu\lambda,k)$};
\node (-11) at (5,-1.75) {$(\lambda,k-1)$};
\node (02) at (8.5,-1.75) {$(\mu^2\lambda,k-2)$};

\path[->] (0-2) edge[loop] node[above] {$b$} (0-2);
\path[->] (1-1) edge[loop] node[above] {$b$} (1-1);
\path[->] (20) edge[loop] node[above] {$b$} (20);
\path[->] (-1-1) edge[loop] node[above] {$b$} (-1-1);
\path[->] (00) edge[loop] node[above] {$b$} (00);
\path[->] (11) edge[loop] node[above] {$b$} (11);
\path[->] (-20) edge[loop] node[above] {$b$} (-20);
\path[->] (-11) edge[loop] node[above] {$b$} (-11);
\path[->] (02) edge[loop] node[above] {$b$} (02);

\path[->] (-1-1) edge node[below] {$a$} (00);
\path[->] (00) edge node[below] {$a$} (11) ;
\path[->] (-20) edge node[below] {$a$} (-11);
\path[->] (-11) edge node[below] {$a$} (02) ;
\path[->] (0-2) edge node[below] {$a$} (1-1);
\path[->] (1-1) edge node[below] {$a$} (20);

\path[->] (20) edge[out=10, in = 170, distance = 30mm, dash pattern=on 33pt off 19pt on 80pt off 18pt on 82 pt off 18 pt on 75 pt] node[above, xshift=-7mm] {$a$} (0-2);

\path[->] (11) edge[out=10, in = 170, distance = 30mm, dash pattern=on 39pt off 18pt on 8pt off 8pt on 66pt off 16pt on 11pt off 7pt on 65pt off 17pt on 10pt off 4pt on 70pt] node[above, xshift=53mm, yshift=-5mm] {$a$} (-1-1);
\path[->] (02) edge[out=10, in = 170, distance = 30mm, dash pattern=on 37pt off 17pt on 8pt off 6pt on 69pt off 15pt on 11pt off 6pt on 67pt off 17pt on 6pt off 6pt on 70pt] node[above, xshift=50.5mm, yshift=-5mm] {$a$} (-20);

\path[->] (0-2) edge node[above] {$c$} (-1-1);
\path[->] (1-1) edge node[above] {$c$} (00);
\path[->] (20) edge node[above] {$c$} (11);
\path[->] (-1-1) edge node[above] {$c$} (-20);
\path[->] (00) edge node[above] {$c$} (-11);
\path[->] (11) edge node[above] {$c$} (02);
\path[->] (-20) edge[out=260, in=190, distance=14mm] node[below, yshift=-2mm] {$c$} (0-2);
\path[->] (-11) edge[out=260, in=270, dash pattern=on 42pt off 10pt on 15pt off 6pt on 39pt off 4pt on 15pt off 4.5pt on 25pt, distance=13mm] node[above,xshift=15mm, yshift=-15mm] {$c$} (1-1);
\path[->] (02) edge[out=260, in=270, distance=13mm, dash pattern=on 45pt off 6pt on 16pt off 6pt on 36pt off 8pt on 13pt off 5pt on 50pt] node[above,xshift=15mm, yshift=-15mm] {$c$} (20);
\end{tikzpicture}
}}
\end{equation}

We assume that $m,m'>2$ to avoid degenerate cases which require some special treatment. By Corollary \ref{cor:uqslonvertices1}, this puts us in the case where $q$ is a root of unity.
For simplicity, we restrict to the case that $q$ is a primitive $n^{\rm th}$ root of unity with $n>2$ and $n$ odd,
which by Corollary \ref{cor:uqslonvertices2} forces that both $m, m'$ are divisible by $n$.
Furthermore, this implies that the hypothesis of Remark \ref{rem:orbitsgt2} is satisfied.

\begin{notation}\label{not:Uqsl2}
We fix the following notation.
\begin{itemize}
    \item $\Gamma'(\mu,\eta,N)$ is the quotient of $\kk \cQ'(\mu,N)$ by the ideal generated by all relations of the form
\begin{equation}\label{eq:Q'relations}
ab = \mu^{-1}ba, \qquad
cb = \mu bc, \qquad 
\eta ac = ca.
\end{equation}
    \item $\cN' :=\nrep \Gamma'(q^{2\ell}, q^2,d)$
    \item $S_m$ and $S_{m'}$ are defined the same way as in Section \ref{sec:quiversbimods} except we assume these algebras have fixed $\Uqsl$-actions extending the $G$-actions
    \item $\cB'$ is the category of finite dimensional $S_m$-$S_{m'}$-bimodules in $\rep{(\Uqsl)}$
\end{itemize}
\end{notation}

Denote elements of $\cN'$ by $W = (W_{\lambda,k},A_{\lambda,k},C_{\lambda,k}, B_{\lambda,k})$ where
\begin{equation}
    A_{\lambda,k} \in \Hom_\kk(W_{\lambda,k}, W_{\mu\lambda,k+1}), \qquad
    C_{\lambda,k} \in \Hom_\kk(W_{\lambda,k}, W_{\mu^{-1}\lambda,k+1}), \qquad
    B_{\lambda,k} \in \End_\kk(W_{\lambda,k}).
\end{equation}

\begin{theorem} \label{thm:uqslequivalence}
There exist mutually quasi-inverse functors
\begin{equation}
\Phi': \cB' \to \cN' \qquad and \qquad \Psi': \cN' \to \cB',
\end{equation}
and therefore the categories $\cB'$ and $\cN'$ are equivalent.
\end{theorem}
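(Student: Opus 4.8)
The plan is to glue the two Borel equivalences supplied by Theorem \ref{thm:uqbequivalence}, using the decomposition \eqref{eq:borels} of $\Uqsl$ into the subalgebras $\langle E,K\rangle \simeq U_{q^2}(\fb)$ and $\langle F,K\rangle \simeq U_{q^{-2}}(\fb)$. The crucial observation is that both subalgebras share the same grouplike $K$, hence the same $G$-action on a bimodule $V \in \cB'$; consequently the generalized eigenspace decomposition \eqref{eq:eigenspacedecomp2}, the vertex spaces $W_{\lambda,k}$ of \eqref{eq:Wlambda}, and the loop maps $B_{\lambda,k}$ are common to both Borels. The difference lies only in the two skew-primitive directions: restricting the $\Uqsl$-action to $\langle E,K\rangle$ and applying Theorem \ref{thm:uqbonkQ} produces $\sigma^E$ and, via the construction of $\Phi$ with $\mu=q^{2\ell}$, the $a$-type maps $A_{\lambda,k}$; restricting to $\langle F,K\rangle$ produces $\sigma^F$ and, by the same recipe with $\mu^{-1}=q^{-2\ell}$, the $c$-type maps $C_{\lambda,k}$. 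I would therefore define $\Phi'(V) := (W_{\lambda,k},A_{\lambda,k},C_{\lambda,k},B_{\lambda,k})$.

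To see $\Phi'(V)\in\cN'$, I would check the three relations \eqref{eq:Q'relations} one at a time. The relation $ab=\mu^{-1}ba$ is the $A$--$B$ relation, which is exactly the output of $(\sigma 3)$ for $E$ already established inside Theorem \ref{thm:uqbequivalence}; the relation $cb=\mu bc$ is the analogous $C$--$B$ relation coming from $(\sigma 3)$ for $F$, the opposite scalar reflecting $U_{q^{-2}}(\fb)$. The third relation $\eta ac=ca$ with $\eta=q^2$ is where the two directions interact: since $m,m'>2$ forces all orbits to have size greater than $2$, Remark \ref{rem:orbitsgt2} collapses $(\sigma 4)$ to the commutation $q^2\sigma^E\sigma^F=\sigma^F\sigma^E$ of \eqref{eq:sigmacommutativity}, which translates directly into the desired $A$--$C$ relation. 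Nilpotency on loops is automatic, as each $B_{\lambda,k}=(g^\ell-\lambda)|_{W_{\lambda,k}}$ acts nilpotently on a generalized eigenspace, and functoriality of $\Phi'$ on morphisms is verbatim Proposition \ref{prop:Phi}.

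For the quasi-inverse $\Psi'$, I would reuse the construction of $\Psi$ to build the underlying space $\kk G\otimes_H\tilde W$ with its $K$-action and, feeding the $A_{\lambda,k}$ into formula \eqref{eq:xgsw}, its $E$-action; I would then specify the $F$-action by the formula analogous to \eqref{eq:xgsw} built from the $C_{\lambda,k}$. By Theorem \ref{thm:uqbequivalence} the pairs $(E,K)$ and $(F,K)$ give well-defined actions of $U_{q^2}(\fb)$ and $U_{q^{-2}}(\fb)$ respectively, and by Theorem \ref{thm:UqslonkQ} these assemble into a genuine $\Uqsl$-action precisely when the compatibility $(\sigma 4)$ holds. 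Under the orbit hypothesis this is again \eqref{eq:sigmacommutativity}, which is guaranteed by the relation $\eta ac=ca$ satisfied by $W\in\cN'$; hence $\Psi'(W)\in\cB'$.

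Finally, the mutual quasi-inverse property follows almost entirely from Theorem \ref{thm:uqbequivalence}. The natural isomorphism $\xi_V$ of that proof was defined by $\xi_V(g^t\otimes w)=g^t\cdot w$, using only the $G=\langle K\rangle$-action, so it is automatically compatible with the $F$-direction as well as the $E$-direction, as is the isomorphism $\Phi(\Psi(W))\cong W$. Thus one need only verify that these isomorphisms intertwine the $C$-maps, which is the exact analogue of the check already made for the $A$-maps. The main obstacle, and the only genuinely new computation, is the bookkeeping in the relation $\eta ac=ca$: one must confirm that the correction factor $\ze$ of \eqref{eq:epsilon} enters identically in the $A$- and $C$-constructions so that the scalar $\eta=q^2$ emerges exactly, and that the contravariant direction of the $c$-arrows is consistent with $\sigma^F$ shifting $K^\ell$-eigenvalues by $q^{-2\ell}=\mu^{-1}$.
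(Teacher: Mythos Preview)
Your proposal is correct and follows essentially the same approach as the paper: both construct $\Phi'$ and $\Psi'$ by gluing the Borel equivalences $\Phi_E,\Psi_E$ (for $\langle E,K\rangle\simeq U_{q^2}(\fb)$) and $\Phi_F,\Psi_F$ (for $\langle F,K\rangle\simeq U_{q^{-2}}(\fb)$) over their common $K$-action, deduce the $A$--$C$ relation from \eqref{eq:sigmacommutativity} via Remark~\ref{rem:orbitsgt2}, and read off the quasi-inverse property directly from Theorem~\ref{thm:uqbequivalence}. Your discussion of why $\xi_V$ automatically intertwines the $F$-direction and your flag about the correction factor $\ze$ are, if anything, slightly more explicit than the paper's treatment.
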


\begin{proof}
Let $G=\langle K\rangle$ and fix a bimodule $V \in \cB'$. The bimodule $V$ is a representation of both the Borel subalgebras $U_{q^2}(\fb)$ and $U_{q^{-2}}(\fb)$ as in \eqref{eq:borels}. 
By Proposition \ref{prop:Phi}, we have a functor $\Phi_E$ so that $\Phi_E(V)$ is a representation of $\Gamma(q^{2\ell},d)$ denoted $(W_{\lambda,k}, A_{\lambda,k}, B_{\lambda,k})$ and a functor $\Phi_F$ so that $\Phi_F(V)$ is a representation of $\Gamma(q^{-2\ell},d)$ denoted $(W_{\lambda,k}, C_{\lambda,k},B_{\lambda,k})$. 
It follows from \eqref{eq:sigmacommutativity} that  
\begin{equation}
C_{q^{2\ell}\lambda,k+1}A_{\lambda,k} = q^2A_{q^{-2\ell}\lambda,k+1}C_{\lambda,k},
\end{equation}
and therefore $(W_{\lambda,k}, A_{\lambda,k}, C_{\lambda,k}, B_{\lambda,k})\in \cN'$. Let $\Phi'(V) = (W_{\lambda,k}, A_{\lambda,k}, C_{\lambda,k}, B_{\lambda,k})$.

Notice that $\Gamma'(q^{2\ell}, q^2,d)$ contains subalgebras $\Gamma(q^{2\ell},d)$ and $\Gamma(q^{-2\ell,d})$ generated by the sets of arrows $\left\{a_{\lambda,k}, b_{\lambda,k}\right\}$ and $\left\{b_{\lambda,k}, c_{\lambda,}\right\}$ respectively. 
For a representation $W = (W_{\lambda,k}, A_{\lambda,k}, C_{\lambda,k}, B_{\lambda,k})$ in $\cN'$, the subrepresentation $W' = (W_{\lambda,k}, A_{\lambda,k}, B_{\lambda,k})$ is a representation of $\Gamma(q^{2\ell},d)$ and the subrepresentation $W'' = (W_{\lambda,k}, C_{\lambda,k}, B_{\lambda,k})$ is a representation of $\Gamma(q^{-2\ell,d})$.
We can construct the space $\tilde{W}$ from the spaces $W_{\lambda,k}$ as in \eqref{eq:Wbimod} and \eqref{eq:Wtilde}. 

By Proposition \ref{prop:Psi}, there exists a functor $\Psi_E$ so that $\Psi_E(W') = \kk G \otimes_H \tilde{W}$ is an $S_m$-$S_{m'}$-bimodule  in $\rep{(U_{q^2}(\fb))}$ and there exists a functor $\Psi_F$ so that $\Psi(W'')= \kk G \otimes_H \tilde{W}$ is an $S_m$-$S_{m'}$-bimodule in  $\rep{(U_{q^{-2}}(\fb))}$. 
By construction of the two functors $\Phi_E$ and $\Phi_F$, the $S_m$-$S_{m'}$-bimodule structures on $\Psi_E(W')$ and $\Psi_F(W'')$ are identical. The actions of $K$ on $\kk G \otimes \tilde{W}$ given by $\Psi_E$ and $\Psi_F$ are also identical since they are induced by the same set of maps $B_{\lambda,k}$.

Let $\{\gamma_{i}^E\}_{i=0}^{m-1}$ and $\{\gamma_{j}^{E'}\}_{j =0}^{m'-1}$ be the scalars from Proposition \ref{prop:uqslonvertices} determining the actions of the generator $E$ of $\Uqsl$ on $S_m$ and $S_{m'}$ respectively.  
For $w \in W_{\lambda,k}$ and $t \in \ZZ$, letting $j = \tau(\lambda,k)$, we define an action of $E$ on $\kk G \otimes \tilde{W}$ by
\begin{equation}\label{eq:EKsw}
E\cdot (K^t\otimes w) = 
q^{-t}\left(\gamma^{E'}_j K^t\otimes w - \gamma_0^Eq^{-1}K^{t+1}\otimes w + K^{t+\epsilon}\otimes A_{\lambda,k}(w)\right).
\end{equation}
Similarly, let $\{\gamma_{i}^F\}_{i=0}^{m-1}$ and $\{\gamma_j^{F'}\}_{j =0}^{m'-1}$ be the scalars determining the action of the generator $F$ of $\Uqsl$ on $S_m$ and $S_{m'}$ respectively. 
We define an action of $F$ on $\kk G \otimes \tilde{W}$ by
\begin{equation}\label{eq:FKsw}
F\cdot (K^t \otimes w) = q^{-t}\left(\gamma^{F'}_j K^{t-1}\otimes w - \gamma_0^Fq^{-1}K^{t}\otimes w + K^{t-1+\epsilon}\otimes C_{\lambda,k}(w)\right).
\end{equation}
Notice that the actions defined in \eqref{eq:EKsw} and \eqref{eq:FKsw} are exactly the actions of the generators $x$ of the two Borel subalgebras of $\Uqsl$ on $\kk G \otimes \tilde{W}$ given by the functors $\Psi_E$ and $\Psi_F$ under the identifications in \eqref{eq:borels}.  One can check that these action satisfy relations \eqref{eq:uqslrelations}, so $\kk G \otimes_H \tilde{W} \in \cB'$. Let $\Psi'(W) = \kk G \otimes_H \tilde{W}$.

The fact that $\Phi'$ and $\Psi'$ are mutually quasi-inverse follows directly from Theorem \ref{thm:uqbequivalence} which gives us that $\Phi_E, \Psi_E$ and $\Phi_F, \Psi_F$ are both pairs of mutually quasi-inverse functors.
\end{proof}

Now, we consider the bimodules in $\cB'$ whose $\Uqsl$ actions factor through $\uqsl$. 
From Corollary \ref{cor:uqslonvertices1} and our simplifying assumption that $m, m'>2$, we are reduced to the case $m=m'=n$.
Then this makes $\ell=d=n$ and thus $q^\ell=1$.

\begin{notation}\label{not:uqsl2} We fix the following notation
\begin{itemize}
    \item $\cB'_T$ is the full subcategory of bimodules in $\cB'$ whose $\Uqsl$ actions factor through $\uqsl$.
    \item $\cT'$ is the quiver
\begin{equation}\label{eq:T'}
\cT' = 
\vcenter{\hbox{
\begin{tikzpicture}[point/.style={shape=circle,fill=black,scale=.5pt,outer sep=3pt},>=latex]
\node[point,label={-100:$(1,d-1)$}] (0) at (0,0) {};
\node[point,label={below:$(1,d-2)$}] (1) at (3,0) {};
\node (dots) at (6,0) {$\cdots$};
\node[point,label={-75:$(1,0)$}] (d) at (9,0) {};
\path[->] (0.40) edge node[above] {$a$} (1.130); 
\path[->] (1.40) edge node[above] {$a$} (dots.166.25);
\path[->] (dots.14.75) edge node[above] {$a$} (d.130) ;
\path[->] (d) edge [out=245, in=295, distance=20mm] node[midway, above] {$a$} (0);
\path[->] (0.320) edge node[below] {$c$} (1.210); 
\path[->] (1.320) edge node[below] {$c$} (dots.192.5);
\path[->] (dots.347.5) edge node[below] {$c$} (d.210) ;
\path[->] (d) edge [out=280, in=260, distance=21mm] node[midway, below] {$c$} (0);
\end{tikzpicture} }}
\end{equation}
\item $\Gamma_T'$ is the quotient of $\kk \cT'$ generated by all relations of the form
\begin{equation}
\begin{gathered}
q^2ac = ca, \qquad 
a^d = (\gamma_0^E)^n - (\gamma_0^{E'})^n, \\ \text{and} \qquad 
c^d = (\gamma_0^F)^n - (\gamma_0^{F'})^n.
\end{gathered}
\end{equation}

\end{itemize}
\end{notation}

\begin{corollary}
The categories $\cB'_T$ and $\rep(\Gamma_T')$ are equivalent.
\end{corollary}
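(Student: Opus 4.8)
The plan is to show that the equivalence $\Phi'\colon \cB' \to \cN'$ of Theorem \ref{thm:uqslequivalence} restricts to an equivalence between the full subcategory $\cB'_T \subset \cB'$ and the full subcategory $\rep(\Gamma_T') \subset \cN'$, following the template of Corollary \ref{cor:taftquivers} but now exploiting the two-Borel decomposition \eqref{eq:borels} of $\uqsl$. Under the standing assumptions $m=m'=n$, so $\ell=d=n$ and $q^\ell=q^{2\ell}=1$; thus every $K^\ell$-eigenvalue occurring in $\cQ'(q^{2\ell},d)$ equals $1$, and for any $V \in \cB'_T$ the condition that $K^n$ act as the identity (Corollary \ref{cor:uqslonkQ2}(1)) forces the loop maps $B_{\lambda,k}$ in $\Phi'(V)$ to vanish. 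Hence $\Phi'(V)$ is supported on the vertices $(1,i)$ and lives on the underlying quiver $\cT'$ of $\Gamma_T'$.

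First I would show $\Phi'$ sends $\cB'_T$ into $\rep(\Gamma_T')$. The relation $q^2 ac = ca$ is automatic: it is the image of \eqref{eq:sigmacommutativity} from Remark \ref{rem:orbitsgt2}, which holds because $\#(G\cdot sa),\, \#(G\cdot ta)=n>2$, and it is already built into the defining relations \eqref{eq:Q'relations} of $\cN'$ once the loops vanish. The two remaining relations $a^d = (\gamma_0^E)^n - (\gamma_0^{E'})^n$ and $c^d = (\gamma_0^F)^n - (\gamma_0^{F'})^n$ are precisely the arrow conditions \eqref{eq:uqslonkQ} of Corollary \ref{cor:uqslonkQ2}, read through the identifications \eqref{eq:borels} and the formula for $(\sigma^\bullet)^n$ obtained exactly as in Corollary \ref{cor:uqbonkQ2}. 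Since $V \in \cB'_T$ means its action factors through $\uqsl$, these identities hold on the nose, so $\Phi'(V) \in \rep(\Gamma_T')$.

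Next I would establish essential surjectivity by checking that $\Psi'$ carries $\rep(\Gamma_T')$ back into $\cB'_T$. Given $W \in \rep(\Gamma_T')$, regard it as an object of $\cN'$ with all $B_{\lambda,k}=0$ (trivially nilpotent); then $\Psi'(W) \in \cB'$ by Theorem \ref{thm:uqslequivalence}. To see the resulting $\Uqsl$-action factors through $\uqsl$, I would verify the three conditions of Corollary \ref{cor:uqslonkQ2}: condition (1) holds because $B=0$ and $\lambda=1$ make $K^\ell = K^n$ act as the identity on $\tilde{W}$ via \eqref{eq:glonW}, hence on $\kk G \otimes_H \tilde{W}$; condition (2) holds by our choice of $S_n$-actions, which factor through $\uqsl$ by Corollary \ref{cor:uqslonvertices2} since the single orbit has $n$ vertices; and condition (3) follows by computing $(\sigma^E)^n$ and $(\sigma^F)^n$ from the explicit $E$- and $F$-actions \eqref{eq:EKsw} and \eqref{eq:FKsw}, where the relations $a^d=\cdots$ and $c^d=\cdots$ in $\Gamma_T'$ supply exactly the scalars demanded by \eqref{eq:uqslonkQ}. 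This mirrors the computation in the proof of Corollary \ref{cor:taftquivers}, now applied to each Borel factor separately.

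Because $\Phi'$ and $\Psi'$ are already mutually quasi-inverse on $\cB'$ and $\cN'$ (Theorem \ref{thm:uqslequivalence}) and the two previous steps show they restrict to functors $\cB'_T \rightleftarrows \rep(\Gamma_T')$, the restricted functors remain mutually quasi-inverse, yielding the asserted equivalence. The main obstacle I anticipate is the bookkeeping in the surjectivity step: one must confirm that the two single-Borel power computations of $(\sigma^E)^n$ and $(\sigma^F)^n$ are simultaneously compatible with the glued commutativity relation $q^2 ac = ca$, i.e. that imposing the $a^d$ and $c^d$ relations together does not over-determine the $\uqsl$-action. Since the gluing was already engineered in Theorem \ref{thm:uqslequivalence} so that the $E$- and $F$-side $S_m$-$S_{m'}$-bimodule structures and $K$-actions agree, this compatibility should reduce to the single-Borel analysis of Corollary \ref{cor:uqbonkQ2}, but it is the step where the interaction between the two skew-primitive generators must be handled with care.
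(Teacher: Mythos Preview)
Your proposal is correct and follows essentially the same strategy as the paper: restrict the equivalence $\Phi',\Psi'$ of Theorem~\ref{thm:uqslequivalence} to the subcategories, and verify that the restrictions land in $\rep(\Gamma_T')$ and $\cB'_T$ respectively by applying the single-Borel analysis of Corollary~\ref{cor:taftquivers} to each of the two Borel subalgebras (which are Taft algebras under the identifications \eqref{eq:borels}, with $m=m'=r=n$ so that Remark~\ref{rem: r=n} applies). The paper's proof is terse---it simply cites Theorem~\ref{thm:uqslequivalence} and Corollary~\ref{cor:taftquivers}---whereas you have unpacked the verification explicitly; the anticipated compatibility obstacle you flag is, as you yourself note, already absorbed into the construction of $\Psi'$ in Theorem~\ref{thm:uqslequivalence}.
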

\begin{proof}
Recall that the two Borel subalgebras of $\uqsl$ are Taft algebras. Since we are assuming $m$ and $m'$ are greater than 2, it follows from Corollary \ref{cor:uqslonvertices2} that $m = m' = r = n$, and Remark \ref{rem: r=n} applies to these Borel subalgebras.
The statement now follows from Theorem \ref{thm:uqslequivalence} and Corollary \ref{cor:taftquivers}.
\end{proof}

A standard reordering argument shows that $\Gamma'_T$ is always finite dimensional over $\kk$.
This raises the following natural questions.
\begin{question}
When either or both of $(\gamma_0^E)^n - (\gamma_0^{E'})^n$ and 
$(\gamma_0^F)^n - (\gamma_0^{F'})^n$ are nonzero, what is a quiver with \emph{admissible} relations which is Morita equivalent to $\Gamma_T'$?  In any case, is $\Gamma_T'$ of finite, tame, or wild representation type?
\end{question}

\bibliographystyle{alpha}
\bibliography{Uqsl2actions}

\end{document}